\documentclass[11pt]{article}
\usepackage{mathrsfs}
\usepackage{amsfonts}
\usepackage{}
\usepackage[titletoc]{appendix}
\usepackage[leqno]{amsmath}
\usepackage{graphicx}
\usepackage{latexsym}
\usepackage{amsmath, bbm, amsfonts, amssymb, amsthm, mathrsfs, euscript, makeidx, color}
\usepackage{indentfirst}  \usepackage{bm} \usepackage{dsfont}
\usepackage{enumerate}

\oddsidemargin = 0pt \evensidemargin = 0pt \marginparwidth = 1in
\marginparsep = 0pt \leftmargin  = 1.25in \topmargin =0pt
\headheight  = 0pt \headsep  = 0pt \topskip =0pt
\footskip  =0.25in \textheight  = 9in \textwidth  =
6.5in

\newcommand{\citedef}[2]{\cite[#2]{#1}}

\def\sqr#1#2{{\vcenter{\vbox{\hrule height.#2pt
    \hbox{\vrule width.#2pt height#1pt \kern#1pt \vrule width.#2pt}
    \hrule height.#2pt}}}}

\def\3n{\negthinspace \negthinspace \negthinspace }
\def\2n{\negthinspace \negthinspace }
\def\1n{\negthinspace }
\def\bel{\begin{equation}\label}
\def\eel{\end{equation}}
\def\dbA{\mathbb{A}}

\def\dbC{\mathbb{C}}

\def\dbE{\mathbb{E}}
\def\dbF{\mathbb{F}}

\def\dbH{\mathbb{H}}

\def\dbK{\mathbb{K}}

\def\dbM{\mathbb{M}}

\def\dbP{\mathbb{P}}

\def\dbR{\mathbb{R}}
\def\dbS{\mathbb{S}}

\def\dbX{\mathbb{X}}
\def\dbY{\mathbb{Y}}
\def\dbZ{\mathbb{Z}}

\def\sA{\mathscr{A}}

\def\sC{\mathscr{C}}
\def\sD{\mathscr{D}}

\def\sJ{\mathscr{J}}

\def\sL{\mathscr{L}}
\def\sM{\mathscr{M}}

\def\sP{\mathscr{P}}
\def\sQ{\mathscr{Q}}
\def\sR{\mathscr{R}}


\def\={\buildrel \triangle \over =}

\def\ds{\displaystyle}

\def\ns{\noalign{\ss}}
%
%
\def\a{\alpha}
\def\b{\beta }
\def\g{\gamma}
\def\d{\delta}
\def\e{\varepsilon}

\def\k{\kappa}
\def\l{\lambda}

\def\si{\sigma}
\def\t{\tau}
\def\f{\varphi}
\def\th{\theta}
\def\o{\omega}

\def\i{\infty}
%
%

\def\G{\Gamma}

\def\Th{\Theta}
\def\L{\Lambda}

\def\O{\Omega}

%

%
\def\cA{{\cal A}}

\def\cF{{\cal F}}

\def\cK{{\cal K}}
\def\cL{{\cal L}}

\def\cS{{\cal S}}
\def\cT{{\cal T}}
\def\cU{{\cal U}}

\def\cX{{\cal X}}
\def\cY{{\cal Y}}
\def\cZ{{\cal Z}}

%
%

\def\BJ{{\bf J}}

\def\BX{{\bf X}}

\def\Bb{{\bf b}}

\def\Bk{{\bf k}}

\def\Bq{{\bf q}}
\def\Br{{\bf r}}

\def\Bu{{\bf u}}

\def\no{\noindent}

\def\ss{\smallskip}
\def\ms{\medskip}

\def\q{\quad}
\def\qq{\qquad}

%
%

\def\lan{\mathop{\langle}}
\def\ran{\mathop{\rangle}}

\def\esssup{\mathop{\rm esssup}}
\def\essinf{\mathop{\rm essinf}}

\def\h{\widehat}
\def\wt{\widetilde}

\def\cd{\cdot}
\def\cds{\cdots}

\def\tr{\hbox{\rm tr$\, $}}

\def\les{\leqslant}
\def\ges{\geqslant}

\def\({\Big (}
\def\){\Big )}
\def\[{\Big[}
\def\]{\Big]}
\def\bde{\begin{definition}\label}
\def\ede{\end{definition}}
\def\be{\begin{equation}}
\def\bel{\begin{equation}\label}
\def\ee{\end{equation}}
\def\bt{\begin{theorem}\label}
\def\et{\end{theorem}}
\def\bc{\begin{corollary}\label}
\def\ec{\end{corollary}}
\def\bl{\begin{lemma}\label}
\def\el{\end{lemma}}
\def\bp{\begin{proposition}\label}
\def\ep{\end{proposition}}
\def\bas{\begin{assumption}}
\def\eas{\end{assumption}}
\def\br{\begin{remark}\label}
\def\er{\end{remark}}
\def\ba{\begin{array}}
\def\ea{\end{array}}
\def\ed{\end{document}}

\def\rf{\eqref}

\def\square#1{\vbox{\hrule\hbox{\vrule height#1%
  \kern#1\vrule}\hrule}}
\def\rectangle#1#2{\vbox{\hrule\hbox{\vrule height#1%
  \kern#2\vrule}\hrule}}


\font\tenbb=msbm10 \font\sevenbb=msbm7 \font\fivebb=msbm5

\newfam\bbfam
\scriptscriptfont\bbfam=\fivebb \textfont\bbfam=\tenbb
\scriptfont\bbfam=\sevenbb



\newtheorem{theorem}{Theorem}[section]
\newtheorem{corollary}[theorem]{Corollary}

\newtheorem{lemma}[theorem]{Lemma}
\newtheorem{proposition}[theorem]{Proposition}

\theoremstyle{definition}
\newtheorem{definition}[theorem]{Definition}
\newtheorem{remark}[theorem]{Remark}

\makeatletter
 
 \@addtoreset{equation}{section}
\makeatother


\begin{document}

\title{Infinite Horizon Linear Quadratic Mean Field Problems with Common Noise and Regime Switching via Conditional McKean-Vlasov FBSDEs
\footnote{This work is supported by the National Key R\&D Program of China (No. 2023YFA1009002),  the
National Natural Science Foundation of China (No.  12371443) and the Natural Science Foundation of Jilin Province for Outstanding
Young Talents (No. 20230101365JC).}}

\author{Qingmeng Wei \footnote{School of Mathematics and Statistics, Northeast Normal University, Changchun 130024, P. R. China; {E-mail: weiqm100@nenu.edu.cn}}\qq
Yaqi Xu \footnote{School of Mathematics and Statistics, Northeast Normal University, Changchun 130024, P. R. China; {E-mail: xuyq222@nenu.edu.cn}}}

\maketitle

\begin{abstract}
This paper studies infinite horizon  linear quadratic (LQ)  mean field problems with common noise and regime switching, covering both control and game formulations.  To establish a theoretical foundation for the  LQ   framework, we first analyze fully coupled forward-backward stochastic differential equations (FBSDEs) of conditional McKean-Vlasov  type with Markovian switching and  establish its well-posedness  under a generalized domination-monotonicity condition.   Building upon this solvability result,   we then  derive  necessary and sufficient conditions for both the open-loop optimal control  in the control problem and the  mean-field Nash equilibria in the game problem.  
\end{abstract}

\ms

\no\bf Keywords: \rm
infinite horizon  FBSDEs, markovian switching, conditional McKean-Vlasov,  common noise, open-loop optimal control, mean-field Nash equilibria


\ms

\no\bf AMS Mathematics Subject Classification. \rm
93E20, 60H10, 49N10

\section{Introduction}\label{SEC_Int}

  This work investigates infinite horizon linear quadratic mean field control and game problems  that incorporate common noise and Markovian switching. Research in this  area  dates back to the pioneering contributions of Huang et al. \cite{HMC-2006}  and Lasry, Lions \cite{Lasry-Lions-2007}, which established   mean field game theory as a rigorous mathematical framework for analyzing large-scale systems of interacting agents.   
     For comprehensive overviews of the theory and applications of mean field methods, we refer to \cite{Gueant-2011, Bensoussan-Frehse-Yam-2013, CD-2018-1, CD-2018-2, Gomes-Saude-2013} and references therein. 
It is widely recognized that the theoretical framework of  mean field problems  is built on   two complementary approaches: the analytic approach and the probabilistic approach.  
The analytic approach to mean-field problems relies on  a system of coupled Hamilton-Jacobi-Bellman and Fokker-Planck equations. This framework is ultimately unified and generalized by the master equation--an infinite-dimensional partial differential equation (PDE) that captures the game's dynamics directly in the space of measures.  
%
Complementarily, the probabilistic method provides a powerful alternative for studying mean field problems, refer to  \cite{CD-2013, Ahuja-Ren-Yang-2019,   Bayraktar-Zhang-2023}, etc. It employs Pontryagin's maximum principle and  coupled forward-backward stochastic differential equations (FBSDEs)  of  McKean-Vlasov type to describe optimal control/equilibria from a stochastic viewpoint, providing a natural framework for handling complex stochastic environments.

Early literature on mean-field games  generally assumes that agents are influenced by mutually independent idiosyncratic noises, thereby excluding common noise.  
 While this simplification facilitates mathematical formulation, it limits the model's capacity to capture real-world phenomena such as systemic risk and collective dependence. 
  Therefore,  recent  research has increasingly focused on mean field problems that incorporate common noise,  aiming to advance the theory and precisely model complex interconnected systems,  see \cite{Graber-2016, CD-2018-2, Ahuja-Ren-Yang-2019, HT-2022}, etc.  
The presence of common noise invalidates many key analytical techniques in mean field   theory ineffective, thereby   challenges  traditional PDE methods.  Owing to  inherent advantages  in handling stochastic dynamics,   the probabilistic method has a distinct  advantage  for  studying     large-scale interactive systems.  Consequently, the related theoretical frameworks are relatively well-developed in the finite-time horizon setting, refer to  \cite{CD-2013, CD-2015, Graber-2016,  Ahuja-Ren-Yang-2019, HT-2022, 2024-GNY} and the references therein. 
Due to the  theoretical and practical importance, infinite horizon mean field problems have got   increasing research attention,  such as \cite{HLY-2012, Bayraktar-Zhang-2023, Wei-Xu-Yu-2023, Hua-Luo}, etc. Furthermore,  the inclusion of regime switching further enables models to capture structural changes in the environment, which is essential  for representing real-world phenomena such as economic fluctuations, policy shifts, and market regime transitions  as discussed in \cite{Mao-Yuan-2006, 2024-GNY} among others. Motivated by these developments, this study shall adopt a probabilistic approach to investigate infinite horizon linear quadratic mean field control and game problems that incorporate both common noise and Markovian switching.

Our objective is to characterize   optimal control  and equilibria by  analyzing  infinite horizon coupled FBSDEs  of conditional McKean-Vlasov type with Markovian switching,   referred to as   Hamiltonian systems.
Due to the broad applicability of  FBSDEs, we first develop the theoretical foundations  in a general nonlinear  framework, even though our primary focus is on the linear systems inherent to the LQ setting.
%
%
While the theory of  coupled  FBSDEs is relatively well-established  (see e.g.,  \cite{Hu-Peng-1995, Peng-Wu-1999, Yu-2022, WY-2021, Tian-Yu}), the study of such systems   incorporating   key features as infinite   horizon,  conditional mean-field interactions, and Markovian switching   remains  underdeveloped.

Consequently, our first step is to establish the well-posedness of such systems.   To this end,  we utilize the method of continuity, as introduced in \cite{Hu-Peng-1995, Peng-Wu-1999}, and    generalize the domination-monotonicity condition (see \cite{ WY-2021,  Yu-2022, Tian-Yu, Hua-Luo}) to accommodate our framework.  While such conditions are typically expressed in a probabilistic form within the mean-field literature \cite{Bensoussan-Yam-Zhang-2015, Ahuja-Ren-Yang-2019, Tian-Yu}, 
 we introduce a version   using state variables and measures  to make the underlying structural relationships more explicit. We emphasize that this latter formulation ensures the validity of the probabilistic conditions.

  For conditional LQ mean field control problems, we  characterize the open-loop optimal control via  an infinite horizon coupled FBSDEs of  conditional McKean-Vlasov  type with Markovian switching. This characterization is established via the variational method under a semi-positive definite condition.   In doing so,
  we   perform a linear transformation, motivated by our previous findings   \cite{WY-2021, Wei-Xu-Yu-2023} concerning the role of cross-term coefficients in the optimal control space,  to eliminate these terms from the cost functional,  thereby enabling a unified analysis.
Within this setting, the domination-monotonicity condition is explicitly specified for the resulting linear  Hamiltonian   system, which in turn ensures its well-posedness.

We proceed to extend our analysis to  game problems.  The characterization of  Nash equilibrium is equivalent to a fixed-point problem, whose solvability is  closely tied to the well-posedness of an associated Hamiltonian system.
Existing well-posedness results, such as those in \cite{Ahuja-Ren-Yang-2019, Graber-2016}, 
 typically require the removal of all conditional mean-field terms involving both the state and control from the state equation, as well as all conditional mean-field terms of   control  from the cost functional.  
Our framework accommodates conditional mean-field   controls in both the state dynamics and cost functional, thereby extending the existing theory.

The rest of the paper is organized as follows.  We begin by introducing some necessary notations in Section 2. Section 3 is devoted to a preliminary analysis of infinite horizon conditional McKean-Vlasov SDEs and BSDEs with Markovian switching. We then proceed, in Section 4, to establish the well-posedness of  infinite horizon coupled  FBSDEs of conditional McKean-Vlasov type with Markovian switching. The theoretical results are subsequently applied in Section 5 to characterize optimal control  and equilibria in conditional mean field control and game problems over an infinite horizon.

%
 %
%
%
%

\section{Preliminaries}\label{SEC_Int}

We  work on the underlying probability space $(\O,\cF, \dbP)$, which is the complete  product space  of $(\Omega^0, \mathcal{F}^0 ,  \mathbb{P}^0)$ and $(\Omega^1, \mathcal{F}^1, \mathbb{P}^1)$ specified as follows.
\begin{itemize}
  \item  $(\Omega^0, \mathcal{F}^0,   \mathbb{P}^0)$ is a complete filtered probability space   endowed with a $d_0$-dimensional standard Brownian motion $ W^0  $.
Moreover,   $\mathbb{F}^0= \{\mathcal{F}^0_{s}\}_{ s\ges 0 }$ is the natural filtration generated by $ W^0  $ and augmented by all the $\dbP^0$-null sets.
  \item   $(\Omega^1, \mathcal{F}^1,  \mathbb{P}^1)$ is also a  complete filtered probability space, on which   a  $d$-dimensional standard Brownian motion $W  $ and a continuous-time   Markov chain $\{\alpha_{s}\}_{s\geqslant 0}$ are  defined.
  The chain $\{\alpha_{s}\}_{s\geqslant 0}$ takes values in a finite state space $  \textbf{M}  =\left\{1, \ldots, m_0\right\}$ and is assumed  to be   independent of $W  $.
     Denoting by   $\dbF^{1, W}=\{\cF_s^{1,W}\}_{s\ges 0}$  and $ \dbF^{1, \alpha}=\{\cF_s^{1,\a}\}_{s\ges 0}$   the filtrations generated by $W$ and $\alpha$, resp., we  introduce  $\dbF^1=\{\cF_s^1\}_{s\ges 0}$ with $\cF_s^1:=\cF_s^{1,W}\otimes \cF_s^{1,\a}$,  augmented by all $\dbP^1$-null sets.
\end{itemize}
Then,  $\Omega:=\Omega^0\times \Omega^1,$ $ \cF:=\cF^0\otimes \cF^1,$ $ \dbP:=\dbP^0\otimes\dbP^1$   with $\cF$ completed with respect
to $\dbP$.
The filtration  $\dbF=\{\cF_s\}_{s\ges 0}$   on   $(\Omega,\cF,\dbP)$ is given with  $\cF_s:=\cF_s^0\otimes\cF_s^1$ and augmented by all $\dbP$-null sets.
In what follows, $ W^0  $ and  $W  $ is referred  to as common noise and individual noise, resp.

\ss

We now specify  the Markov chain $\{\alpha_{s}\}_{s\geqslant 0}$. Under $\mathbb{P}^1$,
its generator is given by  $\cA (\cd)=\left( a_{ij} \right)_{m_0\times m_0}$, where $a_{ij} $ denotes the  transition intensity from state $i$ to state $j$,  if  $i\neq j$, while $a_{ii}=-\sum\limits_{j\neq i}a_{ij}$ for all $i,j\in\textbf{M} $.
As usual,  a canonical martingale associated with $\a$  can be constructed as follows. For each pair $(i,j)\in  \textbf{M}\times \textbf{M}$ and $t\ges 0$,  define
$$[M_{ij}]_t:=\sum\limits_{s\in[0,t]} {\bf 1}_{\{\a(s-)=i\}}\cd{\bf 1}_{\{\a(s)=j\}},\q \langle M_{ij}\rangle _t:=\int_0^ta_{ij}{\bf 1}_{\{\a(s-)=i\}}\mathrm ds.$$
Then   $M_{ij}(\cd):=[M_{ij}]_{\cd}-\lan M_{ij}\ran _{\cd} $ is a square--integrable purely discontinuous $\dbF^{1,\a}$-martingale;  we refer  to \cite{2006-RW} (Lemma IV.21.12) and \cite{2012-DH, 2024-GNY} for further details.
For an $ \dbF$-predictable function $\f(\cd)=(\f_{ij}(\cd))_{i,j\in \textbf{M}}$, we write
$$
\int_0^t\f_s\circ \mathrm dM_s=\sum\limits_{i,j\in\textbf{M}}\int_0^t\f_{ij}(s)\mathrm d M_{ij}(s). $$


Let $\mathbb{H}$ be a Euclidean space and  $\sP(\dbH)$ be the space of probability measures on $\dbH$.
  For any  $\dbH$-valued  random variable $\xi$ on $(\Omega, \mathcal{F}, \mathbb{P})$,   for each fixed $\omega^0 \in \Omega^0$, we denote  the law of $\xi(\omega^0, \cdot)$ by  $\mathcal{L}^1_{\xi(\omega^0)}=\mathcal{L}_{\xi(\omega^0, \cdot)}$.
Then,  by Lemma 2.4 in \cite{CD-2018-2},
 $\cL^1_\xi$ defines a random variable from $(\Omega^0, \mathcal{F}^0, \mathbb{P}^0)$ into $(\mathscr{P}(\mathbb{H}), \mathcal{B}(\mathscr{P}(\mathbb{H})))$, and provides  a conditional law of $\xi$ given $\cF^0$.

Next, we recall  the space of probability measures with finite second moments
$$\sP_p(\dbH)=\Big\{\mu \in \sP(\dbH) \bigm|
\int_{\dbH}|x|^p\mu(\mathrm dx)<\i\Big\},$$
which is complete  under  the following {\it Wasserstein-$p$ metric} (or simply {\it $\mathbf{w}_p$-metric}), for $\mu_1, \mu_2\in\sP_p(\dbH)$,
$$\ba{ll}
\ns\ds \mathbf{w}_p(\mu_1, \mu_2)=\inf\bigg\{\(\int_{\dbH\times\dbH}|x_1-x_2|^p \mu (\mathrm dx_1, \mathrm dx_2)\)^{1\over p}
\biggm|\2n\ba{ll} \ns\mu\in\sP_p(\dbH\times\dbH), \ \mu(\cd, \dbH)=\mu_1(\cd),\\
\ns
 \mu(\dbH,\cd)=\mu_2(\cd)\ea\bigg\}. \ea$$
For $i=0,1$,  $\mathbb{E}^i[\cd]$  denotes   the expectation taken over
   $\omega^i \in\Omega^i$, and  $\delta_0$ represents the Dirac measure at the origin.
For $\xi_1, \xi_2 \in L^2_{\mathcal{F}}(\Omega, \mathbb{H})$    with conditional distributions $\mu_1$ and $\mu_2$ given $\cF^0$, resp.,    the following inequality  holds,
\bel{E1-inequ}
 \left| \mathbb{E}^1[\xi_1] - \mathbb{E}^1[\xi_2] \right| \les \mathbf{w}_2(\mu_1, \mu_2)
\les \left( \mathbb{E}^1[|\xi_1 - \xi_2|^2] \right)^{1/2}.
\ee

For   some constant $\kappa \in \mathbb{R}$, $p\geqslant 1$ and for any $0 \leqslant t <T< \infty$, we  introduce  several spaces of random variables and stochastic processes as follows.
\begin{itemize}
%
%

\item $\!\!L_{\mathcal{F}_{t}}^{p}(\Omega ; \mathbb{H}):=\Big\{\xi: \Omega\to\mathbb{H} \mid  \xi$ is $\mathcal{F}_{t}$-measurable, and $\mathbb{E}[|\xi|^{p}]<\infty \Big\}$;
\item $\!\!\sD^p:=\{  (t,x_t,\iota)\mid t\in  [0, \infty),\ x_t\in  L_{\mathcal{F}_t}^{p}(\Omega;\mathbb{R}^n),\ \iota\in \textbf{M} \}$;
\item $\!\!\cS^p_\dbF(t,T;\mathbb{H}):=\Big\{ \f: \Omega\times [t,T]\to\mathbb{H}\mid \f$ is $\dbF$-progressively measurable with continuous paths, \\
    \hspace*{9.6cm}
    $\dbP$-a.s., and $\mathbb{E}\big[\sup\limits_{s\in[t,T]}|\f_s|^p\big]<\infty \Big\}$;
    %
%
%
%
%
%
%
%
%
%
\item $\!\!L_{\mathbb{F}^{0}}^{\infty}(t, \infty ; \mathbb{H}):=
\Big\{\varphi: \Omega^{0}\times[t, \infty)\times\textbf{M}\to\mathbb{H}\mid \varphi   $ is $\mathbb{F}^{0}$-progressively measurable, and \\
\hspace*{8.8cm}
$\esssup\limits_{(s, \omega^{0}, \imath) \in[t, \infty) \times \Omega^{0}\times \textbf{M}}\|\varphi(s, \omega^{0}, \imath)\|<\infty \Big\}$;
\item $\!\!L_{\mathbb{F}^0}^{p, \kappa}(t, \infty; \mathbb{H})\!:=\!
\Big\{ \varphi: \Omega^0\times[t, \infty)\to\mathbb{H}\mid \varphi$ is $\mathbb{F}^0$-progressively measurable,\\
\hspace*{9.9cm}and
$\mathbb{E}^0\[ \displaystyle\int_{t}^{\infty} \mathrm e^{\kappa s}|\varphi_s|^{p}  \mathrm ds\]  < \infty \Big\}$;
\item $\!\!L_{\mathbb{F}}^{p, \kappa}(t, \infty; \mathbb{H})\!:=\!
\Big\{ \varphi: \Omega\times[t, \infty)\to\mathbb{H}\mid \varphi$ is $\mathbb{F}$-progressively measurable,\\
\hspace*{10cm}and
$\mathbb{E}\[ \displaystyle\int_{t}^{\infty} \mathrm e^{\kappa s}|\varphi_s|^{p}  \mathrm ds\] < \infty \Big\}$;
%
%
%
%
%
%
%
%
\item $\!\!\mathcal{M}_{\mathbb{F}}^{p, \kappa}(t, \infty; \mathbb{H}):=
\Big\{\phi=(\phi_{kl})^{m_0}_{i, j=1}\mid\phi_{i j}: \Omega\times[t, \infty)\to\mathbb{H}$ is $\mathbb{F}$-progressively measurable, \\
\hspace*{6.7cm} and
$ \sum\limits_{i, j=1}^{m_0}\mathbb{E}\[\displaystyle\int_{t}^{\infty}\mathrm e^{\kappa s}|\phi_{i j}(s)|^{p} a_{i j} \mathbf{1}_{\{\alpha_{s-}=i\}} \,\mathrm ds\] <\infty \Big\}$.
%
%
%
%
%
%
%
%
%
\end{itemize}
%
%

%
%
%
%
%
%

To simplify the notations, we  introduce the following abbreviations,
$$
\begin{aligned}
&\mathscr{R}:= \mathbb{R}^{n} \times \mathbb{R}^{n  d} \times \mathbb{R}^{n  d_0} , \\
&\cL_{\mathbb{F}}^{2, \kappa}(t, \infty):=  L_{\mathbb{F}}^{2, \kappa}(t, \infty;\mathbb{R}^n)
\times L_{\mathbb{F}}^{2, \kappa}(t, \infty;\mathbb{R}^{n  d})
\times L_{\mathbb{F}}^{2, \kappa}(t, \infty;\mathbb{R}^{n  d_0})
\times\mathcal{M}_{\mathbb{F}}^{2, \kappa}(t, \infty; \mathbb{R}^n) .
\end{aligned}
$$
The inner product $\langle \cdot,\cdot\rangle$ and the norm $\left|\cdot\right|$ of $\mathscr{R}$ are introduced as follows,
for any $\theta =(y^{\top}, z^{\top},$ $  (z^0)^\top)$,
$\bar{\theta} =(\bar{y}^{\top}, \bar{z}^{\top}, (\bar{ z }^0)^{\top})^{\top}\in\mathscr{R}$,
$$\ba{ll}
\ns\ds \langle \theta, \bar{\theta}\rangle := \langle y, \bar{y}\rangle
+\langle z, \bar{z}\rangle+\langle  z^0  , \bar{ z}^0  \rangle,
\q |\theta|:=\sqrt{\langle \theta, \theta\rangle}.
\ea $$
%
%
%
%
%
%
%
%

\section{Infinite horizon conditional McKean-Vlasov   FBSDEs with markovian switching---decoupled case}

In this section, we study infinite horizon conditional McKean-Vlasov FBSDEs  with Markovian switching in the decoupled setting.    Let us be given the following mappings
$$
\begin{aligned}
&b:\Omega\times[0, \infty) \times\mathbb{R}^n\times \mathscr{P}(\mathbb{R}^n)\times \textbf{M}\to\mathbb{R}^n, \\
& \sigma:
\Omega\times[0, \infty) \times\mathbb{R}^n\times \mathscr{P} (\mathbb{R}^n)\times \textbf{M}\to\mathbb{R}^{n d}, \\
&\widetilde{\sigma}   :
\Omega\times[0, \infty) \times\mathbb{R}^n\times \mathscr{P} (\mathbb{R}^n)\times \textbf{M}\to\mathbb{R}^{n   d_0},\\
&  f:\Omega \times [0, \infty) \times \mathbb{R}^{n}\times\mathscr{R}\times \mathscr{P} (\mathbb{R}^{n}\times\mathscr{R})\times \textbf{M} \to\mathbb{R}^n.
\end{aligned}
$$
For any initial condition $(t,x_t,\iota)\in \sD^2 $,  we first consider the following forward SDE \begin{equation}\label{equ-SDE}
\left\{
\begin{aligned}
&\!   \mathrm dX_s =b(s, X_s, \cL^1_{X_s},\alpha_s) \, \mathrm ds
+\sigma(s, X_s, \mathcal{L}^1_{X_s},\alpha_s) \, \mathrm dW_s
+\widetilde{\sigma}  (s, X_s, \mathcal{L}^1_{X_s},\alpha_s) \, \mathrm d    W^0 _s,\ s\ges t,\\
&\! X_t = x_t,\quad
\alpha_t=\iota,
\end{aligned}
\right.
\end{equation}
and then    the following BSDE
\begin{equation}\label{equ-BSDE}
\begin{aligned}
&Y_s= Y_T+\int_{s}^{T}f(r, X_r, Y_r, Z_r, Z^0_r,\mathcal{L}^1_{(X_r,Y_r, Z_r, Z^0_r)},\alpha_r) \, \mathrm dr
- \int_{s}^{T} Z_{r} \, \mathrm dW_r\\
&\qquad -\int_{s}^{T}Z^0_{r} \, \mathrm d  W^0 _r
- \int_{s}^{T}K_r \circ\mathrm d M_r   ,  \text { for all } 0 \les t\les s \les T<\infty.
\end{aligned}
\end{equation}

To simplify the notations, we set
 $\Gamma:=(  b^{\top}, \sigma^{\top}, \widetilde{\sigma}  ^{\top})^{\top}$,
$\Theta :=( Y^{\top} , Z^{\top} ,(Z^0)^{\top} )^{\top}$ and define the following  operators
\bel{cL}\ba{ll}
\ns\ds[\sL V](s,x,\mu,\imath) :=\frac12\tr\big[(\si\si^\top+\widetilde{\si}\widetilde{\si}^\top)(s,x,\mu,\imath)\nabla^2 V (x,\imath)\big]  +b(s,x,\mu,\imath)\nabla  V (x,\imath), \\
\ns\ds
[\sA  V ](x,\imath):=\sum\limits_{j\in\textbf{M}}a_{\imath j}(x) V (x,j)=\sum\limits_{j\in\textbf{M},\ j\neq \imath}a_{\iota j}(x)\big( V (x,j)- V (x,\imath)\big),\  x\in\dbR^n, \ \imath \in \textbf{M},
\ea\ee
 for   $(s,x,\mu,\imath) \in [0,\i)\times \dbR^n\times \sP(\dbR^n)\times\bf{M}$ and    $ V (\cd,\imath)\in C^2(\dbR^n)$.

\subsection{Infinite horizon conditional McKean-Vlasov SDEs with markovian switching}

Let $p\ges2$,   we now  make the following  assumptions on    the  coefficient $\G$ of \eqref{equ-SDE}.

  \begin{description}
    \item[(A1)]   (i) For any $(x,  \imath) \in \mathbb{R}^{n} \times\textbf{M} $,
$ \mu \in \mathscr{P}  (\dbR^n)  $, $\G(\cdot,x, \mu,\imath)$
is  $\mathbb{F}$-progressively measurable;

  \   (ii) There exist  nonnegative constants $ l_{\mathbf{h}x},  l_{\mathbf{h}\mu}$ such that, for all $(\o,s, \imath) \in\Omega\times[0, \infty) \times \textbf{M}$, and  $(x,\mu)$, $(\bar{x},\bar{\mu}) \in \mathbb{R}^{n} \times \mathscr{P} _p (\mathbb{R}^n)$,
$$
|\mathbf{h}(s, x, \mu, \imath)-\mathbf{h}(s, \bar{x}, \bar{\mu}, \imath)| \les   l_{\mathbf{h}x}|x-\bar{x}|+ l_{\mathbf{h}\mu}\mathbf{w}_2(\mu, \bar{\mu}),
$$
   where $\mathbf{h}=b,\si,\widetilde{\si}$, resp.

    \item[(A2)$_p$]
   For all $\imath \in\textbf{M}$,
$ \G(\cdot, 0, \delta_0, \imath) \in L_{\mathbb{F}}^{p,  \kappa^{\ast}}(0, \infty ; \mathscr{R})$ for some $ \kappa^{\ast} \in\mathbb{R}$.

    \item[(A3)]
There exist     a  function  $ V :\dbR^n\times\textbf{M}\to\dbR^+$ being twice continuously differentiable in  $x\in\dbR^n$, a function $\f:\dbR^n\to\dbR^+$ satisfying  $\f(x)\les  V (x,\iota)$, a function $\l_3(\cd)\in L_{\mathbb{F}}^{1,\kappa^\ast}(0,\i;\dbR^+)$  and constants $\l_1\in\dbR$, $\l_2\in\dbR^+$   such that,
$$
[\sL  V ]( s,x, \mu,\imath)+[\sA  V ]( s, \imath)\les \l_1 V (x,\imath)+\l_2\int_{\dbR^n}\f(x)\mu(\mathrm dx)+\l_3(s)
, $$
holds  for  all $(s,x,\imath)\in [0,\i)\times\dbR^n\times \textbf{M} $ and $\mu\in \sP_{ V }(\dbR^n) :=\big\{\mu\in\sP(\dbR^n)\mid\int_{\dbR^n} V (x,\imath)\mu(\mathrm dx)<\i, \ \forall \imath \in \textbf{M}\big\}$.
  \end{description}

%
%
%

The following is the result about the existence and uniqueness of the solution of   \eqref{equ-SDE}.

\begin{proposition}\label{Pro-SDE-solu}\sl  Let $p\ges 2$  and   {\bf(A1)},  {\bf(A2)$_p$}   hold.

(i) For any
   $(t, x_t,\imath)\in \sD^p $  and   $T\ges t$,  the conditional McKean-Vlasov SDE \eqref{equ-SDE} admits a unique  solution $X \in \cS^p_\dbF(t,T;\dbR^n)$.

   (ii)  If in addition  {\bf(A3)} is assumed, then  for any $ s\ges t$ and $\k\les \k^\ast$,  the following estimates hold,
$$\left\{\2n\ba{ll}
\ns\ds  -( \kappa+\l_1+\l_2)\dbE \int_t^s \mathrm e^{\kappa r }    V (X_r,\a_r) \mathrm dr   \les \dbE\[\mathrm e^{\kappa t }   V (  x_t,\iota )+ \int_t^s \mathrm e^{\kappa ^\ast r }   \l_3(r) \mathrm dr\],\\
 \ns\ds
 \dbE[\mathrm e^{\kappa s }  V (  X_s,\a_s)]\les     \mathrm e^ {(\kappa+\l_1+\l_2) (s-t)}  \dbE\[\mathrm e^{\kappa t }   V (  x_t,\iota) + \int_t^s \mathrm e^{\kappa^\ast r }   \l_3(r) \mathrm dr \].
\ea\right.$$
\end{proposition}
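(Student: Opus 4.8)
The plan is to prove the two parts by different tools: part~(i) by a fixed-point argument on each finite interval $[t,T]$, and part~(ii) by an It\^o--Dynkin computation against $V$ followed by Gronwall's inequality. For (i), fix a finite $T\ges t$ and view $\cS^p_\dbF(t,T;\dbR^n)$ as a Banach space. I would first define a map $\Phi$ on it: given $U\in\cS^p_\dbF(t,T;\dbR^n)$, form the conditional law $\mu^U_s:=\cL^1_{U_s}$, which by Lemma~2.4 of \cite{CD-2018-2} is a well-defined $\sP_p(\dbR^n)$-valued, $\dbF^0$-progressively measurable process; freeze it in the coefficients and let $X=\Phi(U)$ solve $\mathrm dX_s=b(s,X_s,\mu^U_s,\alpha_s)\,\mathrm ds+\sigma(s,X_s,\mu^U_s,\alpha_s)\,\mathrm dW_s+\widetilde\sigma(s,X_s,\mu^U_s,\alpha_s)\,\mathrm dW^0_s$. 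By {\bf(A1)}(ii) the coefficients are $\dbF$-progressively measurable and Lipschitz in $x$ uniformly in $(\omega,s,\imath)$, while {\bf(A2)$_p$} controls them at the origin, so this is a classical Lipschitz SDE with a unique $\cS^p_\dbF(t,T;\dbR^n)$ solution (see, e.g., \cite{Mao-Yuan-2006}) and $\Phi$ is well defined. Then I would show $\Phi$ is a contraction: for $U,\bar U$ I estimate $\dbE\big[\sup_{t\les r\les s}|\Phi(U)_r-\Phi(\bar U)_r|^p\big]$ via the Burkholder--Davis--Gundy inequality and {\bf(A1)}(ii), the crucial point being that the measure arguments contribute only $\mathbf{w}_2(\mu^U_r,\mu^{\bar U}_r)\les\big(\dbE^1[|U_r-\bar U_r|^2]\big)^{1/2}$ by \eqref{E1-inequ}; after taking $\dbE=\dbE^0\dbE^1$ and using Jensen (with $p/2\ges1$) this gives a bound $\les C(s-t)\|U-\bar U\|^p_{\cS^p}$ on a short interval. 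A contraction there, iterated over finitely many subintervals, yields existence and uniqueness on all of $[t,T]$; the finiteness of $\textbf{M}$ and the uniformity of the Lipschitz constants in $\imath$ make the switching harmless.

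For (ii), I would apply It\^o's formula to $s\mapsto\mathrm e^{\kappa s}V(X_s,\alpha_s)$. The continuous part yields the drift $\mathrm e^{\kappa s}\big(\kappa V+[\sL V]\big)(s,X_s,\cL^1_{X_s},\alpha_s)\,\mathrm ds$ plus the $\nabla V$-stochastic integrals against $W$ and $W^0$, and the chain's jumps yield $\mathrm e^{\kappa s}[\sA V](X_s,\alpha_s)\,\mathrm ds$ plus a purely discontinuous (compensated-jump) martingale $\int_t^{\cdot}\mathrm e^{\kappa r}\,\Psi_r\circ\mathrm dM_r$ with $\Psi_{ij}(r)=V(X_r,j)-V(X_r,i)$. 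Localising by $\tau_N:=\inf\{s\ges t:|X_s|\ges N\}\wedge T$ so both martingale parts vanish in expectation on $[t,s\wedge\tau_N]$, taking expectations, and inserting {\bf(A3)}, the drift is bounded by $(\kappa+\lambda_1)V(X_r,\alpha_r)+\lambda_2\int_{\dbR^n}\varphi(x)\,\cL^1_{X_r}(\mathrm dx)+\lambda_3(r)$. The mean-field term is controlled by the tower property and $\varphi(x)\les V(x,\imath)$, since $\dbE\big[\int_{\dbR^n}\varphi(x)\,\cL^1_{X_r}(\mathrm dx)\big]=\dbE[\varphi(X_r)]\les\dbE[V(X_r,\alpha_r)]$, which merges $\lambda_2$ into the constant $\kappa+\lambda_1+\lambda_2$, while $\mathrm e^{\kappa r}\lambda_3(r)\les\mathrm e^{\kappa^{\ast} r}\lambda_3(r)$ because $\kappa\les\kappa^{\ast}$ and $\lambda_3\ges0$. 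Writing $g(s):=\dbE[\mathrm e^{\kappa s}V(X_s,\alpha_s)]$, this produces the integral inequality $g(s)\les g(t)+(\kappa+\lambda_1+\lambda_2)\int_t^s g(r)\,\mathrm dr+\dbE\int_t^s\mathrm e^{\kappa^{\ast} r}\lambda_3(r)\,\mathrm dr$. The second asserted estimate is Gronwall's inequality applied to this relation; the first follows by moving the $(\kappa+\lambda_1+\lambda_2)\int_t^s g$ term to the left, using $g(s)\ges0$, and recalling $g(t)=\dbE[\mathrm e^{\kappa t}V(x_t,\iota)]$.

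The hard part will be making the It\^o--Dynkin step and the limit $N\to\infty$ rigorous when $V$ is only $C^2$ and nonnegative and $X$ is merely controlled in $\cS^p$: one must guarantee that both the continuous and the jump local martingales truly have zero expectation (hence the stopping times $\tau_N$) and that the limit preserves the inequalities. The sign hypotheses $V,\varphi,\lambda_3\ges0$ are what make this possible---Fatou's lemma controls the left-hand side $\dbE[\mathrm e^{\kappa(s\wedge\tau_N)}V(X_{s\wedge\tau_N},\alpha_{s\wedge\tau_N})]$, while monotone convergence, together with the a-priori finiteness of $\dbE\int_t^s\mathrm e^{\kappa r}V(X_r,\alpha_r)\,\mathrm dr$ coming from the first estimate, handles the drift integral. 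A secondary, more routine point---already needed in (i)---is the measurability of $\omega^0\mapsto\cL^1_{U_s}$ and the validity of \eqref{E1-inequ} within the contraction estimate; these are precisely where the common-noise/conditional structure enters, and both are furnished by Lemma~2.4 of \cite{CD-2018-2} and by \eqref{E1-inequ}.
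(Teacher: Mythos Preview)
Your proof is correct, and for part~(ii) it is essentially the paper's argument with the localisation spelled out more carefully (the paper simply writes ``applying the generalized It\^o formula'' and takes expectations directly). For part~(i), however, you take a genuinely different route. The paper does not run a Picard iteration on the full switching SDE; instead it introduces the jump times $\tau_0=t<\tau_1<\cdots$ of $\alpha$, observes that on each stochastic interval $[[\tau_k,\tau_{k+1}[[$ the chain is frozen at a deterministic value, so \eqref{equ-SDE} reduces to a classical conditional McKean--Vlasov SDE with common noise (no switching), and invokes \citedef{CD-2018-2}{Proposition~2.8} on each piece, concatenating the solutions. Your approach folds the Markov chain into the coefficients from the start and repeats the contraction argument that underlies that proposition; this is more self-contained but duplicates work already packaged in \cite{CD-2018-2}, and it requires you to justify well-posedness of the frozen-law SDE with \emph{both} random $\dbF^0$-progressive coefficients and Markovian switching---a mild extension of \cite{Mao-Yuan-2006} that you should state explicitly rather than cite directly. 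The paper's decomposition is cleaner precisely because it separates the two sources of difficulty (conditional law and regime switching) and handles each with an existing result.
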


\begin{proof}
 (i) For any $T\ges t$, we know  almost every sample path  of  the Markov process $\a$ is a right continuous step function with  finitely many simple jumps on $[t,T]$. So we can   construct   a sequence of stopping times $\{\t_k\}_{k\ges 0}$ as follows
 $$\t_0:=t,\q \t_k:=\inf\{s>\t_{k-1}\mid \a_s\neq\a_{s-}\},\ k\ges 1,$$
to satisfy
\begin{itemize}
  \item   for almost every $\omega\in \Omega$, there is a finite $k^\ast =k^\ast (\omega)$ for $t=\t_0<\t_1<\cds<\t_{k^\ast }=T$ and $\t_k=T$, if $k>k^\ast $;
  \item  for $k\ges 0$,  $\a_s=\a_{\t_k}$, $s\in[[\t_k,\t_{k+1}[[$.
\end{itemize}

When $s\in[[\t_0,\t_{1}]]$, \eqref{equ-SDE} reduces to
 \begin{equation}\label{equ-SDE-0}
\left\{
\begin{aligned}
&\!  \mathrm dX_s =b(s, X_s, \cL^1_{X_s},\imath) \, \mathrm ds
+\sigma(s, X_s, \mathcal{L}^1_{X_s},\imath) \, \mathrm dW_s
+\widetilde{\sigma}  (s, X_s, \mathcal{L}^1_{X_s},\imath) \, \mathrm d    W^0 _s, \\
&\!   X_t = x_t,
\end{aligned}
\right.
\end{equation}
 which is a     classical conditional McKean-Vlasov SDE.
 By extending   Proposition 2.8  in \cite{CD-2018-2} to the case $p\ges 2$, we obtain the existence and uniqueness of $X \in \cS^p_\dbF(\t_0,\t_1;\dbR^n)$  satisfying \eqref{equ-SDE-0}.

Next, using   the obtained $X_{\t_1}\in L^p_{\cF_{\t_1}}(\Omega;\dbR^n)$, we  consider
$$
\left\{
\begin{aligned}
&\!  \mathrm dX_s =b(s, X_s, \cL^1_{X_s},\a_{\t_1}) \, \mathrm ds
+\sigma(s, X_s, \mathcal{L}^1_{X_s},\a_{\t_1}) \, \mathrm dW_s
+\widetilde{\sigma}  (s, X_s, \mathcal{L}^1_{X_s},\a_{\t_1}) \, \mathrm d    W^0 _s,\  s\in[[\t_1,\t_2]],  \\
&\!   X_{\t_1} = X_{\t_1} .
\end{aligned}
\right.
$$
Applying   \citedef{CD-2018-2}{Proposition 2.8}  again provides a unique    $X \in \cS^p_\dbF(\t_1,\t_2;\dbR^n)$ to satisfy  the above equation.
Repeating this procedure iteratively up to   the final interval $  [[\t_{k^\ast -1},\t_{k^\ast }]]$, we  get  a unique solution $X\in \cS^p_\dbF(t,T;\dbR^n)$ of \eqref{equ-SDE} on $[t,T]$.

\ms
(ii)
Applying the generalized  It\^o's formula to $\mathrm e^{\kappa s}V (  X_s,\a_s)  $ and using {\bf(A3)}, we  get
$$\ba{ll}
\ns\ds  \dbE[\mathrm e^{\kappa s }  V (  X_s,\a_s)  ] \\
\ns\ds  =\dbE[\mathrm e^{\kappa t }   V (  X_t,\a_t)]+\dbE \int_t^s \mathrm e^{\kappa r }\big(\kappa  V (X_r,\a_r)+  [\sL  V  ](r,   X_r,\cL^1_{X_r},\a_r )+  [\sA  V  ](   X_r,\a_r )\big )\mathrm dr  \\
\ns\ds  \les \dbE[\mathrm e^{\kappa t }   V (  X_t,\a_t)]+\dbE \int_t^s\mathrm e^{\kappa r }  \(( \kappa+\l_1) V (X_r,\a_r)+ \l_2\int_{\dbR^n}\f(x)\cL_{X_r}^1(\mathrm dx) +\l_3(r)\) \mathrm dr  \\
\ns\ds  \les \dbE[\mathrm e^{\kappa t }   V (  X_t,\a_t)]+\dbE \int_t^s\mathrm e^{\kappa r }   \big(( \kappa+\l_1+\l_2) V (X_r,\a_r) +\l_3(r)\big) \mathrm dr ,\q s\ges t .
\ea$$
The desired estimates therefore follow.
\end{proof}

%
%
%
%
%
%
%
%
From the preceding proof, we observe that the role of assumption \textbf{(A3)}   is to ensure the  following inequality
\bel{key}\dbE\[  [\sL  V ] (r, X_r, \cL^1_{X_r},\a_r)+ [\sA  V ] ( X_r,\a_r)\]\les\dbE\[( \l_1+\l_2)  V (X_r,\a_r) +\l_3(r)\].\ee
With this in mind, we may specify a suitable Lyapunov function in    \textbf{(A3)}   to  derive an
 $L^p$-estimate ($p\ges2$) for  the solution of  \eqref{equ-SDE} via Proposition \ref{Pro-SDE-solu} (ii).
To this end,  we impose  the following   condition  on the coefficient  $\G$.

\begin{description}
 \item[(A4)$_p$]
There exist  two constants  $\k_x,\k_{x\mu} \in\dbR$ such that,    for all $\imath\in\textbf{M}$ and $ s\ges 0$, $(x,\mu), (\bar x,\bar\mu)\in \dbR^n\times  \sP_p (\dbR^n)$,
$$\ba{ll}
\ns\ds
2\lan b(s,x,\mu,\imath)-b(s,\bar x,\bar\mu,\imath),x-\bar x \ran+(p-1)|\si(s,x,\mu,\imath)-\si(s,\bar x,\bar\mu,\imath)|^2 \\
\ns\ds +(p-1)|\widetilde{\si}(s,x,\mu,\imath)-\widetilde{\si}(s,\bar x,\bar\mu,\imath)|^2
\les \k_x|x-\bar x|^2+\k_{x\mu} |\mathbf{w}_{2}(\mu,\bar \mu)|^{2}  . \ea$$

  \end{description}

%

 \begin{remark}\label{Re-con-SDE}\sl Using  {\bf{(A1)}}-(ii)  and   {\bf{(A4)}}$_p$, we  carry out the following preliminary computation. For any $\e>0$ and  $(s,x,\mu,\imath)\in [0,\i)\times\dbR^n\times \sP_2(\dbR^n)\times \textbf{M} $,
 $$\ba{ll}
 \ns\ds  2\lan b(s,x,\mu,\imath),x \ran+ (p-1) |\si(s,x,\mu,\imath)|^2  +(p-1)  |\widetilde{\sigma}(s,x,\mu,\imath)|^2\\
 \ns\ds \les (\k_x+3\e)|x |^2+(\k_{x\mu}  +2\e)|\mathbf{w}_{2}(\mu,\d_0)|^{2}   +   C_{p,\e}| \G(s,0,\d_0,\imath)|^2 ,
 \ea$$
 where $C_{p,\e}$ is a nonnegative constant depending on $p$ and $\e$.
 Then,    taking  $ V (x,\imath)=|x|^p$,  we get
$$\ba{ll}
\ns\ds [\sL  V ](s,x,\mu,\imath)
 =  p|x|^{p-2}\lan b(s,x,\mu,\imath),x\ran + \frac p2 |x|^{p-2}\big(|\si(s,x,\mu,\imath)|^2+ |\widetilde{\sigma}(s,x,\mu,\imath)|^2\big)\\
 \ns\ds\hskip 3cm  +p\(\frac p2-1\) |x|^{p-4}\big(|\si(s,x,\mu,\imath)^\top x|^2  + |\widetilde{\sigma}(s,x,\mu,\imath)^\top x|^2 \big) \\
\ns\ds \les \frac {p} 2 |x|^{p-2}\( 2\lan b(s,x,\mu,\imath),x \ran +(p-1) |\si(s,x,\mu,\imath)|^2  +(p-1)  |\widetilde{\sigma}(s,x,\mu,\imath)|^2\)  \\
\ns\ds\les \frac {p} 2 |x|^{p-2}   \((\k_x+3\e)|x |^2+(\k_{x\mu} +2\e)|\mathbf{w}_{2}(\mu,\d_0)|^{2}  +    C_{p,\e}| \G(s,0,\d_0,\imath)|^2   \)   .
\ea$$
 \end{remark}

In what follows, we abbreviate
$$
\overline{\k}_p:=\min\Big\{-\frac {p (\k_x+\k_{x\mu} \mathbf{1}_{\{\k_{x\mu} >0\}})}{2}, \kappa^{\ast}\Big\},
\q {\rm{and }} \q \overline{\k}\equiv \overline{\k}_2=\min\{-\k_x-\k_{x\mu} \mathbf{1}_{\{\k_{x\mu} >0\}},\k^{\ast}\}. $$

 \begin{corollary}\label{Cor-SDE-1}
 \sl Let $p\ges 2$ and  assume   {\bf{(A1)}, \bf{(A2)$_p$}} and  {\bf{(A4)}}$_p$ hold.  For
$(t, x_t,\iota )\in \sD^p$, the solution $X $ of \eqref{equ-SDE} belongs to $L^{p, \kappa}_\dbF(t,\i;\dbR^n) $ with $\ds \kappa< \overline{\k}_p$. Moreover,  the following estimates hold,
  \bel{L-p-SDE-1} \ba{ll}
 \ns\ds \!\!\! {\rm(i)} \  \dbE[\mathrm e^{\kappa s }  |X_s|^p]\les    C_{p }\mathrm e^ {( \kappa-\overline \k_p +3\e ) (s-t)}  \dbE\[\mathrm e^{\kappa t } |x_t|^p+ \int_t^s\mathrm e^{\kappa^{\ast} r }  | \G(r,0,\d_0,\alpha_r)|^p \mathrm dr\],\q s\ges t,\\
\ns\ds  \!\!\! {\rm(ii)} \  \dbE \int_t^\i \mathrm e^{\kappa r }   |X_r|^p\mathrm dr  \les  C_p\dbE\[\mathrm e^{\kappa t }  |x_t|^p+ \int_t^\i \mathrm e^{\kappa^{\ast} r }| \G(r,0,\d_0,\alpha_r)|^p \mathrm dr\].\\
\ea \ee
Furthermore, let  $\bar X $ denote  by the solution  of   \eqref{equ-SDE} associated  with another  initial state $\bar x_t\in L_{\cF_t}^p(\O;\dbR^n)$ and  coefficient  $\bar \G$ satisfying   {\bf{(A1)}, \bf{(A2)$_p$}} and  {\bf{(A4)}}$_p$. Then we have
 \begin{equation}\label{SDE-Cont}
\begin{aligned}
& \mathbb{E} \int_{t}^{\infty} \mathrm e^{ \kappa r}|X _r-\bar  X _ r|^{p} \mathrm dr
 \les   C_p \mathbb{E}\[ \mathrm e^{\kappa
 t} |x_t-\bar x_{t}  |^{p}+\int_t^\infty \mathrm e^{\kappa^\ast
 r}|\G (r,\bar X_r,\cL^1_{\bar X_r},\a_r)-\bar\G (r,\bar X_r,\cL^1_{\bar X_r},\a_r)|^p \mathrm dr\] .
\end{aligned}
\end{equation}

 \end{corollary}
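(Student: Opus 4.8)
The plan is to read both displayed estimates off Proposition \ref{Pro-SDE-solu}(ii) after choosing the explicit Lyapunov function $V(x,\imath)=|x|^p$ (so that $\f(x)=|x|^p$) and verifying that assumption \textbf{(A3)} holds for it. Since $V$ does not depend on the regime $\imath$ and each row of the generator sums to zero, one has $[\sA V]\equiv0$, so only $[\sL V]$ must be controlled. Starting from the bound on $[\sL V]$ already obtained in Remark \ref{Re-con-SDE}, I would dispose of the two terms carrying the factor $|x|^{p-2}$ by Young's inequality $|x|^{p-2}a^2\les\frac{p-2}p|x|^p+\frac2p a^p$: the cross term $|x|^{p-2}|\mathbf{w}_2(\mu,\d_0)|^2$ splits into a multiple of $|x|^p$ and a multiple of $|\mathbf{w}_2(\mu,\d_0)|^p\les\int_{\dbR^n}|y|^p\mu(\mathrm dy)$ (Jensen), while $|x|^{p-2}|\G(s,0,\d_0,\imath)|^2$ is split, with a small parameter absorbed into the $\e$-slack, into a negligible multiple of $|x|^p$ and a multiple of $|\G(s,0,\d_0,\imath)|^p$. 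This produces \textbf{(A3)} with $\l_3(s)=C|\G(s,0,\d_0,\cdot)|^p\in L^{1,\kappa^\ast}_\dbF(0,\infty;\dbR^+)$ by \textbf{(A2)$_p$}, and with $\l_1+\l_2=\frac p2\big(\k_x+\k_{x\mu}\mathbf{1}_{\{\k_{x\mu}>0\}}\big)+O(\e)$. The indicator enters precisely here: when $\k_{x\mu}\les0$ the cross term has nonpositive coefficient and is discarded, contributing nothing to $\l_1,\l_2$, whereas for $\k_{x\mu}>0$ it must be split and contributes $\frac p2\k_{x\mu}$.

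With \textbf{(A3)} verified, the first (pointwise) estimate in \eqref{L-p-SDE-1} is the second inequality of Proposition \ref{Pro-SDE-solu}(ii) for this $V$, after bounding the exponent $\kappa+\l_1+\l_2\les\kappa-\overline{\k}_p+3\e$ (which holds because $-\overline{\k}_p\ges\frac p2(\k_x+\k_{x\mu}\mathbf{1}_{\{\k_{x\mu}>0\}})$). For the second (integral) estimate in \eqref{L-p-SDE-1} I would take $\kappa<\overline{\k}_p$, so that $\kappa+\l_1+\l_2<0$; the first inequality of Proposition \ref{Pro-SDE-solu}(ii) then carries a positive coefficient $-(\kappa+\l_1+\l_2)$ on the left, and letting $s\to\infty$ turns it into the stated bound, the source being finite because $\kappa\les\kappa^\ast$ together with \textbf{(A2)$_p$}. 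Finiteness of the right-hand side is exactly the assertion $X\in L^{p,\kappa}_\dbF(t,\infty;\dbR^n)$.

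For the continuous-dependence estimate \eqref{SDE-Cont} I would not try to write $X-\bar X$ as a clean conditional McKean-Vlasov SDE (its conditional law is not a function of $\cL^1_{X-\bar X}$ alone), but instead apply the generalized It\^o formula to $\mathrm e^{\kappa s}|X_s-\bar X_s|^p$ directly and repeat the computation of Remark \ref{Re-con-SDE} with $|X_s-\bar X_s|^p$ in the role of $V(X_s,\alpha_s)$. Adding and subtracting $\G(s,\bar X_s,\cL^1_{\bar X_s},\alpha_s)$ decomposes the drift and diffusion increments into (a) the increment of $\G$ between $(X_s,\cL^1_{X_s})$ and $(\bar X_s,\cL^1_{\bar X_s})$ at the common regime $\alpha_s$, governed by \textbf{(A4)$_p$}, and (b) the coefficient difference $\G(s,\bar X_s,\cL^1_{\bar X_s},\alpha_s)-\bar\G(s,\bar X_s,\cL^1_{\bar X_s},\alpha_s)$, which plays the role of the inhomogeneous datum $\G(\cdot,0,\d_0,\cdot)$ above. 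The conditional mean-field term is handled by bounding $\mathbf{w}_2(\cL^1_{X_s},\cL^1_{\bar X_s})^2\les\dbE^1|X_s-\bar X_s|^2$ via \eqref{E1-inequ} before taking the outer expectation $\dbE^0$; this is the step that makes the Wasserstein interaction fit the same monotonicity bookkeeping and again produces $\l_1+\l_2$ as the effective rate. Integrating the resulting differential inequality over $[t,\infty)$, with $\kappa<\overline{\k}_p$ guaranteeing the decay of $\mathrm e^{\kappa s}\dbE|X_s-\bar X_s|^p$, yields \eqref{SDE-Cont}.

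The main obstacle is the sharp constant bookkeeping rather than any single hard estimate: obtaining the exponent $\overline{\k}_p$ with the correct indicator $\mathbf{1}_{\{\k_{x\mu}>0\}}$ hinges on using \eqref{E1-inequ} in the correct direction (an \emph{upper} bound on $\mathbf{w}_2$), which is legitimate only when the Wasserstein term carries a nonnegative coefficient; otherwise the term must be dropped. A secondary point is the rigorous justification of the It\^o formula on the unbounded horizon and the vanishing of the boundary term at $s=\infty$, both of which are secured by the strict inequality $\kappa<\overline{\k}_p$.
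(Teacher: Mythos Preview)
Your proposal is correct and follows the paper's scheme: take $V(x,\imath)=|x|^p$, feed the bound of Remark~\ref{Re-con-SDE} into Proposition~\ref{Pro-SDE-solu}(ii), and repeat for $X-\bar X$ to get \eqref{SDE-Cont}. The one technical difference lies in how the mean-field cross term $\tfrac p2|x|^{p-2}(\k_{x\mu}+2\e)|\mathbf{w}_2(\mu,\d_0)|^2$ is absorbed. You use the pointwise Young inequality $|x|^{p-2}a^2\les\tfrac{p-2}{p}|x|^p+\tfrac2p a^p$ together with Jensen on the measure, which verifies \textbf{(A3)} as stated (with $\l_3(s)$ taken as a finite sum over $\imath\in\mathbf M$). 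The paper instead passes to expectation first, writes $\dbE\big[|X_r|^{p-2}\,\dbE^1[|X_r|^2]\big]=\dbE^0\big[\dbE^1[|X_r|^{p-2}]\cdot\dbE^1[|X_r|^2]\big]$ using that $\mathbf{w}_2(\cL^1_{X_r},\d_0)^2=\dbE^1[|X_r|^2]$ is $\cF^0$-measurable, and then applies H\"older in $\Omega^0$ with exponents $\tfrac{p}{p-2},\tfrac p2$ followed by Jensen in $\Omega^1$; this verifies only the expected inequality \eqref{key}, which---as the paper observes just before \textbf{(A4)$_p$}---is all that Proposition~\ref{Pro-SDE-solu}(ii) actually uses. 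Both routes land on the same rate $\l_1+\l_2=-\overline\k_p+O(\e)$; your argument is a touch more elementary and yields \textbf{(A3)} proper, while the paper's exploits the product structure $\Omega=\Omega^0\times\Omega^1$ more explicitly.
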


\begin{proof} Taking $ V (x,\iota)=|x|^p$,
 from Remark \ref{Re-con-SDE}, for any $\e>0$, we get
$$\ba{ll}
\ns\ds \dbE\big[[\sL  V ] (r, X_r, \cL^1_{X_r},\a_r)\big]\\
\ns\ds \les \dbE\[ \frac {p} 2 | X_r|^{p-2}   \big((\k_x+3\e)|X_r |^2+(\k_{x\mu} +2\e)|\mathbf{w}_{2}(\cL^1_{X_r},\d_0)|^{2}  +    C_{p,\e}| \G(r,0,\d_0,\a_r)|^2  \big)  \]\\
\ns\ds \les \frac {p(\k_x+3\e)} 2  \dbE  \big[| X_r|^{p} \big]+  \frac {p(\k_{x\mu} \textbf{1}_{\{\k_{x\mu} >0\}}+2\e)} 2
\dbE\[ | X_r|^{p-2}  \cd \dbE^1[|X_r |^{2} ]\] +C_{p,\e}\dbE \big[  | X_r|^{p-2}   | \G(r,0,\d_0,\a_r)|^2 \big] \\
\ns\ds \les \frac {p(\k_x+4\e)} 2  \dbE  \big[| X_r|^{p} \big]+  \frac {p(\k_{x\mu} \textbf{1}_{\{\k_{x\mu} >0\}}+2\e)} 2
\dbE^0\[ \dbE^1[| X_r|^{p-2} ] \cd \dbE^1[|X_r |^{2} ]\] +C_{p,\e} \dbE\big[| \G(r,0,\d_0,\a_r)|^p\big] \\
\ns\ds \les \frac {p(\k_x+4\e)} 2  \dbE  \big[| X_r|^{p} \big]+  \frac {p(\k_{x\mu} \textbf{1}_{\{\k_{x\mu} >0\}}+2\e)} 2
\(\dbE^0\[ (\dbE^1[| X_r|^{p-2} ])^{\frac {p}{p-2}}\]\)^\frac{p-2}{p} \cd \(\dbE^0\[ (\dbE^1[|X_r |^{2} ])^{\frac{p}{2}}\]\)^\frac 2p\\
\ns\q +C_{p,\e} \dbE\big[| \G(r,0,\d_0,\a_r)|^p\big]  \\
\ns\ds \les \frac {p(\k_x+4\e)} 2  \dbE  \big[| X_r|^{p} \big]+  \frac {p(\k_{x\mu} \textbf{1}_{\{\k_{x\mu}>0\}}+2\e)} 2
\(\dbE^0\[  \dbE^1[| X_r|^{p} ] \]\)^\frac{p-2}{p}\!\!\! \cd \! \(\dbE^0\[ \dbE^1[|X_r |^{p} ] \]\)^\frac 2p\\
\ns\q + C_{p,\e} \dbE\big[| \G(r,0,\d_0,\a_r)|^p\big]\\
%
\ns\ds \leq (-\overline \k_p+ 3\e  )   \dbE \big[| X_r|^{p} \big]
+C_{p,\e} \dbE\big[| \G(r,0,\d_0,\a_r)|^p\big],
\ea$$
  which    corresponds to \eqref{key}. When $\kappa<\overline{\k}_p  $, \eqref{L-p-SDE-1} follows from  Proposition \ref{Pro-SDE-solu} (ii). The stability estimate \eqref{SDE-Cont} can be proved using analogous arguments.
\end{proof}

\subsection{Infinite horizon conditional McKean-Vlasov    BSDEs with markovian switching}
This section focuses on the backward equation \eqref{equ-BSDE}.
The driver $  f  $ is assumed to satisfy the following conditions.
 \begin{description}
   \item[(B1)]    (i)  For any $(x, \theta, \imath) \in \mathbb{R}^{n}\times\sR \times\textbf{M} $
and $ \nu \in \mathscr{P} (\mathbb{R}^{n}\times\sR)  $, $f(\cdot,x,\theta, \nu,\imath)$
is  $\mathbb{F}$-progressively measurable;

(ii)   For all $\imath \in\textbf{M}$,
 $f(\cdot, \textbf{0}, \delta_\textbf{0}, \imath)\in L_{\mathbb{F}}^{2,  \kappa^{\ast}}(0, \infty ; \dbR^n  )$ for some
 $\kappa^{\ast} \in\mathbb{R}$;

(iii) There exist   nonnegative constants $ l_{f x},  l_{f y}, l_{f z}, l_{f  z^0}, l_{f \nu}$ such that for all $(\o,s, \imath) \in\Omega\times[0, \infty) \times \textbf{M}$, and $(x,\theta,\nu)$, $(\bar{x},\bar{\theta},\bar{\nu}) \in \mathbb{R}^{n} \times\sR \times \mathscr{P}_{2}(\mathbb{R}^n\times\sR)$,
$$\ba{ll}
\ns\ds |f(s, x,\theta, \nu, \imath)-f(s, \bar{x},\bar{\theta}, \bar{\nu}, \imath)| \\
\ns\ds \les   l_{fx}|x-\bar{x}|+ l_{fy}|y-\bar{y}|+ l_{fz}|z-\bar{z}|
+ l_{f z^0}| z^0-\bar{z}^0|+ l_{f \nu}\mathbf{w}_2(\nu, \bar{\nu}).
\ea$$

\item[(B2)]
There exist constants $ \kappa_{y},$ $\kappa_{y\nu} \in \mathbb{R}$ such that,
$$
\langle f(s,x,\th, \nu, \imath)-f(s, x, \bar{\th}, \bar{\nu}, \imath), y-\bar{y}\rangle
\les \kappa_{y} |y-\bar{y}|^{2} +\kappa_{y\nu}|\mathbf{w}_{2}(\nu  ,\bar\nu  ) |^2 ,
$$
holds for   all
$(s, \omega, \imath) \in[0, \infty) \times \Omega\times\textbf{M}$, $x\in\dbR^n$, $\th=(y,z, z^0),$ $\bar \th=(\bar y,z, z^0)\in \mathscr{R}$  and
$ \nu , \bar\nu \in   \mathscr{P}_{2}( \mathbb{R}^{n}\times\mathscr{R})$ that have identical marginal distributions  on 
$\dbR^n\times\dbR^{nd}\times \dbR^{nd_0}$    under the projection $(x,y,z,z^0)\mapsto (x,z,z^0)$.

 \end{description}

 For convenience in subsequent analysis, we define $\underline \kappa :=2(\kappa_{y}+ \kappa_{y\nu}\mathbf{1}_{\{\kappa_{y\nu}>0\}}+  l^2_{f x}+ l^2_{f z}+ l^2_{f  z^0}+ l^2_{f\nu})$.
Following the approach  in   \cite{WY-2021}, we first have the following result.

\bl {}\label{BSDE-Le}\sl
Suppose  the condition  {\bf{(B1)}} holds.  Let $(Y ,Z , Z^0, K )\in \cL_{\mathbb{F}}^{2, \kappa}(t, \infty)$ be  the  solution  to \eqref{equ-BSDE},
then   $\lim\limits_{T\to\i}\dbE\big[\mathrm e^{\k T} |Y_T|^2\big]=0$.
\el

We now present an a priori estimate for BSDE  \eqref{equ-BSDE}.

\begin{proposition}\label{Pr-BSDE-1}\sl
Assume  {\bf{(A1)}, \bf{(A2)}$_2$, \bf{(A4)}$_2$, \bf{(B1)}, \bf{(B2)}} hold.  Let $ \k\in ( \underline{\k}, \overline{\k})   $ and $(Y, Z, Z^0 , K)\in \cL_{\mathbb{F}}^{2, \kappa}(t, \infty)$ be the solution of the conditional McKean-Vlasov BSDE \eqref{equ-BSDE} with
 $(t, x_t,\iota)\in \sD^2$. Then  there exists a constant
$
C>0$ such that
\bel{L-BSDE-1}
\begin{aligned}
&\dbE\[ \mathrm e^{\k t}|Y_t|^2+ \int_{t}^{\i} \mathrm e^{\k s}  |\Th_s|^{2}  \mathrm ds +\int_t^\i \mathrm e^{\k s}|K_s|^{2 }\circ\mathrm d[M]_s\]\\
&\les  C \dbE\[ |x_t|^2+ \int_{t}^{\i}\mathrm e^{\kappa^{\ast} s }\big(| \G(s,0,\d_0,\alpha_s)|^2 + |f(s,{\bf{0}},\d_{\bf{0}},\alpha_s)|^{2}\big)\mathrm ds  \].
\end{aligned}
\ee
Furthermore, under the conditions in Corollary  \ref{Cor-SDE-1},  if  $(\bar{Y}, \bar{Z}, \bar{Z^0}, \bar{K}) \in \cL_{\mathbb{F}}^{2, \kappa}(t, \infty)$ is the solution of \eqref{equ-BSDE}  associated with $\bar{X}$ and another coefficient  $\bar{f}$  satisfying {\bf{(B1)}} and {\bf{(B2)}}, then
\bel{L-BSDE-2}
\begin{aligned}
&\dbE\[ \mathrm e^{\k t} |Y_t-\bar{Y}_t|^2
+  \int_{t}^{\infty} \mathrm e^{\k  s}|\Th_s-\bar{\Th}_s|^{2}\mathrm ds
+\int_{t}^{\infty} \mathrm e^{\k  s}|K_s-\bar{K}_s|^{2 }\circ\mathrm d[M]_s\]\\
&\les C\dbE\[\mathrm e^{\kappa t}|x_t-\bar{x}_t|^2
+ \int_t^\infty \mathrm e^{\kappa^{\ast} s}\big(|\G(s,\bar{X}_s,\mathcal{L}^1_{\bar{X}_s},\alpha_s)
-\bar{\G}(s,\bar{X}_s,\mathcal{L}^1_{\bar{X}_s},\alpha_s)|^2\\
&\hskip 4cm+ |f(s,\bar{X}_s,\bar{\Theta}_s,\mathcal{L}^1_{(\bar{X}_s,\bar{\Th}_s )},\alpha_s)
-\bar{f}(s,\bar{X}_s,\bar{\Theta}_s,\mathcal{L}^1_{(\bar{X}_s,\bar{\Th}_s )},\alpha_s)|^{2}\big)\mathrm ds\].
\end{aligned}
\ee

\end{proposition}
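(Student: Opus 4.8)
The plan is to run the standard weighted-energy argument for the BSDE, applying the generalized It\^o formula adapted to the Markov chain and closing the estimate by exploiting the strict spectral gap $\k\in(\underline\k,\overline\k)$ together with Lemma~\ref{BSDE-Le} and Corollary~\ref{Cor-SDE-1}. First I would apply the generalized It\^o formula to $\mathrm e^{\k s}|Y_s|^2$ on $[t,T]$ and take expectations. Since the integrals against $W$, $W^0$ and the compensated chain martingale $M$ are square-integrable martingales, they drop out in expectation, while the quadratic-variation contributions produce the terms $\dbE\int_t^T\mathrm e^{\k s}(|Z_s|^2+|Z^0_s|^2)\mathrm ds$ and $\dbE\int_t^T\mathrm e^{\k s}|K_s|^2\circ\mathrm d[M]_s$. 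Writing $f_s:=f(s,X_s,\Th_s,\cL^1_{(X_s,\Th_s)},\a_s)$, rearranging gives
$$\ba{ll}
\ns\ds\dbE\big[\mathrm e^{\k t}|Y_t|^2\big]+\dbE\int_t^T\mathrm e^{\k s}\big(|Z_s|^2+|Z^0_s|^2\big)\mathrm ds+\dbE\int_t^T\mathrm e^{\k s}|K_s|^2\circ\mathrm d[M]_s\\
\ns\ds=\dbE\big[\mathrm e^{\k T}|Y_T|^2\big]+\dbE\int_t^T\mathrm e^{\k s}\big(2\langle Y_s,f_s\rangle-\k|Y_s|^2\big)\mathrm ds.
\ea$$

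The crux is to bound $2\langle Y_s,f_s\rangle$. I would freeze the $y$-argument at $0$ by setting $\hat\nu_s:=\cL^1_{(X_s,0,Z_s,Z^0_s)}$, which shares with $\cL^1_{(X_s,\Th_s)}$ the same marginal under the projection $(x,y,z,z^0)\mapsto(x,z,z^0)$; hence {\bf(B2)} applies and yields $\langle f_s-f(s,X_s,0,Z_s,Z^0_s,\hat\nu_s,\a_s),Y_s\rangle\les\k_y|Y_s|^2+\k_{y\nu}\mathbf{w}_2(\cL^1_{(X_s,\Th_s)},\hat\nu_s)^2$, where the obvious coupling (keep $(X_s,Z_s,Z^0_s)$, send $Y_s\mapsto0$) gives $\mathbf{w}_2(\cL^1_{(X_s,\Th_s)},\hat\nu_s)^2\les\dbE^1[|Y_s|^2]$. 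The residual term $f(s,X_s,0,Z_s,Z^0_s,\hat\nu_s,\a_s)$ is then controlled by {\bf(B1)}-(iii) against $f(s,{\bf 0},\delta_{\bf 0},\a_s)$, using $\mathbf{w}_2(\hat\nu_s,\delta_{\bf 0})^2=\dbE^1[|X_s|^2+|Z_s|^2+|Z^0_s|^2]$. Applying Young's inequality to each Lipschitz contribution, taking full expectation so that $\dbE[\dbE^1[\cdot]]=\dbE[\cdot]$, and discarding $\k_{y\nu}\dbE^1[|Y_s|^2]$ when $\k_{y\nu}\les0$, the coefficient of $\dbE[|Y_s|^2]$ collects to precisely $\underline\k$. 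Here the strict gap $\k>\underline\k$ is used twice: to leave room to absorb the $Z,Z^0$-terms into the left side, and to make the net coefficient of $\dbE\int_t^T\mathrm e^{\k s}|Y_s|^2\mathrm ds$ strictly positive.

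Finally I would let $T\to\i$. Lemma~\ref{BSDE-Le} kills the boundary term $\dbE[\mathrm e^{\k T}|Y_T|^2]\to0$, leaving all four left-hand contributions with positive coefficients and a right-hand side dominated by $\dbE\int_t^\i\mathrm e^{\k s}(|X_s|^2+|f(s,{\bf 0},\delta_{\bf 0},\a_s)|^2)\mathrm ds$. Since $\k<\overline\k=\overline\k_2\les\k^\ast$, Corollary~\ref{Cor-SDE-1}(ii) bounds the $X$-term by $|x_t|^2$ and $|\G(s,0,\delta_0,\a_s)|^2$, and $\mathrm e^{\k s}\les\mathrm e^{\k^\ast s}$ converts the $f$-term to the stated $\k^\ast$-weighted form; this is exactly \eqref{L-BSDE-1}. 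For the stability estimate \eqref{L-BSDE-2} I would repeat the argument on the difference $(Y-\bar Y,Z-\bar Z,Z^0-\bar Z^0,K-\bar K)$, which solves a BSDE whose driver splits as $[f(s,X_s,\Th_s,\cdots)-f(s,\bar X_s,\bar\Th_s,\cdots)]+[f-\bar f](s,\bar X_s,\bar\Th_s,\cdots)$: the first bracket is handled exactly as above (monotonicity in the $Y$-difference, Lipschitz in the $X,Z,Z^0$-differences), the second bracket plays the role of the free term, and the $X-\bar X$ contributions are absorbed via the SDE stability bound \eqref{SDE-Cont}.

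I expect the main obstacle to be the measure-theoretic bookkeeping in the use of {\bf(B2)}: constructing $\hat\nu_s$ with the prescribed marginals, justifying $\mathbf{w}_2(\cL^1_{(X_s,\Th_s)},\hat\nu_s)^2\les\dbE^1[|Y_s|^2]$ for the \emph{conditional} laws, and then tracking the Young weights so that the $|Y_s|^2$ coefficient lands precisely on $\underline\k$ while still leaving strict room to absorb the $Z,Z^0$-terms into the left-hand side. The jump part of the generalized It\^o formula, producing the $|K_s|^2\circ\mathrm d[M]_s$ term with the correct compensator, is a secondary technical point.
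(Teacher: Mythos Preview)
Your proposal is correct and follows essentially the same route as the paper's own proof: apply the generalized It\^o formula to $\mathrm e^{\k s}|Y_s|^2$, use {\bf(B2)} with the auxiliary law $\hat\nu_s=\cL^1_{(X_s,0,Z_s,Z^0_s)}$ (the paper writes this step compactly as $\k_{y\nu}|\mathbf{w}_2(\cL^1_{Y_s},\d_0)|^2$), estimate the residual via {\bf(B1)}-(iii) and Young's inequality so that the $|Y_s|^2$ coefficient lands on $\underline\k$ up to an $\e$, absorb the $X$-term through Corollary~\ref{Cor-SDE-1}, and pass to $T\to\i$ with Lemma~\ref{BSDE-Le}. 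The stability estimate is likewise handled identically.
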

\begin{proof}

For any $T \in(t, \infty)$, $\kappa \les \kappa^{\ast}$ and $\e>0$, applying the generalized It\^o's formula to $ \mathrm e^{\kappa s}|Y_{s}|^2$, we get
$$
\begin{aligned}
&\dbE\[ \mathrm e^{\k t}|Y_t|^2+\int_{t}^{T} \mathrm e^{\k s} (\kappa |Y_s|^{2}+ |Z_s|^{2} +|Z^0_s|^{2}) \mathrm ds + \int_{t}^{T} \mathrm e^{\k  s} |K_s|^{2 }\circ\mathrm d[M]_s\]\\
&=  \dbE\[\mathrm e^{\k T} |Y_T|^{2} +\int_{t}^{T}2 \mathrm e^{\k s} \langle Y_s, f(s, X_s, \Theta_s,\mathcal{L}^1_{(X_s,\Theta_s)}, \alpha_s)\rangle
\mathrm ds \]\\
&\les   \mathbb{E}\[\mathrm e^{\k T} |Y_T|^{2} +\int_{t}^{T}2\mathrm e^{\k s} \(\kappa_{y}|Y_s|^{2}
+ \kappa_{y\nu}|\mathbf{w}_{2}(\mathcal{L}^1_{Y_s } ,\d_0)|^{2}
+|Y_s|\cdot( l_{f x}|X_s|+  l_{f z}|Z_s|+  l_{f  z^0}|Z^0_s|)\\
&\hskip4.5cm
+|Y_s|\cdot  l_{f \nu}|\mathbf{w}_{2}(\mathcal{L}^1_{(X_s, 0,Z_s,Z^0_s)}, \delta_0)|
+|Y_s|\cdot|f(s,\textbf{0} ,\delta_\textbf{0},\alpha_s)|\)\mathrm ds\]\\
&\les  \dbE\[\mathrm e^{\k T} |Y_T|^{2} +\int_{t}^{T}\mathrm e^{\k s}  \((2\kappa_{y}+2 \kappa_{y\nu}\mathbf{1}_{\{\kappa_{y\nu}>0\}}+2\mathbf{l}_{f}^2+7 \varepsilon) |Y_s|^{2}
+c^{\nu, \varepsilon}_{x}|X_s|^2
+c^{\nu, \varepsilon}_{z}|Z_s|^{2}
+c^{\nu, \varepsilon}_{ z^0}|Z^0_s|^{2}\\
&\hskip4.5cm
+\frac{1}{\varepsilon}|f(s,\textbf{0},\d_\textbf{0},\alpha_s)|^{2}\)\mathrm ds\],
\end{aligned}
$$
where   $\mathbf{l}_{f}^2:=  l_{f x}^2+ l_{f z}^2+ l_{f  z^0}^2+  l_{f \nu}^2$ and
$c^{\nu, \varepsilon}_{\zeta}=\frac{  l^2_{f \zeta}}{2   l^2_{f \zeta}+\varepsilon}+\frac{ l^2_{f \nu}}{2 l^2_{f \nu}+\varepsilon}$ for $\zeta=x, z,  z^0$, resp.
In the above,  we have used the  Young inequality and \eqref{E1-inequ}, for example,
$$
\begin{aligned}
&\dbE\big[2  l_{f \nu}|Y_s|\cd|\mathbf{w}_{2}(\mathcal{L}^1_{(X_s, 0,Z_s,Z^0_s)}, \delta_0)|\big]
\les (2  l_{f \nu}^2+\varepsilon)\dbE\big[|Y_s|^2\big]+\frac{  l_{f \nu}^2}{2 l_{f \nu}^2+\varepsilon}\dbE\big[|\mathbf{w}_{2}(\mathcal{L}^1_{(X_s, 0,Z_s,Z^0_s)}, \delta_0)|^2\big]  \\
&
\les 
 (2  l_{f \nu}^2+\varepsilon)\dbE\big[|Y_s|^2\big]+\frac{  l_{f \nu}^2}{2 l_{f \nu}^2+\varepsilon}
 \mathbb{E}\big[|X_s|^2+|Z_s|^2+|Z^0_s|^2\big].
\end{aligned}
$$

Combined with  \eqref{L-p-SDE-1}-(ii) ($p=2$), we  get some constant $C_\e>0$ such that
$$
\begin{aligned}
&\dbE\[ \mathrm e^{\k t}|Y_t|^2+(\kappa-2\kappa_{y}-2 \kappa_{y\nu}\mathbf{1}_{\{\kappa_{y\nu}>0\}}-2\mathbf{l}_{f}^2-7 \varepsilon)\int_{t}^{T} \mathrm e^{\k s} |Y_s|^{2} \mathrm ds
+(1-c^{\nu, \varepsilon}_{z})\int_{t}^{T} \mathrm e^{\k s} |Z_s|^{2} \mathrm ds\\
&\q
+(1-c^{\nu, \varepsilon}_{ z^0})\int_{t}^{T} \mathrm e^{\k s} |Z^0_s|^{2} \mathrm ds
+ \int_{t}^{T} \mathrm e^{\k  s} |K_s|^{2 }\circ\mathrm d[M]_s\]\\
&\les  \dbE\[\mathrm e^{\k T} |Y_T|^2+C_\e |x_t|^2+ C_\e \int_{t}^{T}\mathrm e^{\kappa^{\ast} s }(| \G(s,0,\d_0,\alpha_s)|^2 + |f(s,\textbf{0},\d_0,\alpha_s)|^{2})\mathrm ds  \].
\end{aligned}
$$
Then, when  $\k\in  ( \underline{\k}, \overline{\k}) $, letting $T \rightarrow \infty$  yields  \eqref{L-BSDE-1}.
Finally,  \eqref{L-BSDE-2}  follows similarly by considering the difference between two solutions.
\end{proof}

\bt{Th-BSDE}\sl Suppose that {\bf{(A1)}, \bf{(A2)}$_2$, \bf{(A4)}$_2$, \bf{(B1)}, \bf{(B2)}} hold.    Then  the   conditional McKean-Vlasov BSDE \eqref{equ-BSDE} admits a unique solution $(Y, Z,Z^0 ,K) \in  \cL^{2, \k}_\dbF(t,\i) $ with  $ \k\in ( \underline{\k}, \overline{\k})   $.
\et

This result can be demonstrated by employing the methods used in \cite{Peng-Shi-2000} and \cite{WY-2021}, so we will not repeat the details here.

\section{Infinite horizon  conditional McKean-Vlasov   FBSDEs with markovian switching---fully coupled case}
\label{MKV-FBSDE}

This section is devoted to the study of the following infinite horizon conditional McKean-Vlasov FBSDEs with Markovian switching,
\begin{equation}\label{equ-FBSDE-1}
 \left\{
\begin{aligned}
&\! \mathrm dX_s\!=\!b(s, X_s,\Theta_s, \mathcal{L}^1_{(X_s,\Th _s)}, \alpha_s) \, \mathrm ds
\!+\! \sigma  (s, X_s,\Theta_s, \mathcal{L}^1_{(X_s,\Th _s)}, \alpha_s) \, \mathrm dW_s \\
 &\qq
\!+\! \widetilde{\sigma}    (s, X_s,\Theta_s, \mathcal{L}^1_{(X_s,\Th _s)}, \alpha_s) \, \mathrm d W^0_s , \ s\ges t,\\
&\! \mathrm dY_s\!= \!-f(s,X_s,\Theta_s, \mathcal{L}^1_{(X_s,\Th _s)}, \alpha_s) \, \mathrm ds
\!+\!  Z_{s}  \, \mathrm dW_s
\!+\!   Z^0_{s}  \, \mathrm d W^0_s
\!+\! K_s \circ\mathrm d M_s, \ s\ges t, \\
&\! X_t=\Phi(Y_t, \mathcal{L}^1_{Y_t}, \alpha_t), \quad
\alpha_t=\iota,
\end{aligned}
\right.
\end{equation}
where
the coefficients
$$
\begin{aligned}
&b:[0,\infty)\times\dbR^n\times \sR\times\sP(\dbR^n\times\sR)\times\textbf{M}\to\dbR^n,\\
&\si:[0,\infty)\times\dbR^n\times \sR\times\sP(\dbR^n\times\sR)\times\textbf{M}\to\dbR^{n d}, \\
&\widetilde{\sigma}:[0,\infty)\times\dbR^n\times \sR\times\sP(\dbR^n\times\sR)\times\textbf{M}\to\dbR^{n  d_0},\\
&f:[0,\infty)\times\dbR^n\times \sR\times\sP(\dbR^n\times\sR)\times\textbf{M}\to\dbR^{n}.
\end{aligned}
$$
Note that  the forward and backward equations \eqref{equ-FBSDE-1} are   fully coupled.
We assume the coefficients to  satisfy the following conditions.

\begin{description}
    \item[(C1)]
(i)  For any $(x,\theta, \nu, \imath) \in \dbR^n\times  \mathscr{R} \times \mathscr{P} (\dbR^n\times\sR)\times\textbf{M}$,
  $f(\cdot,x,\theta, \nu,\imath) $ and $\G (\cdot,x,\theta, \nu,\imath)$ are $\dbF$-progressively measurable;

(ii) For all $\imath \in\textbf{M}$,  $\Phi(0, \delta_0, \imath) \in L_{\mathcal{F}_{t}}^{2}(\Omega; \mathbb{R}^{n})$ and
 $  f(\cdot,{ \bf{0}},\delta_{ \bf{0}}, \imath) \in L_{\mathbb{F}}^{2,  \k^\ast}(0, \infty ; \dbR^n) $, $\G (\cdot,{ \bf{0}},\delta_{ \bf{0}}, \imath) \in L_{\mathbb{F}}^{2,  \k^\ast}(0, \infty ;  \sR )$ for some $\k^\ast\in\mathbb{R}$;

(iii)  There exist   nonnegative constants $ l_{ {\mathbf{h}} x},  l_{ {\mathbf{h}} y},  l_{ {\mathbf{h}} z}, l_{ {\mathbf{h}}  z^0} ,   l_{ {\mathbf{h}} \nu},  l_{\Phi x},  l_{\Phi \upsilon}$  such that,
 for all $(\o,s, \imath) \in\Omega\times[0, \infty) \times \textbf{M}$,  $(x,\theta,\nu)$, $(\bar x,\bar{\theta},\bar{\nu}) \in \mathbb{R}^{n}\times\sR \times \mathscr{P}_{2}(\mathbb{R}^n\times\sR)$ and
 $ \upsilon , $ $   \bar{ \upsilon} \in   \mathscr{P}_{2}(\mathbb{R}^n)$,
 $$\ba{ll}
 \ns\ds |\Phi(y,  \upsilon , \imath) - \Phi(\bar{y}, \bar{ \upsilon} , \imath)| \les   l_{\Phi y} |y-\bar y | +  l_{\Phi \upsilon} \mathbf{w}_{2}( \upsilon , \bar{ \upsilon} ),\\
 \ns\ds | {\mathbf{h}}(s, x,\theta, \nu, \imath)\!-\! { \mathbf{h}}(s, \bar{x},\bar \theta ,\bar{\nu}, \imath)|
\les \!  l_{ {\mathbf{h}}x}|x-\bar x  | \!+\!  l_{ {\mathbf{h}}y}| y-\bar y  |  \!+ \! l_{ {\mathbf{h}} z}| z-\bar  z |\!+\! l_{ {\mathbf{h}}  z^0}| z^0-\bar{ z^0} |\!+ \!l_{ {\mathbf{h}}\nu}\mathbf{w}_2(\nu, \bar{\nu})
 \ea$$
 hold, where $ {\mathbf{h}}=b,\si,\widetilde{\sigma}, f$, resp.

  \item[(C2)]
 There exist  constants  $\k_x, \k_{x\mu} , \kappa_{y}, \k_{y\nu}  \in \mathbb{R}$ such that, for all $s\ges t$ and $(X, \Theta ), (\bar{X}, \bar{\Theta} )\in L^{2}_{\mathcal{F}_{s}}(\O;\mathbb{R}^n\times\sR) $,
$$
 \ba{ll}
 \ns\ds { \rm (i)}\ \
 \dbE \[2\lan b(s,X,\Theta, \mathcal{L}^1_{(X, \Th )},\alpha_s )-b(s, \bar{X}, \Th, \mathcal{L}^1_{(\bar{X}, \Th )},\alpha_s),  X-\bar X  \ran\\
 \ns\ds\qq \q   +  |\si(s,X,\Theta, \mathcal{L}^1_{(X, \Th )},\alpha_s )-\si(s, \bar{X}, \Th, \mathcal{L}^1_{(\bar{X}, \Th )},\alpha_s)|^2 \\
 \ns\ds\qq \q + |\wt\si(s,X,\Theta, \mathcal{L}^1_{(X, \Th )},\alpha_s )-\wt\si(s, \bar{X}, \Th, \mathcal{L}^1_{(\bar{X}, \Th )},\alpha_s)|^2 \]   \\
 \ns\ds\qq
 \les (\k_x+\k_{x\mu} \mathbf{1}_{\{\k_{x\mu} >0\}})  \dbE[ |X-\bar X |^2 ],\\
 \ns\ds{  \rm (ii)}\    \dbE\Big[ \langle f(s, X,Y,Z,Z^0, \mathcal{L}^1_{(X, Y,Z,Z^0 )},\alpha_s)+f(s,  {X}, \bar{Y},   Z,Z^0 \mathcal{L}^1_{( {X}, \bar{Y},Z,Z^0)},\alpha_s), Y-\bar{Y}\rangle\Big]\\
 \ns\ds\qq   \les(\k_y+\k_{y\nu} \mathbf{1}_{\{\k_{x\nu} >0\}})  \dbE[ |Y-\bar Y |^2   ].
   \ea $$

   \item[(C3)]
There exist  nonnegative  constants $\a  , \b_1  ,\b_2$  and measurable Lipschitz continuous  functions
$$
\begin{aligned}
&\phi:\Omega \times\mathbb{R}^n\times\mathbb{R}^n\times\mathscr{P}_2(\mathbb{R}^{n})\times\mathscr{P}_2(\mathbb{R}^n)\times \textbf{M}\rightarrow [0, \infty),\\
&\psi(\cdot):\Omega\times [0, \infty)
\times\mathbb{R}^n\times\mathbb{R}^n\times\mathscr{P}_2(\mathbb{R}^{n})\times\mathscr{P}_2(\mathbb{R}^{n})\times \textbf{M}
\rightarrow [0, \infty),\\
&\varphi(\cdot):\Omega \times [0,\infty)\times\mathscr{R}\times\mathscr{R}\times\mathscr{P}_2(\mathscr{R})\times\mathscr{P}_2(\mathscr{R} )\times \textbf{M}\rightarrow [0, \infty),
\end{aligned}
$$
satisfying  $\phi({ \bf{0}},\delta_{ \bf{0}} ,\imath)\in L_{\mathcal{F}}^{1}(\Omega; \mathbb{R}^{n})$, $\psi(\cd, {\bf{0}} ,\delta_{ \bf{0}} ,\imath), \varphi(\cd, {\bf{0}} ,\delta_{ \bf{0}} ,\imath)\in L^{1,\kappa}_\dbF(0, \infty; \mathbb{R}) $ such that\\
 \no (i)   Case 1: $\b_1>0$, $\b_2=0$ or  Case 2: $\b_1=0$, $\b_2>0$;\\
 \ms
 \no (ii)  (Domination condition)
For  all $s\ges t$ and
$(X, \Theta ), (\bar{X}, \bar{\Theta} )
\in L^{2}_{\mathcal{F}_{s}}(\O;\mathbb{R}^n\times\sR) $,
$$
 \left\{
 \begin{aligned}
&\!\mathbb{E}\left[\big|\Phi(Y, \mathcal{L}^1_{Y},\alpha_s)
 - \Phi(\bar{Y}, \mathcal{L}^1_{\bar{Y}},\alpha_s)\big|^{2}\right]
 \les  \frac{1}{\b_1} \mathbb{E}\left[\phi(Y,\bar{Y}, \mathcal{L}^1_{Y},\mathcal{L}^1_{\bar{Y}},\alpha_s)\right], \\
&\!\mathbb{E}\left[\big|f(s,X, \Theta, \mathcal{L}^1_{(X,\Theta)},\alpha_s)
 -f(s, \bar{X}, \Th, \mathcal{L}^1_{(\bar{X}, \Th )},\alpha_s)\big|^{2}\right]
\les  \frac{1}{\b_2} \mathbb{E}\left[\psi(s, X, \bar{X}, \mathcal{L}^1_{X}, \mathcal{L}^1_{\bar{X}},\alpha_s)\right], \\
&\!\mathbb{E}\left[\big|\mathbf{h}(s, X,\Theta, \mathcal{L}^1_{(X, \Th )},\alpha_s)
 -\mathbf{h}(s, X, \bar{\Th},   \mathcal{L}^1_{(X, \bar{\Th} )},\alpha_s)\big|^{2}\right]
\les  \frac{1}{\b_1}  \mathbb{E}\left[\varphi(s, \Theta, \bar{\Theta}, \mathcal{L}^1_{\Theta}, \mathcal{L}^1_{\bar{\Theta}},\alpha_s)\right],
 \end{aligned}\right.
$$
where $\mathbf{h}=b, \sigma, \widetilde{\sigma}$, resp.

%
%
%

 \no (iii)  (Monotonicity condition)
%
%
For  all $s\ges t$ and
$(X, \Theta ), (\bar{X}, \bar{\Theta} )
\in L^{2}_{\mathcal{F}_{s}}(\O;\mathbb{R}^n\times\sR) $, we have
$${\mbox{Case 1:}} \
 \left\{
 \begin{aligned}
 &\!\mathbb{E}\left[\langle\Phi(Y, \mathcal{L}^1_{Y},\alpha_s)
 - \Phi(\bar{Y}, \mathcal{L}^1_{\bar{Y}},\alpha_s), Y-\bar{Y}\rangle\right]
 \les  -\b_1 \mathbb{E}\left[\phi(Y,\bar{Y}, \mathcal{L}^1_{Y},\mathcal{L}^1_{\bar{Y}},\alpha_s)\right], \\
 &\!\mathbb{E}\left[ \left\langle\!\!  \left(\begin{array}{ccc}
  \!\!\!-f(s, X,\Theta, \mathcal{L}^1_{(X, \Th )},\alpha_s)+f(s, \bar{X}, \bar{\Th},   \mathcal{L}^1_{(\bar{X}, \bar{\Th} )},\alpha_s)\!\!\!\\
  \!\!\!\Gamma(s, X,\Theta, \mathcal{L}^1_{(X, \Th )},\alpha_s) -\G(s, \bar{X}, \bar{\Th},   \mathcal{L}^1_{(\bar{X}, \bar{\Th} )},\alpha_s)\!\!\!
  \end{array} \right)\!
 ,\!\left( \begin{array}{ccc}
 \!\!\!\!X-\bar{X} \!\!\! \!\\
 \!\!\!\!\Th-\bar{\Th}  \!\!\!\!   \\
   \end{array} \right)\!\!
  \right\rangle\right]\\
  &+\(-\frac{\k_x}{2}+\k_{y}\) \mathbb{E}\left[ \langle X-\bar{X}, Y-\bar{Y} \rangle \right]\\
 &\!\les - \b_2 \mathbb{E}\left[\psi(s, X, \bar{X}, \mathcal{L}^1_{X}, \mathcal{L}^1_{\bar{X}},\alpha_s)\right]
 -\b_1 \mathbb{E}\left[\varphi(s, \Theta, \bar{\Theta}, \mathcal{L}^1_{\Th}, \mathcal{L}^1_{\bar{\Th}},\alpha_s)\right],
 \end{aligned}\right. $$
or
$${\mbox{Case 2:}} \
 \left\{
 \begin{aligned}
 &\!\mathbb{E}\left[\langle\Phi(Y, \mathcal{L}^1_{Y},\alpha_s)
 - \Phi(\bar{Y}, \mathcal{L}^1_{\bar{Y}},\alpha_s), Y-\bar{Y}\rangle\right]
 \ges  \b_1 \mathbb{E}[\phi(Y,\bar{Y}, \mathcal{L}^1_{Y},\mathcal{L}^1_{\bar{Y}},\alpha_s)] , \\
 &\!\mathbb{E}\left[ \left\langle\!\!  \left(\begin{array}{ccc}
  \!\!\!-f(s, X,\Theta, \mathcal{L}^1_{(X, \Th)},\alpha_s)+f(s, \bar{X}, \bar{\Th},   \mathcal{L}^1_{(\bar{X}, \bar{\Th} )},\alpha_s)\!\!\!\\
  \!\!\!\Gamma(s, X,\Theta, \mathcal{L}^1_{(X, \Th)},\alpha_s) -\G(s, \bar{X}, \bar{\Th},   \mathcal{L}^1_{(\bar{X}, \bar{\Th} )},\alpha_s)\!\!\!
  \end{array} \right)\!
 ,\!\left( \begin{array}{ccc}
 \!\!\!\!X-\bar{X} \!\!\! \!\\
 \!\!\!\!\Th-\bar{\Th} \!\!\!\!   \\
   \end{array} \right)\!\!
  \right\rangle\right]\\
 & +\(-\frac{\k_x}{2}+\k_{y}\) \mathbb{E}\left[ \langle X-\bar{X}, Y-\bar{Y} \rangle \right]\\
 &\!\ges  \b_2 \mathbb{E}\left[\psi(s, X, \bar{X}, \mathcal{L}^1_{X}, \mathcal{L}^1_{\bar{X}},\alpha_s)\right]
 +\b_1 \mathbb{E}\left[\varphi(s, \Theta, \bar{\Theta}, \mathcal{L}^1_{\Th}, \mathcal{L}^1_{\bar{\Th}},\alpha_s)\right].
 \end{aligned}\right.
 $$
\end{description}
Here, with a bit of abuse of notations, when $\b_1=0$ (resp. $\b_2=0$ ), $1 / \b_1$ (resp. $1 / \b_2)$ means $+\infty$.
 This means the corresponding domination constraints vanish when   $\b_1=0$ or $\b_2=0$.

\begin{remark}\label{ReFBSDE}\sl
In line with the conventional approach in the mean-field literature  (e.g., \cite{Bensoussan-Yam-Zhang-2015, Ahuja-Ren-Yang-2019, Tian-Yu}, etc.), our conditions are formulated in probabilistic terms, i.e., using random variables and their distributions, as this framework is more natural and better suited for the derivations carried out in the proofs. 
While this approach is natural for the derivations, we also present a version  of these conditions   in terms of state variables and measures to make the underlying structural relationships more explicit. We emphasize that this latter formulation ensures the validity of the probabilistic conditions. 

We first set   
$ \pi_x(x,\th ):=x,$ $\pi_y(x,\th ):=y, $ $
\pi_{x,z,z^0 }(x,\th ):=(x,z,z^0) ,
 $  $
\pi_{\th }(x,\th ):=\th 
 $ for $(x,\th)\in \dbR^n\times\sR.$
For all
$(\o,s, \imath) \in\Omega\times[0, \infty) \times \textbf{M}$,
all $(x,\theta,\nu)$, $(\bar x,\bar{\theta},\bar{\nu}) \in \mathbb{R}^{n}\times\sR \times\mathscr{P}_{2}(\mathbb{R}^n\times\sR)$,
$  \upsilon, \bar{\upsilon}\in \mathscr{P}_{2}(\mathbb{R}^n)$,   
$ 
   \mu_1,   \bar{\mu}_1\in \mathscr{P}_{2}(\mathbb{R}^n\times\sR)$  with $\mu_1\circ \pi_\th^{-1}=\bar\mu_1\circ \pi_\th^{-1}$,
$ \mu_2,   \bar{\mu}_2\in \mathscr{P}_{2}(\mathbb{R}^n\times\sR) $  with $\mu_2\circ \pi_x^{-1}=\bar\mu_2\circ \pi_x^{-1}$,
$  \nu_1,   \bar{\nu}_1\in \mathscr{P}_{2}(\mathbb{R}^n\times\sR) $  with $\nu_1\circ \pi_{x,z,z^0}^{-1}=\bar\nu_1\circ \pi_{x,z,z^0}^{-1} $,  the following holds, $\dbP$-a.s.,
%
 $$\ba{ll}
 \ns\ds {\textbf{\rm(C2)}}^ \prime \left\{\3n
 \ba{ll}
 \ns\ds { \rm (i)}\
2\lan b(s,x,\theta,\mu_1, \imath)-b(s,\bar x,\theta,\bar\mu_1 ,  \imath),  x-\bar x  \ran+  |\si(s,x,\th,\mu_1, \imath)-\si(s,\bar x,\th,\bar\mu_1, \imath)|^2 \\
 \ns\ds\qq  + |\widetilde{\sigma}(s,x, \theta, \mu_1,\imath)-\widetilde{\sigma}(s,\bar x,\theta,\bar\mu_1, \imath)|^2  \les \k_x|x-\bar x |^2+\k_{x\mu} |\mathbf{w}_{2}(\mu_1\circ\pi_x^{-1} ,\bar\mu_1\circ\pi_x^{-1} )|^{2} ,\\
 \ns\ds{  \rm (ii)}\    \langle f(s,x,y,z, z^0, \nu_1  , \imath)-f(s, x,  \bar{y}, z,  z^0 ,\bar\nu_1  ,   \imath ), y-\bar{y}\rangle\\
 \ns\ds\qq \les \k_{y} |y-\bar y |^{2} + \k_{y\nu} |\mathbf{w}_{2}(\nu_1\circ\pi_y^{-1} ,\bar \nu_1\circ\pi_y^{-1} )|^{2}  ;
   \ea \right. \\
\ns\ds {\textbf{\rm(C3)$^\prime $-(ii)}}\
 \left\{
 \ba{ll}
\ns\ds \!|\Phi(y,\upsilon,\imath)- \Phi(\bar y, \bar \upsilon, \imath) |^2
 \les  \frac{1}{\b_1} \phi(y,\bar y, \upsilon , \bar{\upsilon}, \imath ), \\
\ns\ds\!|f(s,x,\th,  \mu_1,\imath) -f(s, \bar x,  \th, \bar \mu_1, \imath)  |^{2} \les  \frac{1}{\b_2}
 \psi(s, x, \bar x, \mu_1\circ\pi_x^{-1}, \bar \mu_1\circ\pi_x^{-1}, \imath), \\
\ns\ds\!|\mathbf{h}(s,x,\th, \mu_2,\imath)
 -\mathbf{h}(s, x,\bar\th, \bar \mu_2,\imath)\big|^{2} \les  \frac{1}{\b_1}\varphi(s, \theta, \bar{\theta},\mu_2\circ\pi_{\th}^{-1},\bar{\mu}_2\circ\pi_{\th}^{-1}, \imath);
 \ea\right.
\ea$$
 and
$$\begin{aligned} &{\textbf{\rm(C3)$^\prime $-(iii)-Case 1}}\
 \left\{
 \begin{aligned}
 &\!\big\langle \Phi(y, \upsilon,\imath)
 - \Phi(\bar y, \bar \upsilon, \imath), y-\bar{y} \big\rangle
 \les -\b_1 \phi(y,\bar y,\upsilon , \bar{\upsilon }, \imath ), \\
 &\!\left\langle\2n  \left(\begin{array}{ccc}
               -f(s,x,\th,   \nu,\imath) +f(s, \bar x,  \bar{\th}, \bar \nu, \imath)\\
  \Gamma(s,x,\th,   \nu,\imath) -\G(s, \bar x,  \bar{\th}, \bar \nu, \imath)
  \end{array}\2n \right)
 ,\left(\2n \begin{array}{ccc}
 x - \bar{x}  \\
  \th - \bar{\th}    \\
   \end{array} \2n\right)\2n
  \right\rangle + \(\2n-\frac{\k_x}{2}+\k_{y}\)   \big\langle x - \bar{x}, y-\bar{y} \big\rangle\\
 &\!\les - \b_2\psi(s, x, \bar x, \nu\circ\pi_x^{-1}, \bar \nu\circ\pi_x^{-1}, \imath)
 - \b_1\varphi  (s, \theta, \bar{\theta}, \nu\circ\pi_{\th}^{-1},\bar{\nu}\circ\pi_{\th}^{-1}, \imath);
 \end{aligned}\right.\\
&{\textbf{\rm(C3)$^\prime $-(iii)-Case 2}}\
 \left\{
 \begin{aligned}
 &\! \big\langle \Phi(y, \upsilon,\imath)
 - \Phi(\bar y, \bar \upsilon, \imath), y-\bar{y} \big\rangle
 \ges \b_1 \phi(y,\bar y,\upsilon , \bar{\upsilon }, \imath ), \\
 &\!\left\langle\2n  \left(\begin{array}{ccc}
               -f(s,x,\th,   \nu,\imath) +f(s, \bar x,  \bar{\th}, \bar \nu, \imath)\\
  \Gamma(s,x,\th,   \nu,\imath) -\G(s, \bar x,  \bar{\th}, \bar \nu, \imath)
  \end{array}\2n \right)
 ,\left(\2n \begin{array}{ccc}
 x - \bar{x}  \\
  \th - \bar{\th}    \\
   \end{array} \2n\right)\2n
  \right\rangle  + \(\2n-\frac{\k_x}{2}+\k_{y}\)   \big\langle x - \bar{x}, y-\bar{y} \big\rangle\\
&\!\ges \b_2  \psi(s, x, \bar x, \nu\circ\pi_x^{-1}, \bar \nu\circ\pi_x^{-1}, \imath)
 + \b_1 \varphi(s, \theta, \bar{\theta}, \nu\circ\pi_{\th}^{-1},\bar{\nu}\circ\pi_{\th}^{-1}, \imath).
 \end{aligned}\right.
 \end{aligned}$$

\end{remark}

 \begin{remark}\sl   Assumption $\mathbf{(C3)}$ gives rise to  the following  four distinct combinations,
$$\ba{ll}
\ns\ds \mathbf{(C3.1)}: \ \mathbf{(C3)}\mbox{-(i)-Case 1},   \mathbf{(C3)}\mbox{-(ii), }  \mathbf{(C3)}\mbox{-(iii)-Case 1};\\
\ns\ds \mathbf{(C3.2)}:  \ \mathbf{(C3)}\mbox{-(i)-Case 2},   \mathbf{(C3)}\mbox{-(ii), }  \mathbf{(C3)}\mbox{-(iii)-Case 1};\\
\ns\ds \mathbf{(C3.3)}:  \ \mathbf{(C3)}\mbox{-(i)-Case 1},   \mathbf{(C3)}\mbox{-(ii), }  \mathbf{(C3)}\mbox{-(iii)-Case 2};\\
\ns\ds\mathbf{(C3.4)}:  \ \mathbf{(C3)}\mbox{-(i)-Case 2},   \mathbf{(C3)}\mbox{-(ii), }  \mathbf{(C3)}\mbox{-(iii)-Case 2}.\ea$$
The main results of this section will be established under $\mathbf{(C3.1)}$. The other cases can be handled similarly via analogous verification.
 \end{remark}

We now present the main result of this section.
\begin{theorem}\label{Th-FBSDE}\sl
 Suppose the coefficients $(\Phi, f, \G)$  satisfy Assumptions $\mathbf{(C1)}$, $\mathbf{(C2)}$ and $\mathbf{(C3.1)}$. Then there exists a constant  $\varrho > 0$
such that conditional   McKean-Vlasov FBSDEs \eqref{equ-FBSDE-1} admits a unique solution
$(X,\Theta, K)\in  L^{2, \kappa}_\dbF(t,\i;\dbR^n) \times \cL_{\mathbb{F}}^{2, \kappa}(t, \infty)$  for any $\k$ satisfying
\begin{equation}\label{equ-FBSDE-3}
\underline{\k}<\kappa<\overline{\k},
\quad -\frac{\k_x}{2}+\k_{y}   \les \kappa<  -\frac{\k_x}{2}+\k_{y}  +\varrho.
\end{equation}
Moreover, the following estimate holds,
\begin{equation}\label{equ-FBSDE-4}
\begin{aligned}
&\mathbb{E}\[\mathrm e^{\kappa t}|Y_t|^{2}
+\ds\int_{t}^{\infty}\mathrm e^{\kappa s}(|X_s|^{2}+|\Theta_s|^{2}) \mathrm ds
+\ds\int_{t}^{\infty}\mathrm e^{\kappa s}|K_s|^{2}\circ\mathrm d[M]_s\]\\
&\les  C \mathbb{E}\[\mathrm e^{\kappa t}|\Phi(0, \delta_0, \iota)|^{2}
+ \ds\int_{t}^{\infty}\mathrm e^{\kappa^{\ast} s}(|f(s, {\bf{0}}, \delta_{\bf{0}}, \alpha_{s})|^{2}
+|\Gamma(s, {\bf{0}}, \delta_{\bf{0}}, \alpha_{s})|^{2}) \mathrm ds\],
\end{aligned}
\end{equation}
where $C>0$ is a constant depending on the Lipschitz constants  $\k_x, \k_{x\mu}, \k_{y}, \k_{y\nu}$ and $\b_1$.
\smallskip

Furthermore, let
$(\bar{X},\bar{\Theta}, \bar{K})\in L^{2, \kappa}_\dbF(t,\i;\dbR^n) \times \cL_{\mathbb{F}}^{2, \kappa}(t, \infty)$ be  the solution to   FBSDEs \eqref{equ-FBSDE-1} associated  with  another triple of coefficients  $(\bar{\Phi}, \bar{f},\bar{\Gamma})$ satisfying $\mathbf{(C1)}$, $\mathbf{(C2)}$ and $\mathbf{(C3.1)}$.
Then,
\begin{equation}\label{equ-FBSDE-5}
\begin{aligned}
&\mathbb{E}\[\mathrm e^{\kappa t}|Y_t-\bar{Y}_t|^{2}
+\ds\int_{t}^{\infty}\mathrm e^{\kappa s}(|X_s-\bar{X}_s|^{2}+|\Theta_s-\bar{\Theta}_s|^{2}) \mathrm ds
+\ds\int_{t}^{\infty}\mathrm e^{\kappa s}|K_s-\bar{K}_s|^{2}\circ\mathrm d[M]_s\]\\
&\les   C \mathbb{E}\[\mathrm e^{\kappa t}|\Phi(\bar{Y}_t, \mathcal{L}^1_{\bar{Y}_t},\iota)
\! -\! \bar{\Phi}(\bar{Y}_t, \mathcal{L}^1_{\bar{Y}_t},\iota)|^{2}\\
&\qq \q   +  \ds\int_{t}^{\infty}\!\!\!\!\mathrm e^{\kappa^{\ast} s}\(|f(s, \bar{X}_{s}, \bar{\Theta}_{s}, \mathcal{L}^1_{(\bar{X}_{s},\bar{\Theta}_{s})}, \alpha_{s})
\! -\! \bar{f}(s, \bar{X}_{s},\bar{\Theta}_{s}, \mathcal{L}^1_{(\bar{X}_{s},\bar{\Theta}_{s})}, \alpha_{s})|^{2}\\
&\qq\qq\qq\qq\q+|\Gamma(s, \bar{X}_{s}, \bar{\Theta}_{s}, \mathcal{L}^1_{(\bar{X}_{s},\bar{\Theta}_{s})}, \alpha_{s})
 - \bar{\Gamma}(s, \bar{X}_{s},\bar{\Theta}_{s}, \mathcal{L}^1_{(\bar{X}_{s},\bar{\Theta}_{s})}, \alpha_{s})|^{2}\) \mathrm ds\],
\end{aligned}
\end{equation}
with the same constant   $C$  as in \eqref{equ-FBSDE-4}.

\end{theorem}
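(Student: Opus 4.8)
The plan is to obtain existence, uniqueness and the estimates \eqref{equ-FBSDE-4}--\eqref{equ-FBSDE-5} simultaneously by the \emph{method of continuity} of \cite{Hu-Peng-1995, Peng-Wu-1999}, treating the decoupled results of Corollary \ref{Cor-SDE-1}, Proposition \ref{Pr-BSDE-1} and Theorem \ref{Th-BSDE} as black boxes and letting the domination-monotonicity structure $\mathbf{(C3.1)}$ supply the \emph{uniform a priori estimate} that drives the continuation. First I would embed \eqref{equ-FBSDE-1} into a family indexed by $\rho\in[0,1]$, interpolating the coefficients between the target $(\Phi,f,\G)$ and a decoupled reference, together with arbitrary forcing data $(\xi,I_f,I_\G)$ in the relevant weighted spaces. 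A convenient reference already satisfying $\mathbf{(C3.1)}$ (with the same $\b_1$, and $\b_2=0$) is $\G_0\equiv0$, $f_0(s,x,\th,\nu,\imath)=(-\frac{\k_x}{2}+\k_y)y$ and $\Phi_0$ a suitable negative multiple of $y$: the forward dynamics are then trivial and the generator depends on $y$ only, so the $\rho=0$ system decouples and is solved by running Theorem \ref{Th-BSDE} first and feeding $Y_t$ into $X_t=\Phi_0(Y_t)+\xi$ and Corollary \ref{Cor-SDE-1}. Convex combinations preserve $\mathbf{(C1)}$, $\mathbf{(C2)}$, $\mathbf{(C3.1)}$, so the whole family satisfies the hypotheses uniformly in $\rho$.

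The core is a stability estimate uniform in $\rho$. For two solutions of the $\rho$-system with possibly different data, set $\Delta X=X-\bar X$, $\Delta Y=Y-\bar Y$, $\Delta\Th=\Th-\bar\Th$ and apply the generalized It\^o formula to $\mathrm e^{\k s}\langle\Delta X_s,\Delta Y_s\rangle$ on $[t,T]$. Since $\Delta X$ is continuous, the purely discontinuous term $\Delta K\circ\mathrm dM$ is orthogonal to it and drops out in expectation, while the continuous covariation yields exactly $\langle\Delta\si,\Delta Z\rangle+\langle\Delta\widetilde\si,\Delta Z^0\rangle$. Letting $T\to\infty$, the boundary term $\mathrm e^{\k T}\dbE\langle\Delta X_T,\Delta Y_T\rangle$ vanishes by Lemma \ref{BSDE-Le} and the decay estimate \eqref{L-p-SDE-1}. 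Writing $\k=(-\frac{\k_x}{2}+\k_y)+\delta_\k$ with $\delta_\k\in[0,\varrho)$ by \eqref{equ-FBSDE-3}, the remaining identity is precisely the left-hand side of the monotonicity inequality $\mathbf{(C3.1)}$-(iii) plus the perturbation $\delta_\k\dbE\int_t^\infty \mathrm e^{\k s}\langle\Delta X_s,\Delta Y_s\rangle\,\mathrm ds$; invoking $\mathbf{(C3.1)}$-(iii) (both the $\Phi$-part at $s=t$ and the generator part) gives, in Case 1,
$$\b_1\,\mathrm e^{\k t}\,\dbE\big[\phi(Y_t,\bar Y_t,\mathcal{L}^1_{Y_t},\mathcal{L}^1_{\bar Y_t},\a_t)\big]+\b_1\,\dbE\int_t^\infty \mathrm e^{\k s}\varphi_s\,\mathrm ds \les \delta_\k\,\dbE\int_t^\infty \mathrm e^{\k s}\langle\Delta X_s,\Delta Y_s\rangle\,\mathrm ds+R,$$
where $\varphi_s$ abbreviates $\varphi(s,\Th_s,\bar\Th_s,\mathcal{L}^1_{\Th_s},\mathcal{L}^1_{\bar\Th_s},\a_s)$ and $R$ collects the forcing contributions.

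Next I would turn these $\phi$- and $\varphi$-controls into norm bounds via the domination $\mathbf{(C3.1)}$-(ii): the initial datum obeys $\dbE|\Delta X_t|^2=\dbE|\Phi(Y_t)-\Phi(\bar Y_t)|^2\les\b_1^{-1}\dbE[\phi(Y_t,\bar Y_t,\mathcal{L}^1_{Y_t},\mathcal{L}^1_{\bar Y_t},\a_t)]$, and the $\Th$-difference of the forward coefficients is dominated by $\b_1^{-1}\varphi$. Using the former as initial datum and the latter as coefficient perturbation in the forward stability estimate \eqref{SDE-Cont} bounds $\dbE\int_t^\infty\mathrm e^{\k s}|\Delta X_s|^2\mathrm ds$; substituting this into the backward stability estimate \eqref{L-BSDE-2} bounds $\dbE[\mathrm e^{\k t}|\Delta Y_t|^2]+\dbE\int_t^\infty\mathrm e^{\k s}|\Delta\Th_s|^2\mathrm ds$ together with the $\Delta K$ term. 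The indefinite cross term is handled by $\delta_\k\langle\Delta X,\Delta Y\rangle\les\frac12\delta_\k(|\Delta X|^2+|\Delta Y|^2)$ and absorbed into the left-hand sides once $\varrho$ is small relative to $\b_1$ and the constants of \eqref{SDE-Cont}, \eqref{L-BSDE-2}. This closes the loop: forcing zero gives uniqueness, $(\bar\Phi,\bar f,\bar\G)=(\Phi,f,\G)$ with different data gives \eqref{equ-FBSDE-5}, and $(\bar X,\bar\Th,\bar K)\equiv0$ gives \eqref{equ-FBSDE-4}.

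With this estimate uniform in $\rho$, the bridging step is routine: if the $\rho_0$-system is solvable for all data, then for $\rho\in[\rho_0,\rho_0+\delta_0]$ one rewrites the $\rho$-system as a $\rho_0$-system whose data depend on a frozen iterate and shows that the induced self-map of $L^{2,\k}_{\dbF}(t,\infty;\dbR^n)\times\cL^{2,\k}_\dbF(t,\infty)$ is a contraction of modulus $\les C\delta_0$, with $C$ coming from the a priori estimate; hence $\delta_0$ is uniform and finitely many steps reach $\rho=1$. I expect the main obstacle to be the closing of the a priori estimate: the forward bound needs $\Delta\Th$, the backward bound needs $\Delta X$, and the monotonicity controls only $\phi,\varphi$ up to the indefinite term $\delta_\k\langle\Delta X,\Delta Y\rangle$, so breaking this circularity is exactly what forces $\varrho$ small and requires the window \eqref{equ-FBSDE-3} to be nonempty inside $(\underline\k,\overline\k)$. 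The remaining points --- transversality at infinity, orthogonality of $K\circ\mathrm dM$, and the conditional-law terms controlled through \eqref{E1-inequ} --- are routine bookkeeping.
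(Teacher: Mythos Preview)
Your proposal is correct and follows essentially the same approach as the paper: method of continuation with an interpolated family, the key uniform a priori estimate obtained by applying It\^o's formula to $\mathrm e^{\kappa s}\langle\Delta X_s,\Delta Y_s\rangle$, invoking the monotonicity $\mathbf{(C3.1)}$-(iii) to control $\phi,\varphi$, feeding these through the domination $\mathbf{(C3.1)}$-(ii) into the decoupled forward estimate \eqref{SDE-Cont} and backward estimate \eqref{L-BSDE-2}, and finally absorbing the indefinite cross term $\delta_\kappa\langle\Delta X,\Delta Y\rangle$ by choosing $\varrho$ small. The only notable difference is the choice of reference system: the paper takes $\Phi^0=0$, $b^0(x)=\tfrac{\kappa_x}{2}x$, $\sigma^0=\widetilde\sigma^0=0$, $f^0(y)=\kappa_y y$, which makes the $\lambda=0$ system fully decoupled (forward and backward solved independently) and the $\lambda$-scaled monotonicity computation cleaner, whereas your reference ($\Gamma_0\equiv0$, $f_0=(-\tfrac{\kappa_x}{2}+\kappa_y)y$, $\Phi_0$ a negative multiple of $y$) requires solving the BSDE first and carrying an extra $(1-\rho)c|\Delta Y_t|^2$ term through the estimates; also, your phrasing ``convex combinations preserve $\mathbf{(C3.1)}$'' is slightly loose since the reference does not literally satisfy $\mathbf{(C3.1)}$ with the \emph{same} $\phi,\varphi$ --- what actually happens (and what your detailed derivation correctly does) is that the $\rho$-scaled original coefficients contribute $\rho$ times the monotonicity bound while the reference contributes a separately controllable term. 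Both choices lead to the same closed estimate, so this is a matter of bookkeeping rather than substance.
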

 Theorem \ref{Th-FBSDE} will be proved by the method of continuation. To this end, we introduce a family of auxiliary equations parameterized by
  $\lambda\in[0, 1]$. Precisely, for  any $(\xi_0, f_0, {\Gamma}_0)\in L^2_{\cF_t}(\Omega;\dbR^n)\times {L}^{2, \kappa^{\ast}}_\dbF(0, \infty;\dbR^n\times\sR)$,  consider
\begin{equation}\label{equ-FBSDE-6}
\left\{\1n
\begin{aligned}
&\!\mathrm dX^{\lambda}_{s}= \!\left[b^{\lambda}(s, X^{\lambda}_{s}, \Theta^{\lambda}_{s}, \mathcal{L}^1_{(X^{\lambda}_{s}, \Theta^{\lambda}_{s})}, \alpha_s)+  b_0(s)\right] \1n \mathrm ds
+ \left[\sigma^{\lambda}(s, X^{\lambda}_{s}, \Theta^{\lambda}_{s}, \mathcal{L}^1_{(X^{\lambda}_{s},\Theta^{\lambda}_{s})}, \alpha_s) + \sigma_0(s)\right]\1n \mathrm dW_{s}\\
&\hskip1.1cm
+\left[\widetilde{\sigma}^{\lambda}(s, X^{\lambda}_{s}, \Theta^{\lambda}_{s}, \mathcal{L}^1_{(X^{\lambda}_{s},\Theta^{\lambda}_{s})}, \alpha_s) +  \widetilde{\sigma}_0(s)\right]   \mathrm d W^0_{s}, \ s\ges t, \\
%
&\!\mathrm dY^{\lambda}_{s}\! = \!- \left[f^{\lambda}(s, X^{\lambda}_{s}, \Theta^{\lambda}_{s}, \mathcal{L}^1_{(X^{\lambda}_{s},\Theta^{\lambda}_{s})}, \alpha_s) +f_0(s)\right]\1n  \mathrm ds
+Z_{s}^{\lambda}  \mathrm dW_{s}+ Z^{0 \lambda}_{s} \mathrm d W^0_{s}
+ K_{s}^{\lambda} \circ\mathrm d M_s, \ s\ges t,  \\
&\!X^{\lambda}_{t} = \Phi^{\lambda}(Y^{\lambda}_{t}, \mathcal{L}^1_{Y^{\lambda}_{t}},\alpha_t)+\xi_0, \quad
\alpha_{t}=\iota,
\end{aligned}\right.
\end{equation}
where  for all $(s, \imath) \in [0, \infty) \times \textbf{M}$,  $(x,\theta,\nu)\in \mathbb{R}^{n}\times\sR \times \mathscr{P}_{2}(\mathbb{R}^n\times\sR)$ and
 $(y, \upsilon)\in \mathbb{R}^{n}\times \mathscr{P}_{2}(\mathbb{R}^n)$,
$$
\left\{
\begin{aligned}
&\!\Phi^{\lambda}(y,\upsilon,\imath):=\lambda\Phi(y,\upsilon,\imath),\\
&\!b^{\lambda}(s,x,\theta,\nu,\imath):=\lambda b(s,x,\theta,\nu,\imath) +\frac{1}{2}(1-\lambda)\k_x  x,\\
&\!\sigma^{\lambda}(s,x,\theta,\nu,\imath):=\lambda \sigma(s,x,\theta,\nu,\imath),\\
&\!\widetilde{\sigma}^{\lambda}(s,x,\theta,\nu,\imath):=\lambda \widetilde{\sigma}(s,x,\theta,\nu,\imath),\\
&\!f^{\lambda}(s,x,\theta,\nu,\imath):=\lambda f(s,x,\theta,\nu,\imath) +(1-\lambda)\k_y  y.
\end{aligned}\right.
$$

 For any $\lambda \in[0, 1]$, we first establish the a priori estimate for McKean-Vlasov FBSDE \eqref{equ-FBSDE-6}, which  is key  when applying the continuation method.

\begin{proposition}\label{Pr-FBSDE-1}\sl

Let $(\Phi, f, \G)$  satisfy $\mathbf{(C1)}$, $\mathbf{(C2)}$,  $\mathbf{(C3.1)}$ and $\k\in (\underline{\k},\overline{\k})$. For any  $(\xi_0, f_0, \Gamma_0)$, $(\bar{\xi}_0, \bar{f}_0, \bar{\Gamma}_0) \in L^2_{\cF_t}(\O;\dbR^n)\times L_{\mathbb{F}}^{2,  \k^\ast}(0, \infty ; \dbR^n\times  \sR )   $, denote  $(X^{\lambda},\Theta^{\lambda},K^{\lambda})$, $(\bar{X}^{\lambda},\bar{\Theta}^{\lambda},\bar{K}^{\lambda})\in L^{2, \kappa}_\dbF(t,\i;\dbR^n) \times \cL_{\mathbb{F}}^{2, \kappa}(t, \infty)$ by the solution  to conditional   McKean-Vlasov FBSDEs \eqref{equ-FBSDE-6} associated with $(\Phi^{\lambda}+\xi_0, f^{\lambda}+f_0, \Gamma^{\lambda}+\Gamma_0 )$ and
$(\Phi^{\lambda}+\bar{\xi}_0, f^{\lambda}+\bar{f}_0, \Gamma^{\lambda}+\bar{\Gamma}_0)$, resp.
Then, there exists some constant $\varrho > 0$ such that, for all $\k\in [ -\frac{\k_x}{2}+\k_{y} , -\frac{\k_x}{2}+\k_{y} +\varrho) $  and $\l\in [0,1]$,
\begin{equation}\label{equ-FBSDE-10}
\begin{aligned}
&\mathbb{E}\[\mathrm e^{\kappa t}|Y^{\lambda}_t-\bar{Y}^{\lambda}_t|^{2}
+\ds\int_{t}^{\infty}\mathrm e^{\kappa s}(|X^{\lambda}_s-\bar{X}^{\lambda}_s|^{2}
+|\Theta^{\lambda}_s-\bar{\Theta}^{\lambda}_s|^{2}) \mathrm ds
+\ds\int_{t}^{\infty}\mathrm e^{\kappa s}|K^{\lambda}_s-\bar{K}^{\lambda}_s|^{2}\circ\mathrm d[M]_s\]\\
&\les  C\mathbb{E}\[\mathrm e^{\kappa t}|\h\xi_0 |^{2}
+\ds\int_{t}^{\infty}\mathrm e^{\kappa^{\ast} s}(|f_0(s)-\bar{f}_0(s)|^{2}+|\Gamma_0(s)-\bar{\Gamma}_0(s)|^{2}) \mathrm ds\],
\end{aligned}
\end{equation}
where $C>0$ is a constant depending on the Lipschitz constants  $\k_x, \k_{x\mu} $,  $\k_{y}, \k_{y\nu}$ and $\b_1$.
\end{proposition}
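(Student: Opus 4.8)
The plan is to work with the differences $\h X:=X^\l-\bar X^\l$, $\h Y:=Y^\l-\bar Y^\l$, $\h\Th:=\Th^\l-\bar\Th^\l$, $\h K:=K^\l-\bar K^\l$ and the data differences $\h\xi_0:=\xi_0-\bar\xi_0$, $\h f_0:=f_0-\bar f_0$, $\h\G_0:=\G_0-\bar\G_0$; set $\k_0:=-\frac{\k_x}{2}+\k_y$. Subtracting the two copies of \eqref{equ-FBSDE-6}, these differences solve a linear conditional McKean--Vlasov FBSDE whose initial coupling is $\h X_t=\l\big[\Phi(Y^\l_t,\mathcal{L}^1_{Y^\l_t},\iota)-\Phi(\bar Y^\l_t,\mathcal{L}^1_{\bar Y^\l_t},\iota)\big]+\h\xi_0$. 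I would also record that the interpolation in \eqref{equ-FBSDE-6} is arranged precisely so that $\G^\l$ and $f^\l$ obey the structural bounds of Section~3 with the \emph{same} constants $\k_x,\k_{x\mu},\k_y,\k_{y\nu}$ for every $\l\in[0,1]$ (for the forward part this uses $\l^2\les\l$ to dominate the diffusion squares); consequently the estimates \eqref{SDE-Cont} and \eqref{L-BSDE-2} apply uniformly in $\l$ on the range $\k\in(\underline\k,\overline\k)$.

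The central step is to apply the generalized It\^o formula to $\mathrm e^{\k s}\langle\h X_s,\h Y_s\rangle$ on $[t,T]$ and take expectations. The jump martingale $\h K\circ\mathrm dM$ does not correlate with the continuous $\h X$, so the bracket contributes only $\langle\h\si^\l+\h\si_0,\h Z_s\rangle+\langle\h{\wt\si}^\l+\h{\wt\si}_0,\h{Z^0}_s\rangle$; after collecting drift and bracket terms the $\l$-coefficients assemble into $\langle(-\h f^\l,\h\G^\l),(\h X_s,\h\Th_s)\rangle$ and the data into $\langle\h\G_0,\h\Th_s\rangle-\langle\h X_s,\h f_0\rangle$. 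The key algebraic identity
$$
\langle(-\h f^\l,\h\G^\l),(\h X_s,\h\Th_s)\rangle+\k_0\langle\h X_s,\h Y_s\rangle
=\l\Big(\langle(-\h f,\h\G),(\h X_s,\h\Th_s)\rangle+\k_0\langle\h X_s,\h Y_s\rangle\Big)
$$
then lets me invoke the monotonicity condition $\mathbf{(C3.1)}$-(iii) termwise: writing $\k=\k_0+(\k-\k_0)$, the expectation of the bracketed quantity is $\les-\l\b_1\dbE[\varphi]\les0$. Letting $T\to\infty$ annihilates the boundary term $\dbE[\mathrm e^{\k T}\langle\h X_T,\h Y_T\rangle]$, since Lemma~\ref{BSDE-Le} gives $\dbE[\mathrm e^{\k T}|\h Y_T|^2]\to0$ while $\h X\in L^{2,\k}_\dbF$ forces $\liminf_T\dbE[\mathrm e^{\k T}|\h X_T|^2]=0$, and Cauchy--Schwarz kills the product along a subsequence. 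Combined with the $\Phi$-monotonicity applied to $\dbE[\mathrm e^{\k t}\langle\h X_t,\h Y_t\rangle]$, this produces a bound in which $\l\b_1\mathrm e^{\k t}\dbE[\phi]$ and $\l\b_1\dbE\int_t^\infty\mathrm e^{\k s}\varphi\,\mathrm ds$ are controlled by $(\k-\k_0)\dbE\int_t^\infty\mathrm e^{\k s}\langle\h X_s,\h Y_s\rangle\,\mathrm ds$ plus the data products. The domination condition $\mathbf{(C3.1)}$-(ii) then upgrades the left side into genuine control of $\dbE[|\Phi(Y^\l_t,\cdot)-\Phi(\bar Y^\l_t,\cdot)|^2]$ and of the $\Th$-sensitivities $\dbE\int_t^\infty\mathrm e^{\k s}|\mathbf h(s,\bar X^\l_s,\Th^\l_s,\cdot)-\mathbf h(s,\bar X^\l_s,\bar\Th^\l_s,\cdot)|^2\,\mathrm ds$ for $\mathbf h=b,\si,\wt\si$.

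Next I would feed these into the decoupled a priori estimates. The forward stability estimate \eqref{SDE-Cont} (with $p=2$) applied to $\h X$ has coefficient defect exactly equal to the just-controlled $\Th$-sensitivity of $\G$ together with $\h\G_0$, so it bounds $\dbE\int_t^\infty\mathrm e^{\k s}|\h X_s|^2\,\mathrm ds$ by $\dbE[\mathrm e^{\k t}|\h X_t|^2]$ plus data, where $|\h X_t|^2\les2\l^2|\Phi(Y^\l_t,\cdot)-\Phi(\bar Y^\l_t,\cdot)|^2+2|\h\xi_0|^2$ is already controlled. Symmetrically, the backward estimate \eqref{L-BSDE-2} bounds $\dbE[\mathrm e^{\k t}|\h Y_t|^2]+\dbE\int_t^\infty\mathrm e^{\k s}|\h\Th_s|^2\,\mathrm ds+\dbE\int_t^\infty\mathrm e^{\k s}|\h K_s|^2\circ\mathrm d[M]_s$ by the driver defect, which $\mathbf{(C1)}$-(iii) controls by $C\,\dbE\int_t^\infty\mathrm e^{\k s}|\h X_s|^2\,\mathrm ds$ plus $\dbE\int_t^\infty\mathrm e^{\k^\ast s}|\h f_0|^2\,\mathrm ds$.

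The final, and hardest, step is the absorption. By Young's inequality the prefactor $\k-\k_0\in[0,\varrho)$ multiplies $\dbE\int_t^\infty\mathrm e^{\k s}(|\h X_s|^2+|\h Y_s|^2)\,\mathrm ds$, and the data products likewise split into small multiples of $\dbE\int_t^\infty\mathrm e^{\k s}(|\h X_s|^2+|\h\Th_s|^2)\,\mathrm ds$ plus $\mathrm e^{\k^\ast s}$-weighted data (using $\mathrm e^{\k s}\les\mathrm e^{\k^\ast s}$ since $\k\les\k^\ast$). Substituting the forward and backward bounds turns the inequality of the second step into a self-referential estimate of schematic form $\mathsf N\les C_0(\k-\k_0)\mathsf N+C_1\,\mathsf{Data}$, where $\mathsf N$ is the entire left-hand side of \eqref{equ-FBSDE-10}. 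Choosing $\varrho>0$ so small that $C_0\varrho<\tfrac12$ lets me absorb the first term and conclude \eqref{equ-FBSDE-10}, with $C$ depending only on $\k_x,\k_{x\mu},\k_y,\k_{y\nu},\b_1$. I expect the genuine difficulty to lie exactly here: verifying that $C_0$, obtained by chaining \eqref{SDE-Cont}, \eqref{L-BSDE-2} and the domination bounds, is independent of $\l$ and of the data, so that a single radius $\varrho$ works for all $\l\in[0,1]$; the careful propagation of the conditional-law terms through \eqref{E1-inequ} is the book-keeping that makes this constant uniform.
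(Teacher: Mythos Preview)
Your proposal is correct and follows essentially the same approach as the paper: both proofs combine the It\^o formula applied to $\mathrm e^{\kappa s}\langle\h X_s,\h Y_s\rangle$ (exploiting the monotonicity condition $\mathbf{(C3.1)}$-(iii) and the algebraic identity for the $\lambda$-interpolated coefficients) with the decoupled SDE/BSDE stability estimates \eqref{SDE-Cont} and \eqref{L-BSDE-2}, and close via the domination condition $\mathbf{(C3.1)}$-(ii) and a smallness absorption on $\kappa-\kappa_0$. The only difference is the order of presentation: the paper first applies \eqref{SDE-Cont} and \eqref{L-BSDE-2} to bound the full norm by $K_4[\lambda\phi+\lambda\varphi+\text{data}]$ and \emph{then} uses the It\^o/monotonicity step to control $\lambda\beta_1[\phi+\varphi]$ by the $(\kappa-\kappa_0)$ cross term, substituting one into the other and choosing $\varrho=\frac{3\beta_1}{2K_4}$ explicitly; you do these two steps in the reverse order. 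Your worry about the $\lambda$-uniformity of $C_0$ is not a genuine obstacle: as in the paper, the factor $\lambda^2\les\lambda$ together with the fact that the constants in \eqref{SDE-Cont}, \eqref{L-BSDE-2} depend only on the structural parameters makes $K_4$ (hence $\varrho$) automatically independent of $\lambda$ and of the data.
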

\begin{proof}
For convenience, we set
$(\widehat{X}^{\lambda}, \widehat{Y}^{\lambda},\widehat{Z}^{\lambda},\widehat{ Z}^{0\lambda}, \widehat{K}^{\lambda}):=
(X^{\lambda},Y^{\lambda},Z^{\lambda}, Z^{0  \lambda}, K^{\lambda})-(\bar{X}^{\lambda},\bar{Y}^{\lambda},\bar{Z}^{\lambda},\bar{ Z}^{0  \lambda}, \bar{K}^{\lambda})$.
 For $\widehat{X}^{\lambda}$, applying   the estimate \eqref{SDE-Cont} with $p=2$, when $\k<\overline\k$, we obtain
\begin{equation}\label{equ-FBSDE-11}
\begin{aligned}
&\mathbb{E} \[\ds\int_{t}^{\infty}\mathrm e^{\kappa s }|\widehat{X}^{\lambda}_{s}|^{2} \mathrm ds\]
 %
   \les K_1\mathbb{E}\[\mathrm e^{\kappa t}\big|\lambda\widehat{\Phi}(t)   +  \h\xi_0 \big|^{2} +\int_{t}^{\infty}\mathrm e^{\kappa s }  | \l\widehat{\G}(s)+\widehat \G_0(s)|^2 \mathrm ds
  \],
\end{aligned}
\end{equation}
where $K_1$ is a constant and
$$
\left\{
\begin{aligned}
&\!\widehat{\Phi}(t):= \Phi(Y^{\lambda}_t, \mathcal{L}^1_{Y^{\lambda}_t}, \alpha_{t})
-\Phi(\bar{Y}^{\lambda}_{t}, \mathcal{L}^1_{\bar{Y}^{\lambda}_{t}}, \alpha_{t})  ,\\
&\!\widehat{ \mathbf{h}}(s)  := \mathbf{h}(s, \bar{X}^{\lambda}_{s}, \Theta^{\lambda}_{s}, \mathcal{L}^1_{(\bar{X}^{\lambda}_{s}, \Theta^{\lambda}_{s})},\alpha_{s})
-\mathbf{h}(s, \bar{X}^{\lambda}_{s}, \bar{\Theta}^{\lambda}_{s}, \mathcal{L}^1_{(\bar{X}^{\lambda}_{s}, \bar{\Theta}^{\lambda}_{s})}, \alpha_{s}),\\
&\! \widehat{\mathbf{h}}_0(s):=\mathbf{h}_0(s)-\bar{\mathbf{h}}_0(s) ,\q \h\xi_0:=\xi_0-\bar\xi_0,\\
&\h\Gamma(s):=( \h b^{\,\top}(s),\h \sigma^{\top}(s), \h{\widetilde{\sigma}  }^{\top}(s))^{\top},\q \h\Gamma_0(s):=( \h b^{\,\top}_0(s),\h \sigma^{\top}_0(s), \h{\widetilde{\sigma}  }_0^{\top}(s))^{\top},
\end{aligned}\right.
$$
with  $\mathbf{h}=b, \sigma, \widetilde{\sigma}$ and $\mathbf{h}_0=b_0, \sigma_0, \widetilde{\sigma}_0$, resp.

Then, applying   {\textbf{\rm(C3)-(ii)}} (or \rm(C3)$^\prime $-(ii)) to \rf{equ-FBSDE-11}, we get
\begin{equation}\label{equ-FBSDE-15}
\begin{aligned}
&\mathbb{E} \[\ds\int_{t}^{\infty}\mathrm e^{\kappa s }|\widehat{X}^{\lambda}_{s}|^{2} \mathrm ds\]\les K_{2} \mathbb{E}\[\mathrm e^{\kappa t}\(\lambda \phi(Y^{\lambda}_t,\bar{Y}^{\lambda}_t, \mathcal{L}^1_{Y^{\lambda}_t},\mathcal{L}^1_{\bar{Y}^{\lambda}_t},\alpha_t)+|\h\xi_0 |^{2}\)\\
&\qq\qq\qq\qq\qq\qq\
+\ds\int_{t}^{\infty}\!\! \mathrm e^{\kappa s}
\(\lambda  \varphi(s, \Theta^{\lambda}_s, \bar{\Theta}^{\lambda}_s, \mathcal{L}^1_{\Theta^{\lambda}_s}, \mathcal{L}^1_{\bar{\Theta}^{\lambda}_s},\alpha_s)+ |\widehat{\Gamma}_0(s)|^2\)\, \mathrm ds\],
\end{aligned}
\end{equation}
where $K_2>0$  depends on $\b_1$.

Next, for $( \widehat{Y}^{\lambda},\widehat{Z}^{\lambda},\widehat{ Z}^{0\lambda}, \widehat{K}^{\lambda})$,  we apply the  estimate \eqref{L-BSDE-2}. When $\k\in (\underline{\k},\overline{\k})$,
 \begin{equation}\label{equ-FBSDE-13}
 \begin{aligned}
 &\dbE\[ \mathrm e^{\k t} |\widehat{Y}^{\lambda}_t|^2
+ \int_{t}^{\infty} \mathrm e^{\k  s}|\widehat{\Theta}^{\lambda}_s|^{2}\mathrm ds
 +\int_{t}^{\infty} \mathrm e^{\k  s}|\widehat{K}^{\lambda}_s|^{2 }\circ\mathrm d[M]_s\]\\
 & \les  \mathbb{E}\[\mathrm e^{\kappa t}\big|\lambda\widehat{\Phi}(t)   +  \h\xi_0 \big|^{2} +\int_{t}^{\infty}\mathrm e^{\kappa s } \(| \l\widehat{\G}(s)+\widehat \G_0(s)|^2) +| \l\widehat{f}(s)+\widehat {f}_0(s)|^2\)\mathrm ds
  \] \\
&\les K_3  \mathbb{E}\[\mathrm e^{\kappa t}\(\!\lambda\phi(Y^{\lambda}_t,\bar{Y}^{\lambda}_t, \mathcal{L}^1_{Y^{\lambda}_t},\mathcal{L}^1_{\bar{Y}^{\lambda}_t},\alpha_t)\!+\!|\xi_0-\bar\xi_0|^{2}\!\)\\
&\hskip1.5cm
+\ds\int_{t}^{\infty} \!\!\!  \mathrm e^{\kappa s}
\(\lambda\varphi(s, \Theta^{\lambda}_s, \bar{\Theta}^{\lambda}_s, \mathcal{L}^1_{\Theta^{\lambda}_s}, \mathcal{L}^1_{\bar{\Theta}^{\lambda}_s},\alpha_s)+|\widehat{f}_0(s)|^{2}
+|\widehat{\Gamma}_0(s)|^{2}\)\, \mathrm ds \],
\end{aligned}
\end{equation}
where
$$
\left\{
\begin{aligned}
&\!\widehat{ f}(s)  := f(s, {X}^{\lambda}_{s}, \bar\Theta^{\lambda}_{s}, \mathcal{L}^1_{( {X}^{\lambda}_{s}, \bar\Theta^{\lambda}_{s})},\alpha_{s})
-f(s, \bar{X}^{\lambda}_{s}, \bar{\Theta}^{\lambda}_{s}, \mathcal{L}^1_{(\bar{X}^{\lambda}_{s}, \bar{\Theta}^{\lambda}_{s})}, \alpha_{s}),\\
&\! \widehat{f}_0(s):=f_0(s)-\bar{f}_0(s). \\
\end{aligned}\right.
$$
Combining \eqref{equ-FBSDE-15} and \eqref{equ-FBSDE-13}, we obtain
\begin{equation}\label{equ-FBSDE-17}
\begin{aligned}
&\dbE\[ \mathrm e^{\k t} |\widehat{Y}^{\lambda}_t|^2
\!+\!\int_{t}^{\infty} \mathrm e^{\k  s} (|\widehat{X}^{\lambda}_s|^{2}
+ |\widehat{\Theta}^{\lambda}_s|^{2})\mathrm ds
+\int_{t}^{\infty} \mathrm e^{\k  s}|\widehat{K}^{\lambda}_s|^{2 }\circ\mathrm d[M]_s\]\\
&\les \!K_{4} \mathbb{E}\[\mathrm e^{\kappa t}\(\lambda\phi(Y^{\lambda}_t,\bar{Y}^{\lambda}_t, \mathcal{L}^1_{Y^{\lambda}_t},\mathcal{L}^1_{\bar{Y}^{\lambda}_t},\alpha_t)\!+\!|\h\xi_0 |^{2}\!\)\\
&\hskip1.5cm
\!+\!\ds\int_{t}^{\infty}\!\!\!\! \mathrm e^{\kappa s}
\(\!\lambda\varphi(s, \Theta^{\lambda}_s, \bar{\Theta}^{\lambda}_s, \mathcal{L}^1_{\Theta^{\lambda}_s}, \mathcal{L}^1_{\bar{\Theta}^{\lambda}_s},\alpha_s)+|\widehat{f}_0(s)|^{2}\!+\!|\widehat{\Gamma}_0(s)|^{2}\!\) \, \mathrm ds\].
\end{aligned}
\end{equation}

Next, applying It\^o's formula to $ \mathrm e^{\kappa s}\langle \widehat{X}^{\lambda}_s, \widehat{Y}^{\lambda}_s \rangle$ on $ [t, T] $, we get
$$
\begin{aligned}
&\dbE\[\mathrm e^{\kappa T}\langle \widehat{X}^{\lambda}_{T}, \widehat{Y}^{\lambda}_{T} \rangle\]
=\dbE\[\mathrm e^{\kappa t}\langle  \lambda \h{\Phi}(t)+\widehat \xi_0 , \widehat{Y}^{\lambda}_{t} \rangle\\
&\qq\ +\ds\int_{t}^{T}\mathrm e^{\kappa s}\(\lambda \lan\G(s, X^{\lambda}_s,\Theta^{\lambda}_s, \mathcal{L}^1_{(X^{\lambda}_s,\Theta^{\lambda}_s)}, \alpha_s)
-\G(s, \bar{X}^{\lambda}_s,\bar{\Theta}^{\lambda}_s, \mathcal{L}^1_{(\bar{X}^{\lambda}_s,\bar{\Theta}^{\lambda}_s)}, \alpha_s) , \widehat{\Theta}^{\lambda}_{s}\rangle\\
&\qq\qq\qq\q-\lambda \langle f(s, X^{\lambda}_s,\Theta^{\lambda}_s, \mathcal{L}^1_{(X^{\lambda}_s,\Theta^{\lambda}_s)}, \alpha_s)
-f(s, \bar{X}^{\lambda}_s,\bar{\Theta}^{\lambda}_s, \mathcal{L}^1_{(\bar{X}^{\lambda}_s,\bar{\Theta}^{\lambda}_s)}, \alpha_s), \widehat{X}^{\lambda}_{s}\rangle\\
&\qq\qq\qq\q
+\big [\kappa-(1-\lambda)(-\frac{\kappa_{x}}{2}+\kappa_{y})\big] \langle \widehat{X}^{\lambda}_{s}, \widehat{Y}^{\lambda}_{s}\rangle
-\langle \widehat{f}_0(s), \widehat{X}^{\lambda}_{s}\rangle
+\langle \widehat{\Gamma}_0(s), \widehat{\Theta}^{\lambda}_{s}\rangle\)\mathrm ds\].
\end{aligned}
$$
Using {\rm(C3)-(iii)-Case 1}  (or \rm(C3)$^\prime $-(iii)-Case 1) and
$\lim\limits_{T \to \infty}\big | \mathbb{E}[\mathrm e^{\kappa T} \langle \widehat{X}^{\lambda}_T, \widehat{Y}^{\lambda}_T \rangle ]\big|=0 $, we obtain
\begin{equation}\label{equ-FBSDE-14}
\begin{aligned}
&\lambda \b_1 \dbE\[\mathrm e^{\kappa t} \phi(Y^{\lambda}_t,\bar{Y}^{\lambda}_t, \mathcal{L}^1_{Y^{\lambda}_t},\mathcal{L}^1_{\bar{Y}^{\lambda}_t},\alpha_t)
+\ds\int_{t}^{\infty} \mathrm e^{\kappa s} \varphi(s, \Theta^{\lambda}_s, \bar{\Theta}^{\lambda}_s, \mathcal{L}^1_{\Theta^{\lambda}_s}, \mathcal{L}^1_{\bar{\Theta}^{\lambda}_s},\alpha_s) \, \mathrm ds \] \\
& \les  \mathbb{E} \[\mathrm e^{\kappa t} \langle\h\xi_0 , \widehat{Y}^{\lambda}_{t}\rangle
+\ds\int_{t}^{\infty}\mathrm e^{\kappa s} \( (\kappa+\frac{\k_x}{2}-\k_{y}) \langle\widehat{X}^{\lambda}_{s}, \widehat{Y}^{\lambda}_{s}\rangle-\langle \widehat{f}_0(s), \widehat{X}^{\lambda}_{s}\rangle
+\langle \widehat{\Gamma}_0(s), \widehat{\Theta}^{\lambda}_{s}\rangle\)\mathrm ds\].
\end{aligned}
\end{equation}

Since $\kappa \ges -\frac{\k_x}{2}+\k_{y}$, substituting \eqref{equ-FBSDE-14} into \eqref{equ-FBSDE-17} leads to
\begin{equation}\label{equ-K-5}
\begin{aligned}
&\dbE\[ \mathrm e^{\k t} |\widehat{Y}^{\lambda}_t|^2
+\int_{t}^{\infty} \mathrm e^{\k  s} (|\widehat{X}^{\lambda}_s|^{2}+|\widehat{\Theta}^{\lambda}_s|^{2})\mathrm ds
+\int_{t}^{\infty} \mathrm e^{\k  s}|\widehat{K}^{\lambda}_s|^{2 }\circ\mathrm d[M]_s\]\\
&\les  \frac{K_{4}}{\b_1} \mathbb{E} \[\mathrm e^{\kappa t} \langle\h\xi_0, \widehat{Y}^{\lambda}_{t}\rangle
+\ds\int_{t}^{\infty}\mathrm e^{\kappa s} \( (\kappa+\frac{\k_x}{2}-\k_{y}) \langle\widehat{X}^{\lambda}_{s}, \widehat{Y}^{\lambda}_{s}\rangle-\langle \widehat{f}_0(s), \widehat{X}^{\lambda}_{s}\rangle
+\langle \widehat{\Gamma}_0(s), \widehat{\Theta}^{\lambda}_{s}\rangle\)\mathrm ds\]\\
&\q+K_{4} \mathbb{E}\[\mathrm e^{\kappa t} |\h\xi_0 |^{2}
\!+\!\ds\int_{t}^{\infty}  \mathrm e^{\kappa s}
\( |\widehat{f}_0(s)|^{2}\!+\!|\widehat{\Gamma}_0(s)|^{2}\!\) \, \mathrm ds\] \\
&\les   \mathbb{E}\[\frac{1}{2}\mathrm e^{\kappa t}|\widehat{Y}^{\lambda}_{t}|^{2}
+ (\frac{1}{4} + K_{5})\ds\int_{t}^{\infty}
\mathrm e^{\kappa s}\big(|\widehat{X}^{\lambda}_{s}|^{2}+|\widehat{\Theta}^{\lambda}_{s}|^{2}\big) \mathrm ds \]\\
&\qq \qq + {K_{6}} \mathbb{E}\[\mathrm e^{\kappa t}|\h\xi_0 |^{2}
+  \ds\int_{t}^{\infty}\mathrm e^{\kappa s}\big(|\widehat{f}_0(s)|^{2}+|\widehat{\Gamma}_0(s)|^{2}\big) \mathrm ds \],
\end{aligned}
\end{equation}
\no where
$K_{5}:= \frac{K_{4}( \kappa+\frac{\k_x}{2}-\k_{y}) } {2 \b_1}$ and
$K_{6}:=  K_{4}\left(1+\frac{K_{4}}{\b_1^{2}}\right)$.

Finally,  choosing $\ds\varrho=\frac{3\b_1}{2 K_{4}}$,  when $\kappa< -\frac{\k_x}{2}+\k_{y} +\varrho$, we get $K_5<\frac 34$.   Then the desired estimate \eqref{equ-FBSDE-10}  follows from \rf{equ-K-5}.
\end{proof}

With the help of the a priori estimate \eqref{equ-FBSDE-10}, we establish the following continuation  result.
\begin{lemma}\label{Le-FBSDE-2}\sl
Under the same conditions as   the ones in  Proposition \ref{Pr-FBSDE-1},
%
 there exists some constant $\delta_{0}>0$ independent of $\lambda$, such that
 if for $ \lambda_{0} \in [0, 1)$,  McKean-Vlasov FBSDEs \eqref{equ-FBSDE-6}
is uniquely solvable in $L^{2, \kappa}_\dbF(t,\i;\dbR^n) \times \cL_{\mathbb{F}}^{2, \kappa}(t, \infty) $ with any $(\xi_0, f_0, {\Gamma}_0)\in L^2_{\cF_t}(\O;\dbR^n)\times L_{\mathbb{F}}^{2,  \k^\ast}(0, \infty ; \dbR^n\times  \sR )$,
then the same holds for all   $\lambda \in (\lambda_{0}, (\lambda_{0}+\delta_{0})\wedge 1]$.
\end{lemma}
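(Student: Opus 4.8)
The plan is to run the method of continuity: assuming \eqref{equ-FBSDE-6} is uniquely solvable at $\l_0$, I solve it at $\l=\l_0+\d$ for a small $\d>0$ by a contraction mapping built on the a priori estimate \eqref{equ-FBSDE-10}. The starting point is the linear splitting of the interpolated coefficients,
$$
\Phi^{\l_0+\d}=\Phi^{\l_0}+\d\,\Phi,\q
b^{\l_0+\d}=b^{\l_0}+\d\big(b-\tfrac12\k_x x\big),\q
f^{\l_0+\d}=f^{\l_0}+\d\big(f-\k_y y\big),
$$
together with $\si^{\l_0+\d}=\si^{\l_0}+\d\,\si$ and $\widetilde\si^{\l_0+\d}=\widetilde\si^{\l_0}+\d\,\widetilde\si$; this lets me push the entire $\d$-increment into the inhomogeneous data of the $\l_0$-system, for which solvability is already granted.

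On the complete space $\cX:=L^{2,\k}_\dbF(t,\i;\dbR^n)\times\cL_\dbF^{2,\k}(t,\i)$, normed by the quantity appearing on the left of \eqref{equ-FBSDE-10}, I define a map $\cT$ as follows. Given $u'=(X',\Th',K')\in\cX$ with backward component $Y'$, I freeze the increment at $u'$ and let $u=\cT(u')$ be the unique solution of the $\l_0$-system \eqref{equ-FBSDE-6} carrying the data
$$
\wt\xi_0:=\xi_0+\d\,\Phi(Y'_t,\cL^1_{Y'_t},\a_t),\q
\wt f_0(s):=f_0(s)+\d\big(f(s,X'_s,\Th'_s,\cL^1_{(X'_s,\Th'_s)},\a_s)-\k_y Y'_s\big),
$$
and, componentwise, $\wt b_0=b_0+\d(b-\tfrac12\k_x x)$, $\wt\si_0=\si_0+\d\,\si$, $\wt{\widetilde\si}_0=\widetilde\si_0+\d\,\widetilde\si$, every coefficient being evaluated along $(s,X'_s,\Th'_s,\cL^1_{(X'_s,\Th'_s)},\a_s)$. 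By $\mathbf{(C1)}$ and $u'\in\cX$ this frozen data is admissible, so the induction hypothesis produces $u$ uniquely. A check of the three defining relations of \eqref{equ-FBSDE-6} shows that $u$ is a fixed point of $\cT$ precisely when it solves \eqref{equ-FBSDE-6} at $\l=\l_0+\d$ with the original data $(\xi_0,f_0,\G_0)$; hence it suffices to produce a unique fixed point.

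For the contraction I take two inputs $u',\bar u'$ and observe that $\cT(u')-\cT(\bar u')$ solves the $\l_0$-system with inhomogeneous data equal to the differences $\wt\xi_0-\bar{\wt\xi}_0$, $\wt f_0-\bar{\wt f}_0$, $\wt\G_0-\bar{\wt\G}_0$. Each of these carries the factor $\d$ and, by the Lipschitz bounds in $\mathbf{(C1)}$ together with \eqref{E1-inequ} for the measure arguments, is dominated pointwise by $\d$ times the increments of $u'-\bar u'$. Inserting this into \eqref{equ-FBSDE-10} gives
$$
\|\cT(u')-\cT(\bar u')\|^2\les C\,\d^2\,\|u'-\bar u'\|^2,
$$
where $C$ is the constant of \eqref{equ-FBSDE-10}, depending only on $\k_x,\k_{x\mu},\k_y,\k_{y\nu}$ and $\b_1$ and not on $\l_0$. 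I then fix once and for all $\d_0>0$ with $C\d_0^2<1$; for every $\d\in(0,\d_0]$ the map $\cT$ is a contraction on $\cX$, and Banach's fixed-point theorem delivers the unique fixed point, that is, the unique solution of \eqref{equ-FBSDE-6} at $\l_0+\d$. Because $\d_0$ was chosen independently of $\l_0$, the continuation step follows.

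The point I expect to require care is the bookkeeping of the two exponential weights. The displayed form of \eqref{equ-FBSDE-10} relaxes its additive data to the $\k^\ast$-weight, whereas the frozen increments above, being pure Lipschitz differences of coefficients evaluated at $u',\bar u'\in L^{2,\k}_\dbF$, are controlled only in the $\k$-weight. This is harmless: the proof of Proposition \ref{Pr-FBSDE-1} in fact bounds the additive data $\h f_0,\h\G_0$ in the solution weight $\mathrm e^{\k s}$ (cf. \eqref{equ-FBSDE-11} and \eqref{equ-FBSDE-13}), the passage to $\mathrm e^{\k^\ast s}$ being the trivial relaxation $\mathrm e^{\k s}\les\mathrm e^{\k^\ast s}$. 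Invoking the estimate in this $\k$-weighted form makes the input and output norms coincide and lets the contraction close.
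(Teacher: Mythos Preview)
Your proof is correct and follows essentially the same continuation argument as the paper: freeze the $\d$-increment as additional inhomogeneous data in the $\l_0$-system, apply the a priori estimate \eqref{equ-FBSDE-10} to obtain a contraction with rate $C\d^2$, and choose $\d_0$ from $C\d_0^2<1$. Your explicit remark on the $\k$ versus $\k^\ast$ weight is a valid observation that the paper passes over silently; one small bookkeeping point is that the paper writes the fixed-point space as $L^2_{\cF_t}(\O;\dbR^n)\times L^{2,\k}_\dbF\times\cL^{2,\k}_\dbF$, carrying $Y_t$ as a separate coordinate, whereas you leave it implicit in the norm---this matters because a generic element of $L^{2,\k}_\dbF$ has no well-defined value at the single time $t$, so when you form $\wt\xi_0=\xi_0+\d\,\Phi(Y'_t,\cL^1_{Y'_t},\a_t)$ you should regard $Y'_t$ as an independent variable in the space, exactly as the paper does.
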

\begin{proof}
Let $\delta_{0} >0$ be some constant to be  determined.
Suppose $ (  X,\Theta, K)\in  L^{2, \kappa}_\dbF(t,\i;\dbR^n) \times \cL_{\mathbb{F}}^{2, \kappa}(t, \infty)$ solves \eqref{equ-FBSDE-6} with $\l=\l_0$ and   $(\xi_0, f_0, {\Gamma}_0)\in L^2_{\cF_t}(\O;\dbR^n)\times L_{\mathbb{F}}^{2,  \k^\ast}(0, \infty ; \dbR^n\times  \sR )$.
For any $\delta \in [0, \delta_{0}\wedge(1-\lambda_{0})]$,
we consider  the following McKean-Vlasov FBSDEs,
\begin{equation}\label{equ-FBSDE-18}
\left\{\2n
\begin{aligned}
& \mathrm d\mathbf{X}  _{s}\!=\!\[b^{\lambda_{0}}(s, \mathbf{X}  _{s},\mathbf{\Theta}_{s},
\mathcal{L}^1_{(\mathbf{X} _{s},\mathbf{\Theta}_{s})}, \alpha_{s})+b_1(s)\]  \mathrm ds
%
\!+\!\[\sigma^{\lambda_{0}}(s, \mathbf{X} _{s},\mathbf{\Theta}_{s},
\mathcal{L}^1_{(\mathbf{X}  _{s},\mathbf{\Theta}_{s})}, \alpha_{s})
+ \sigma_1(s)\]  \mathrm dW_{s}\!\!\!\!\!\!\!\!\\
&\hskip1cm
+\!\[\widetilde{\sigma}^{\lambda_{0}}(s, \mathbf{X}  _{s},\mathbf{\Theta}_{s},
\mathcal{L}^1_{(\mathbf{X}  _{s},\mathbf{\Theta}_{s})}, \alpha_{s})
+\widetilde{\sigma} _1(s)\]  \mathrm d W^0 _{s},\ s \ges t,   \\
& \mathrm d\mathbf{Y}_s\!=\!-\[f^{\lambda_{0}}(s, \mathbf{X}  _{s},\mathbf{\Theta}_{s},
\mathcal{L}^1_{(\mathbf{X}_{s},\mathbf{\Theta}_{s})}, \alpha_{s})+f_1(s)\] \mathrm ds
+\mathbf{Z}_{s}\, \mathrm dW_{s}+\mathbf{Z}^0_{s}  \mathrm d W^0 _{s}
+\mathbf{K}_{s} \circ\mathrm d M_s, \ s \ges t, \\
& \mathbf{X}  _t=\Phi^{\lambda_{0}}(\mathbf{Y}_t, \mathcal{L}^1_{\mathbf{Y}_t},\alpha_t) + \xi_1,\quad
\alpha_t=\iota,
\end{aligned}\right.
\end{equation}
\no where
%
%
%
%
%
%
\begin{equation}\label{equ-FBSDE-19}
\left\{\2n
\begin{aligned}
& \xi_1:=\delta\Phi(Y_t, \mathcal{L}^1_{Y_t},\alpha_t)+ \xi_0,\\
& b_1(s):=\delta b(s,X_{s},\Theta_{s}, \mathcal{L}^1_{(X_{s},\Theta_{s})}, \alpha_s)
 -\frac{1}{2}\delta \k_x  X_{s}+b_0(s), \\
& \sigma_1(s):=\delta \sigma (s,X_{s},\Theta_{s}, \mathcal{L}^1_{(X_{s},\Theta_{s})}, \alpha_s)
+\sigma_0(s), \\
& \widetilde{\sigma}_1(s):=\delta \widetilde{\sigma} (s,X_{s},\Theta_{s}, \mathcal{L}^1_{(X_{s},\Theta_{s})}, \alpha_s)+\widetilde{\sigma}_0(s), \\
& f_1(s):=\delta f(s,X_{s},\Theta_{s}, \mathcal{L}^1_{(X_{s},\Theta_{s})}, \alpha_s)
 -\delta \k_{y}  Y_{s}+f_0(s). \\
\end{aligned}\right.
\end{equation}
Clearly $(\xi_1, f_1, {\Gamma}_1)\in L^2_{\cF_t}(\O;\dbR^n)\times L_{\mathbb{F}}^{2,  \k^\ast}(0, \infty ; \dbR^n\times  \sR )$, so \rf{equ-FBSDE-18} admits a unique solution $( \mathbf X ,\mathbf{\Theta}, \mathbf{K})\in L^{2, \kappa}_\dbF(t,\i;\dbR^n) \times \cL_{\mathbb{F}}^{2, \kappa}(t, \infty)$.
 This defines the following  mapping  from  $
L_{\mathcal{F}_{t}}^{2}(\Omega; \mathbb{R}^{n}) \times L^{2, \kappa}_\dbF(t,\i;\dbR^n) \times \cL_{\mathbb{F}}^{2, \kappa}(t, \infty)$ into itself,
$$
\mathcal{J}_{\lambda_{0} + \delta}(Y_t, X,\Theta, K)=(\mathbf{Y}_t, \mathbf X  ,\mathbf{\Theta}, \mathbf{K}) .
$$


Next, we show that   $\mathcal{J}_{\lambda_{0} + \delta}$ is contractive
for sufficiently small  $\delta$.
Take  any $(Y_t, X,$ $\Theta, K)$, $(\bar{Y}_t, \bar{X},\bar{\Theta}, \bar{K})\in
L_{\mathcal{F}_{t}}^{2}(\Omega; \mathbb{R}^{n})\times L^{2, \kappa}_\dbF(t,\i;\dbR^n) \times \cL_{\mathbb{F}}^{2, \kappa}(t, \infty)$
and let  $$(\mathbf{Y}_t, \mathbf X ,\mathbf{\Theta}, \mathbf{K}) = \mathcal{J}_{\lambda_{0}+\delta}(Y_t, X,\Theta,K),\q (\bar{\mathbf{Y}}_t, \bar{ \mathbf X  },\bar{\mathbf{\Theta}},\bar{\mathbf{K}}) = \mathcal{J}_{\lambda_{0}+\delta}(\bar{Y}_t, \bar{X},\bar{\Theta}, \bar{K}).$$
By the a priori estimate \eqref{equ-FBSDE-10}  and
 Lipschitz continuity of $(\Phi, f, \Gamma)$,  we have
$$
\begin{aligned}
&\mathbb{E}\[\mathrm e^{\kappa t}\big|\mathbf{Y}_{t}-\bar{\mathbf{Y}}_{t}\big|^{2}
+\ds\int_{t}^{\infty}\mathrm e^{\kappa s}
(\big| \mathbf X  _s-\bar{ \mathbf X }_s\big|^{2}+\big|\mathbf{\Theta}_s-\bar{\mathbf{\Theta}}_s\big|^{2})\mathrm ds
+\ds\int_{t}^{\infty}\mathrm e^{\kappa s}\big|\mathbf{K}_s-\bar{\mathbf{K}}_s\big|^{2}\circ\mathrm d[M]_s\]\\
%
&\les C \delta^{2} \mathbb{E}\[\mathrm e^{\kappa t}\big|Y_{t}-\bar{Y}_{t}\big|^{2}
+\ds\int_{t}^{\infty}\mathrm e^{\kappa s}
(\big|X_s-\bar{X}_s\big|^{2}+\big|\Theta_s-\bar{\Theta}_s\big|^{2})\mathrm ds
+\ds\int_{t}^{\infty}\mathrm e^{\kappa s}\big|K_s-\bar{K}_s\big|^{2}\circ\mathrm d[M]_s\],
\end{aligned}
$$
%
%
%
%
where the constant $C > 0$ is independent of $\lambda_{0}$ and $\delta$.
Choosing $\delta_{0}=\frac{1}{2 \sqrt{C}}$,  we see that for any  $\delta \in [0, \delta_{0}\wedge(1-\lambda_{0})]$,
the mapping $\mathcal{J}_{\lambda_{0}+\delta}$ is contractive. Hence, there exists a unique fixed point
$(\bar {\mathbf{Y}}_t, \bar { \mathbf X  },\bar{ \mathbf{\Theta}},\bar { \mathbf{K}}) \in
L_{\mathcal{F}_{t}}^{2}(\Omega; \mathbb{R}^{n}) \times L^{2, \kappa}_\dbF(t,\i;\dbR^n) \times \cL_{\mathbb{F}}^{2, \kappa}(t, \infty))$ such that
$$(\bar {\mathbf{Y}}_t, \bar { \mathbf X },\bar{ \mathbf{\Theta}},\bar { \mathbf{K}}) = \mathcal{J}_{\lambda_{0}+\delta}(\bar {\mathbf{Y}}_t, \bar { \mathbf X },\bar{ \mathbf{\Theta}},\bar { \mathbf{K}}),$$
and $(\bar { \mathbf X },\bar{ \mathbf{\Theta}},\bar { \mathbf{K}})$ is thereby the unique solution to McKean-Vlasov FBSDEs  \eqref{equ-FBSDE-6}  with $\lambda=\lambda_0+\delta$.
%
\end{proof}

Now, we give the   proof of  Theorem \ref{Th-FBSDE} as follows.

\begin{proof}[\rm\text{\textbf{Proof of Theorem \ref{Th-FBSDE}}}]

We first  note that
  \eqref{equ-FBSDE-3} implies  the condition $\kappa\in (2\kappa_y, \min\{-\kappa_x, \kappa^{\ast}\}) $.
Setting  $\lambda = 0$ in  \eqref{equ-FBSDE-6} yields
\begin{equation}\label{equ-FBSDE-6-0}
\left\{\2n
\begin{aligned}
& \mathrm dX^{0}_{s}= \big( \frac{1}{2}\k_x  X^{0}_{s} +  b_0(s)\big) \mathrm ds
+ \sigma_0(s) \mathrm dW_{s} +\widetilde{\sigma}_0(s) \mathrm d W^0_{s},
  \q s\ges t,\\
& \mathrm dY^{0}_{s}  =   -\big(\k_{y} Y^{0}_{s}+f_0(s)\big)\mathrm ds +Z_{s}^{0} \, \mathrm dW_{s}+ Z^{00}_{s}\, \mathrm d W^0_{s}+ K_{s}^{0} \circ\mathrm d M_s, \q s\ges t,  \\
& X^{0}_{t} = \xi_0, \quad
\alpha_{t}=\iota.
\end{aligned}\right.
\end{equation}
By Corollary  \ref{Cor-SDE-1}  and Theorem \ref{Th-BSDE}, for  $\kappa\in  (2\kappa_y, \min\{-\kappa_x, \kappa^{\ast}\}) $, the forward and backward equations in \rf{equ-FBSDE-6-0} can be solved separately
  for any    $(\xi_0, f_0, {\Gamma}_0)\in L^2_{\cF_t}(\O;\dbR^n)\times L_{\mathbb{F}}^{2,  \k^\ast}(0, \infty ; \dbR^n\times  \sR )$.
Subsequently,
 Lemma  \ref{Le-FBSDE-2} then implies that  the McKean-Vlasov FBSDEs \eqref{equ-FBSDE-6}  with $\l=1$ is uniquely solvable.
In particular, choosing $(\xi_0,f_0, \Gamma_0)=(0,0,0)$ reduces  system  \eqref{equ-FBSDE-6}  to the original FBSDEs \eqref{equ-FBSDE-1}, which therefore admits a unique solution.

Further, the estimate \eqref{equ-FBSDE-5} follows from \rf{equ-FBSDE-10} by taking $\lambda=1$, $(\xi_0, f_0,\Gamma_0)=(0, 0, 0)$
and
$$
\left\{
\begin{aligned}
&\!\bar{\xi}_0=\bar{\Phi}(\bar{Y}_{t}, \mathcal{L}^1_{\bar{Y}_{t}},\alpha_t)
-\Phi(\bar{Y}_{t}, \mathcal{L}^1_{\bar{Y}_{t}},\alpha_t),\\
&\!\bar{f}_0=\bar{f}(\cdot, \bar{X},\bar{\Theta}, \mathcal{L}^1_{(\bar{X},\bar{\Theta})}, \alpha)
-f(\cdot, \bar{X},\bar{\Theta}, \mathcal{L}^1_{(\bar{X},\bar{\Theta})}, \alpha), \\
&\!\bar{\Gamma}_0=\bar{\Gamma}(\cdot, \bar{X},\bar{\Theta},  \mathcal{L}^1_{(\bar{X},\bar{\Theta})}, \alpha)-\Gamma(\cdot, \bar{X},\bar{\Theta},\mathcal{L}^1_{(\bar{X},\bar{\Theta})}, \alpha),
\end{aligned}\right.
$$
as in Proposition \ref{Pr-FBSDE-1}.
Finally, taking $(\bar{\Phi},\bar{f}, \bar{\Gamma})=(0, 0, 0)$ in \eqref{equ-FBSDE-5}  gives the estimate \eqref{equ-FBSDE-4}.
\end{proof}

\section{Infinite Horizon Linear Quadratic Mean Field Problems with Common Noise and Regime-Switching} 
\label{sec:LQ}
In this section, we apply the established theoretical results to linear-quadratic models. Let's  begin  with an infinite horizon  linear-quadratic  mean-field control problem.

\subsection{Infinite horizon  linear quadratic mean-field  control problems} 
\label{sub:MF-LQ-1}
 For any initial  $(t, x_{t}, \iota)\in \sD^2$, consider
\begin{equation}\label{equ-MFLQ-2}
\left\{
\begin{aligned}
& \mathrm dX_s=\( A (s, \alpha_{s}) X_s + \bar{A} (s, \alpha_{s})\mathbb{E}^{0}_{s}{[X_s]} + B (s, \alpha_{s}) u_{s}+\bar{B} (s, \alpha_{s})\mathbb{E}^{0}_{s}{[u_{s}]}+b_s\) \, \mathrm ds\\
& \ + \(C (s, \alpha_{s}) X_s+\bar{C}  (s, \alpha_{s})\mathbb{E}^0_{s}{[X_s]}
+D  (s, \alpha_{s}) u_{s}
+\bar{D}  (s, \alpha_{s})\mathbb{E}^0_{s}{[u_{s}]}+\si_s\) \, \mathrm dW_{s} \\
&\ + \(M (s, \alpha_{s}) X_s+\bar{M}  (s, \alpha_{s})\mathbb{E}^0_{s}{[X_s]}+N (s, \alpha_{s}) u_{s}
+\bar{N}  (s, \alpha_{s})\mathbb{E}^0_{s}{[u_{s}]}+\g_s\) \, \mathrm d W^0 _{s}, \ s\ges t,\\
& X_t=x_{t}, \quad
\alpha_t=\iota,
\end{aligned}\right.
\end{equation}
where
 $\dbE^0_s[\cd]$ is short for the conditional expectation $\mathbb{E}[\cd\mid{\mathcal{F}^{0}_s}]$.
The cost  is introduced as follows,
\begin{equation}\label{equ-MFLQ-1}
\begin{array}{lll}
&J^{\kappa}(t, x_{t},\iota ; u_\cd)=
\dfrac{1}{2} \mathbb{E} \ds\int_{t}^{\infty} \mathrm e^{\kappa s}g(s,\omega^0,X_s,\dbE_s^0[X_s],u_s,\dbE_s^0[u_s],\a_s)\mathrm ds,
\end{array}
\end{equation}
where  $\kappa
\in \mathbb{R}$ is a constant, and for $(s,\omega^0,x,x',u,u',\imath)\in [0,\infty)\times\Omega^0\times\dbR^n\times\dbR^n\times\dbR^m\times\dbR^m\times\mathbf{M}, $
\begin{equation}\label{equ-MFLQ-3}
\begin{array}{lll}
 \ds   g(s, \omega^0,x,x,u,u',\imath ):= \left\langle\2n \left(
                                     \begin{array}{ccc}
                                    \3n    Q(s, \omega^0 , \imath)  & \3n S(s, \omega^0 , \imath) ^\top\3n  \\
                                      \3n   S(s, \omega^0 , \imath)  &\3n R(s, \omega^0 , \imath)\  \3n \\
                                     \end{array}
                                   \right)\2n
                                   \left(
                                     \begin{array}{ccc}
                                     \3n  x \3n  \\
                                       \3n  u\3n    \\
                                     \end{array}
                                   \right),
                                   \left(
                                     \begin{array}{ccc}
                                     \3n  x  \3n \\
                                      \3n   u\3n    \\
                                     \end{array}
                                   \right)\2n \right\rangle + 2 \left\langle\2n  \left(
                                     \begin{array}{ccc}
                                  \3n   q(s, \omega^0 , \imath)   \3n \\
                                  \3n       r(s, \omega^0 , \imath)   \3n \\
                                     \end{array}
                                   \right),
                                   \left(
                                     \begin{array}{ccc}
                                    \3n   x  \3n \\
                                     \3n    u   \3n \\
                                     \end{array}
                                   \right)\2n \right\rangle
 \\
\ns\ds \hskip 3.2cm  +\left\langle\2n  \left(
                                     \begin{array}{ccc}
                                      \3n  \bar Q(s, \omega^0 , \imath)  & \3n\bar S(s, \omega^0 , \imath)^\top\3n   \\
                                    \3n  \bar   S(s, \omega^0 , \imath)  &\3n\bar R(s, \omega^0 , \imath) \ \3n \\
                                     \end{array}
                                   \right)\2n
                                   \left(
                                     \begin{array}{ccc}
                                     \3n  x ' \3n \\
                                      \3n   u'  \3n  \\
                                     \end{array}
                                   \right),
                                   \left(
                                     \begin{array}{ccc}
                                    \3n   x ' \3n \\
                                      \3n   u'  \3n  \\
                                     \end{array}
                                   \right)\2n \right\rangle   + 2 \left\langle\2n  \left(
                                     \begin{array}{ccc}
                                   \3n  \bar q(s, \omega^0 , \imath)  \3n  \\
                                    \3n  \bar    r(s, \omega^0 , \imath)  \3n  \\
                                     \end{array}
                                   \right),
                                   \left(
                                     \begin{array}{ccc}
                                   \3n    x '\3n  \\
                                    \3n     u '\3n   \\
                                     \end{array}
                                   \right)\2n \right\rangle.
\end{array}
\end{equation}
%
%
%
Let the set of admissible controls be $\mathcal{U}_{\text{ad}}^{\kappa}[t, \infty) := L_{\mathbb{F}}^{2, \kappa}(t, \infty; \mathbb{R}^m)$. We now formulate the control problem informally as follows.

\ss

$\textbf{Problem (C)}^\k_S$
For any $(t, x_{t}, \iota)\in \sD^2$, find an admissible control $u^{\ast} \in \mathcal{U}_{a d}^{\kappa}[t, \infty)$ such that
\begin{equation}\label{equ-MFLQ-5}
J^{\kappa}\left(t, x_{t} , \iota; u^{\ast}_\cd\right)=\inf _{u \in \mathcal{U}_{a d}^{\kappa}[t, \infty)} J^{\kappa}\left(t, x_{t} , \iota; u_\cd\right).
\end{equation}
When $S(\cd,\cd)=\bar S(\cd,\cd)\equiv0$, the above problem is denoted by    Problem (C)$^\k_0$.
 In what follows, we often suppress  the explicit dependence of the coefficients on their variables for notational simplicity.

 To ensure the well-posedness of Problem (C)$^\k_S$,
 the constant
$\k$   must be carefully chosen  to define the admissible set  $  \mathcal{U}_{\text{ad}}^{\kappa}[t, \infty)$.
However, as indicated in our previous studies \cite{WY-2021, Wei-Xu-Yu-2023}, an optimal control may fail to exist  in this predefined set   when the cross-term coefficients  $ S(\cd)$ and $\bar S(\cd)$ are not both identically zero.
This issue persists in the present setting. To address it in a unified manner,   we introduce  a transformation for   Problem (C)$^\k_S$, which can  eliminate    $ S(\cd)$ and $\bar S(\cd)$
  from the cost functional.
Precisely,  we introduce the following notations,
 $$\left\{\3n\ba{ll}
 \ns  \sA :=A-BR^{-1}S, &  \bar\sA:= \bar{A}+BR^{-1}S -(B+\bar{B})(R+\bar{R})^{-1}(S+\bar S),\\
  \ns  \sC :=C-DR^{-1}S, & \bar\sC:= \bar{C}+ DR^{-1}S-(D+\bar{D})(R+\bar{R})^{-1}(S+\bar S),\\
   \ns \sM :=M-NR^{-1}S, & \bar\sM:= \bar{M}+ NR^{-1}S-(N+\bar{N})(R+\bar{R})^{-1}(S+\bar S),\\
   \ns \sQ:=Q-S^\top R^{-1}S, & \bar\sQ:= \bar{Q} +S^\top R^{-1}S-(S+\bar{S})^\top (R+\bar{R})^{-1}(S+\bar S) ,\\
   \ns \mathbbm{q}:= q-S^\top R^{-1}r,  & \bar{\mathbbm{q}}:= \bar q+S^\top R^{-1}r-(S+\bar S)^\top (R+\bar R)^{-1}(r+\bar r),
\ea\right.
$$
and apply the  control transformation
\bel{u-trans}{\mathbbm{u}}:= u +R^{-1} S  ( X -\dbE_s^0[X ])+(R +\bar R)^{-1}  (S +\bar S) \dbE_s^0[X ] \ee
to the state equation \rf {equ-MFLQ-2} and the cost \rf{equ-MFLQ-1}. This yields the following transformed system,
\begin{equation}\label{equ-MFLQ-tran}
\left\{
\begin{aligned}
& \mathrm dX_s=\( \sA  (s,\a(s)) X_s + \bar{\sA} (s, \alpha_{s})\mathbb{E}^{0}_{s}{[X_s]} + B (s, \alpha_{s}) \mathbbm{u}_{s}+\bar{B} (s, \alpha_{s})\mathbb{E}^{0}_{s}{[\mathbbm{u}_{s}]}+b_s\) \, \mathrm ds\\
& \ + \(\sC (s, \alpha_{s}) X_s+\bar{\sC}  (s, \alpha_{s})\mathbb{E}^0_{s}{[X_s]}
+D  (s, \alpha_{s}) \mathbbm{u}_{s}
+\bar{D}  (s, \alpha_{s})\mathbb{E}^0_{s}{[\mathbbm{u}_{s}]}+\si_s\) \, \mathrm dW_{s} \\
&\ + \(\sM (s, \alpha_{s}) X_s+\bar{\sM}  (s, \alpha_{s})\mathbb{E}^0_{s}{[X_s]}+N (s, \alpha_{s}) \mathbbm{u}_{s}
+\bar{N}  (s, \alpha_{s})\mathbb{E}^0_{s}{[\mathbbm{u}_{s}]}+\g_s\) \, \mathrm d W^0 _{s}, \ s\ges t,\\
& X_t=x_{t}, \quad
\alpha_t=\iota,
\end{aligned}\right.
\end{equation}
 and   the following transformed cost,
\begin{equation}\label{equ-MFLQ-tran-cost}
\begin{array}{lll}
&\sJ^{\kappa}(t, x_{t},\iota ; \mathbbm{u}_\cd)=J^{\kappa}\left(t, x_{t} , \iota; u_\cd\right)=
\dfrac{1}{2} \mathbb{E} \ds\int_{t}^{\infty} \mathrm e^{\kappa s}\mathbbm g(s,\omega^0,X_s,\dbE_s^0[X_s],\mathbbm{u}_s,\dbE_s^0[\mathbbm{u}_s],\a_s)\mathrm ds,
\end{array}
\end{equation}
where for $(s,\omega^0,x,x',\mathbbm{u},\mathbbm{u}',\imath)\in [0,\infty)\times\Omega^0\times\dbR^n\times\dbR^n\times\dbR^m\times\dbR^m\times\mathbf{M}, $
\begin{equation*}\label{equ-MFLQ-3}
\begin{array}{lll}
 \ds \mathbbm g(s, \omega^0,x,x,\mathbbm{u},\mathbbm{u}',\imath ):= \left\langle\2n \left(
                                     \begin{array}{ccc}
                                    \3n    \sQ(s, \omega^0 , \imath)  & \3n 0\3n  \\
                                      \3n   0  &\3n R(s, \omega^0 , \imath) \3n \\
                                     \end{array}
                                   \right)\2n
                                   \left(
                                     \begin{array}{ccc}
                                     \3n  x \3n  \\
                                       \3n  \mathbbm{u}\3n    \\
                                     \end{array}
                                   \right),
                                   \left(
                                     \begin{array}{ccc}
                                     \3n  x  \3n \\
                                      \3n   \mathbbm{u}\3n    \\
                                     \end{array}
                                   \right)\2n \right\rangle+2 \left\langle\2n  \left(
                                     \begin{array}{ccc}
                                  \3n   \mathbbm{q}(s, \omega^0 , \imath)   \3n \\
                                  \3n       r(s, \omega^0 , \imath)   \3n \\
                                     \end{array}
                                   \right),
                                   \left(
                                     \begin{array}{ccc}
                                    \3n   x  \3n \\
                                     \3n    \mathbbm{u}  \3n \\
                                     \end{array}
                                   \right)\2n \right\rangle
 \\
\ns\ds \hskip 3.8cm  +\left\langle\2n  \left(
                                     \begin{array}{ccc}
                                      \3n  \bar \sQ(s, \omega^0 , \imath)  & \3n \3n   \\
                                    \3n     &\3n\bar   R(s, \omega^0 , \imath) \3n \\
                                     \end{array}
                                   \right)\2n
                                   \left(
                                     \begin{array}{ccc}
                                     \3n  x ' \3n \\
                                      \3n  \mathbbm{u}'  \3n  \\
                                     \end{array}
                                   \right),
                                   \left(
                                     \begin{array}{ccc}
                                    \3n   x ' \3n \\
                                      \3n   \mathbbm{u}'  \3n  \\
                                     \end{array}
                                   \right)\2n \right\rangle   + 2 \left\langle\2n  \left(
                                     \begin{array}{ccc}
                                   \3n  \bar  {\mathbbm q}(s, \omega^0 , \imath)  \3n  \\
                                    \3n  \bar    r(s, \omega^0 , \imath)  \3n  \\
                                     \end{array}
                                   \right),
                                   \left(
                                     \begin{array}{ccc}
                                   \3n    x '\3n  \\
                                    \3n    \mathbbm{u} '\3n   \\
                                     \end{array}
                                   \right)\2n \right\rangle.
\end{array}
\end{equation*}

We now formulate the following optimal control problem based on the transformed system \eqref{equ-MFLQ-tran} and   \eqref{equ-MFLQ-tran-cost}.
\ss

 \textbf{Problem (C)$^\k$}
For any $(t, x_{t}, \iota)\in \sD^2$, find an admissible control $\mathbbm{u}^{\ast} \in \mathcal{U}_{a d}^{\kappa}[t, \infty)$ such that
\begin{equation}\label{equ-MFLQ-5}
\sJ^{\kappa}\left(t, x_{t} , \iota; \mathbbm{u}^{\ast}_\cd\right)=\inf _{\Bu \in \mathcal{U}_{a d}^{\kappa}[t, \infty)} \sJ^{\kappa}\left(t, x_{t} , \iota; \mathbbm{u}_\cd\right).
\end{equation}
In this case, $\mathbbm{u}^{\ast}_\cd$ is called {\emph {an open-loop optimal control}} for Problem (C)$^\k$ at $(t, x_{t}, \iota)$, the corresponding state process $\dbX \equiv X(\cdot; t, x_{t}, \iota, \mathbbm{u}^{\ast})$ is called   the  optimal state  trajectory, and $(\mathbbm{u}^{\ast}, \dbX)$ is called   an optimal pair.

 \ms

Let $\dbS^{n}$ be the set of all $(n \times n)$ symmetric matrices.
For any $\dbS^{n}$-valued stochastic process $\Lambda(\cdot)$, if   $\Lambda(s,\omega)\ges 0$ (resp.$> 0,\ \les 0,\ < 0$) for almost all $(\omega, s)\in \Omega \times [0, \infty)$,  we denote $\Lambda(\cd,\cd)\ges 0$ (resp. $> 0,\ \les 0,\ < 0$).
Moreover, we write $\Lambda(\cd,\cd) \gg 0$ (respectively, $\ll 0$) if there exists a constant $\delta > 0$ such that $\Lambda(\cd,\cd) - \delta \mathrm{I}_n \ges 0$ (respectively, $\les 0$), where $\mathrm{I}_n$ denotes the $n \times n$ identity matrix.  In the following, we denote by
$ \lambda_{\max }( \L) $
  the largest eigenvalue of the symmetric matrix  $ \L$.

Now we  specify the conditions for the well-posedness of  Problem (C)$^\k$.
\begin{description}
    \item[(H1)]
(i) For each  $\imath\in \mathbf{M}$,
$$
\ba{ll}
\ns\ds A(\cd,\imath), \bar{A}(\cd,\imath)\in L^{\infty}_{\mathbb{F}^0}(t, \infty ; \mathbb{R}^{n \times n}),\qquad\
B(\cd,\imath), \bar{B}(\cd,\imath)\in L^{\infty}_{\mathbb{F}^0}(t, \infty ; \mathbb{R}^{n \times m}),\\
\ns\ds C(\cd,\imath), \bar{C}(\cd,\imath)\in L^{\infty}_{\mathbb{F}^0}(t, \infty ; \mathbb{R}^{nd \times n}),\qq
D(\cd,\imath), \bar{D}(\cd,\imath)\in L^{\infty}_{\mathbb{F}^0}(t, \infty ; \mathbb{R}^{nd \times m}),\\
\ns\ds  M(\cd,\imath), \bar{M}(\cd,\imath)\in L^{\infty}_{\mathbb{F}^0}(t, \infty ; \mathbb{R}^{nd_0 \times n}),\q
N(\cd,\imath), \bar{N}(\cd,\imath)\in L^{\infty}_{\mathbb{F}^0}(t, \infty ; \mathbb{R}^{nd_0 \times m});
\ea
$$

  (ii) for some $\k^\ast\in\dbR $, $b_{\cdot}\in L_{\mathbb{F}}^{2, \kappa^\ast}(t, \infty; \mathbb{R}^n)$,
$\sigma_{\cdot}\in L_{\mathbb{F}}^{2, \kappa^\ast}(t, \infty; \mathbb{R}^{nd})$,
$\gamma_{\cdot}\in L_{\mathbb{F}}^{2, \kappa^\ast}(t, \infty; \mathbb{R}^{nd_0})$ .

    \item[(H2)]
(i) For each  $\imath\in \mathbf{M}$,
$$ \ba{ll}
\ns\ds Q(\cd,\imath), \bar{Q}(\cd,\imath)\in L^{\infty}_{\mathbb{F}^0}(t, \infty ; \dbS^{n}), \q
R(\cd,\imath),\bar{R}(\cd,\imath)\in L^{\infty}_{\mathbb{F}^0}(t, \infty ; \dbS^{m}),  \\
\ns\ds S(\cd,\imath), \bar{S}(\cd,\imath)\in L^{\infty}_{\mathbb{F}^0}(t, \infty ;\mathbb{R}^{m \times n});
 \ea$$

 (ii)    $q(\cd,\imath), \bar{q}(\cd,\imath)\in L^{2, \kappa^\ast}_{\mathbb{F}}(t, \infty ;\mathbb{R}^{n}),$ $
r(\cd,\imath), \bar{r}(\cd,\imath)\in L^{2, \kappa^\ast}_{\mathbb{F}}(t, \infty ;\mathbb{R}^{m})$.

   \item[(H3)]
There exist  constants  $  \k_x, \k_{x\mu}  \in \mathbb{R}$ and  $\kappa_{y}, \kappa_{y\nu} \in \mathbb{R}$ such that, for all $(s, \omega^0, \imath)\in [0, \infty)\times \Omega^0\times\mathbf{M}$,

 (i)
$
\left\{\2n
\ba{ll}
\ns\ds  \k_x \ges  \mathop{\esssup}\limits_{(s, \omega^0, \imath) } \lambda_{\max }\big(\sA+\sA^{\top} + \sC^{\top}\sC+\sM^{\top}\sM\big)(s, \omega^0, \imath),\\
\ns\ds \k_{x\mu}  \ges  \mathop{\esssup}\limits_{(s, \omega^0, \imath) }
 \lambda_{\max }\big(\bar\sA  +  \bar\sA    ^\top +  (\sC+ \bar\sC)  ^\top(\sC+ \bar\sC)    -\sC^\top \sC\\
\ns\ds  \hskip 3.5cm +  (\sM+ \bar\sM)  ^\top(\sM+ \bar\sM)   -\sM^\top \sM \big)(s,\omega^0,\imath);
\ea\right.
$

%
%
%
(ii)  $
\left\{\2n
\ba{ll}
\ns\ds \kappa_{y} \ges \kappa+\frac{1}{2}\mathop{\esssup}\limits_{(s, \omega^0, \imath)  }
 \lambda_{\max }\big(\sA+\sA^{\top}\big)(s, \omega^0, \imath),\\
\ns\ds  \kappa_{y\nu} \ges \frac{1}{2} \mathop{\esssup}\limits_{(s, \omega^0, \imath)  }
 \lambda_{\max }\big(\bar{\sA} +\bar\sA^{\top}\big)(s, \omega^0, \imath).
\ea\right.
$

   \item[(PD)]   For all $\imath \in\textbf{M}$, the following hold,
$$
\ba{ll}
\ns\ds {\rm(i)}\q R (\cdot, \imath) \gg 0,\q
\left(\begin{array}{cc}
\3n Q (\cdot, \imath) & \3n S(\cdot, \imath)^{\top} \3n \\
\3n S(\cdot, \imath) & \3n R(\cdot, \imath)\ \3n
\end{array}\right) \ges 0, \\
\ns\ds   {\rm(ii)}\q   (R+\bar R) (\cdot, \imath) \gg 0,\q  \left(\begin{array}{cc}
\3n (Q +\bar Q)(\cdot, \imath) & \3n (S+\bar S) (\cdot, \imath)^{\top}\2n \\
\3n (S+\bar S)(\cdot, \imath) & \3n (R+\bar R)(\cdot, \imath)\  \3n
\end{array}\right) \ges 0.
 \ea $$
 \end{description}

For convenience, we define
\bel{k}\left\{\3n\ba{ll}
 \ns \ds\overline{\kappa}:= \min\{-{\k}_x-{\k}_{x\mu} \mathbf{1}_{\{{\k}_{x\mu} >0\}},\k^{\ast}\},\\
\ns\ds
\underline{\kappa}:=2 (\k_{y}+ \k_{y \nu}\mathbf{1}_{\{\k_{y \nu}>0\}}+K ),\\
\ns\ds K:= \max\limits_{  \imath\in\textbf{M}}\Big\{\Vert \mathscr{Q} (\cdot, \imath)\Vert^{2}
+\Vert \mathscr{C}(\cdot, \imath)\Vert^{2}+\Vert \mathscr{M} (\cdot, \imath)\Vert^{2}
+\max\{\Vert   \bar{\mathscr{Q}} (\cdot, \imath)\Vert^{2}, \Vert \bar{\mathscr{C}} (\cdot, \imath)\Vert^{2},
\Vert\bar{\mathscr{M}} (\cdot, \imath)\Vert^{2}\}\!\Big\}.\!\!\!\!

\ea\right.\ee
%
%
%
Under  $\mathbf{(H1)}$ and  $\mathbf{(H3)}$-(i), for  any $(t, x_{t}, \iota)\in \sD^2$ and
$\mathbbm{u} _\cd\in \mathcal{U}_{\text{ad}}^{\kappa}[t, \infty)$ with $\kappa< \overline{\k}$,
 SDE \eqref{equ-MFLQ-tran} admits the unique solution
$X \equiv X(\cdot; t, x_t, \iota, \mathbbm{u}) \in L_{\mathbb{F}}^{2, \kappa}(t, \infty; \mathbb{R}^n)$ (see  Corollary \ref{Cor-SDE-1}).
In this case, the pair  $(\mathbbm{u}_\cd, X_\cd)$ is  referred to as   an admissible  pair of Problem (C)$^\k$.
 Moreover,  under   $\mathbf{(H2)}$,  the cost functional \eqref{equ-MFLQ-tran} is  well-defined for every admissible pair $(\mathbbm{u}_\cd, X_\cd)$.
 This ensures that   Problem (C)$^\k$
 is well-posed for    $\k<\overline{\kappa}$.

\ss

We now turn to  characterizing the open-loop optimal control of Problem (C)$^\k$.

%
\begin{lemma}\label{Le-MFLQ-1}\sl
Suppose    $\mathbf{(H1)}$-$\mathbf{(H3)}$ and $\mathbf{(PD)}$ hold. For $\kappa\in (\underline{\kappa},\overline \kappa)$ and the initial triple  $(t, x_{t}, \iota) \in \sD^2$,
let $(\mathbbm{u}^{\ast}_\cd, \dbX_\cd ) \in \mathcal{U}_{ad}^{\kappa}[t, \infty) \times L_{\mathbb{F}}^{2, \kappa}(t, \infty ; \mathbb{R}^{n})$ be an admissible pair and  $(\dbY,\dbZ,\dbZ^0 , \dbK )\in \cL_{\mathbb{F}}^{2, \kappa}(t, \infty  )  $ be the solution of  the following BSDE (the adjoint equation of Problem (C)$^\k$)
\begin{equation}\label{equ-MFLQ-8}
\begin{aligned}
&\mathrm d\dbY _{s}
= -\(\big(\kappa \mathrm I_n + \sA \big)^{\top} \dbY _{s}
+\bar{\sA} \,^{\top}\mathbb{E}^0_{s}{[\dbY _{s}]}+\sC ^{\top} \dbZ_{s}
+\bar{\sC} \,^{\top}\mathbb{E}^0_{s}{[\dbZ_{s}]}+ \sM^{\top} \dbZ^0 _{s}
+\bar{\sM}\, ^{\top}\mathbb{E}^0_{s}{[   \dbZ^0 _{s} ]}
 \\
&\hskip1.2cm+\sQ  \dbX _{s}
+ \bar{\sQ} \mathbb{E}^0_{s}{[\dbX _{s}]}
+ \mathbbm{q} + \bar{\mathbbm{q}} \) \, \mathrm ds
%
%
+\dbZ_{s}\, \mathrm dW_{s}
+ \dbZ^0 _{s} \, \mathrm d W^0 _{s}
+ \dbK_{s} \circ\mathrm d M_s, \ s \ges t.
\end{aligned}
\end{equation}
Then $\mathbbm{u}^{\ast}_\cd$ is  optimal  for Problem (C)$^\k$ at $(t, x_{t}, \iota)$ if and only if the following optimality condition holds,
\begin{equation}\label{equ-MFLQ-9}
\begin{aligned}
& B^{\top} \dbY_s +\bar{B} ^{\top} \mathbb{E}^{0}_s{[\dbY _s]}
+D ^{\top} \dbZ_s +\bar{D} ^{\top} \mathbb{E}^{0}_s{[\dbZ_s]}
+N ^{\top}  \dbZ^0 _{s}
+\bar{N} ^{\top} \mathbb{E}^{0}_s{[ \dbZ^0 _{s}]}\\
&
+R \mathbbm{u}^{\ast}_s
+ \bar{R} \mathbb{E}^{0}_s{[\mathbbm{u}^{\ast}_s]}
+r + \bar{r} =0, \q \text{a.e.} \ s \ges t, \ \dbP\text{-a.s.}
\end{aligned}
\end{equation}
\end{lemma}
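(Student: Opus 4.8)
The plan is to argue by the convex variational method, exploiting that under $\mathbf{(PD)}$ the transformed cost $\sJ^{\kappa}(t,x_t,\iota;\cdot)$ is a convex functional of the control, so that optimality is equivalent to the vanishing of its first G\^ateaux variation. First I would record the convexity: by the Schur-complement identities $\sQ=Q-S^{\top}R^{-1}S$ and $\sQ+\bar{\sQ}=(Q+\bar Q)-(S+\bar S)^{\top}(R+\bar R)^{-1}(S+\bar S)$, assumption $\mathbf{(PD)}$ yields $\sQ\ges0$, $\sQ+\bar{\sQ}\ges0$, $R\gg0$ and $R+\bar R\gg0$; splitting any state variation into its $\cF^0$-conditional mean and the complementary part then shows the second variation of $\sJ^{\kappa}$ is nonnegative. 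Consequently $\mathbbm u^{\ast}$ is optimal if and only if $\frac{d}{d\e}\big|_{\e=0}\sJ^{\kappa}(t,x_t,\iota;\mathbbm u^{\ast}+\e\mathbbm v)=0$ for every $\mathbbm v\in\cU_{ad}^{\kappa}[t,\infty)$.

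Next I would set up the perturbation. For $\mathbbm v\in\cU_{ad}^{\kappa}[t,\infty)$ and $\e\in\dbR$, linearity of \eqref{equ-MFLQ-tran} gives $X(\cd;t,x_t,\iota,\mathbbm u^{\ast}+\e\mathbbm v)=\dbX+\e X^0$, where $X^0$ solves the homogeneous variational SDE (the same coefficients $\sA,\bar{\sA},\sC,\bar{\sC},\sM,\bar{\sM}$, the inputs $B\mathbbm v+\bar B\dbE^0_s[\mathbbm v]$, $D\mathbbm v+\bar D\dbE^0_s[\mathbbm v]$, $N\mathbbm v+\bar N\dbE^0_s[\mathbbm v]$, and initial datum $X^0_t=0$); by Corollary \ref{Cor-SDE-1}, $X^0\in L^{2,\kappa}_\dbF(t,\infty;\dbR^n)$ for $\kappa<\overline{\kappa}$. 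Differentiating \eqref{equ-MFLQ-tran-cost} in $\e$ exhibits the first variation as the sum of a state-coupling part $\dbE\int_t^{\infty}e^{\kappa s}\big[\langle\sQ\dbX+\mathbbm q,X^0\rangle+\langle\bar{\sQ}\dbE^0_s[\dbX]+\bar{\mathbbm q},\dbE^0_s[X^0]\rangle\big]\,\mathrm ds$ and a control part $\dbE\int_t^{\infty}e^{\kappa s}\big[\langle R\mathbbm u^{\ast}+r,\mathbbm v\rangle+\langle\bar R\dbE^0_s[\mathbbm u^{\ast}]+\bar r,\dbE^0_s[\mathbbm v]\rangle\big]\,\mathrm ds$.

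The core computation is to apply the generalized It\^o formula to $e^{\kappa s}\langle\dbY_s,X^0_s\rangle$ on $[t,T]$ and send $T\to\infty$. The adjoint equation \eqref{equ-MFLQ-8} is built precisely so that, after taking expectations and using the $\cF^0$-measurability of the coefficients together with the tower property, all couplings through $\sA,\bar{\sA},\sC,\bar{\sC},\sM,\bar{\sM}$ cancel (the extra $\kappa\mathrm I_n$ in the adjoint drift absorbing the derivative of the weight $e^{\kappa s}$), while the continuity of $X^0$ kills the cross-variation with the $\dbK\comp\mathrm dM$ term; what remains is the identity $\dbE\int_t^{\infty}e^{\kappa s}\langle\sQ\dbX+\bar{\sQ}\dbE^0_s[\dbX]+\mathbbm q+\bar{\mathbbm q},X^0\rangle\,\mathrm ds=\dbE\int_t^{\infty}e^{\kappa s}\big[\langle\dbY,B\mathbbm v+\bar B\dbE^0_s[\mathbbm v]\rangle+\langle\dbZ,D\mathbbm v+\bar D\dbE^0_s[\mathbbm v]\rangle+\langle\dbZ^0,N\mathbbm v+\bar N\dbE^0_s[\mathbbm v]\rangle\big]\,\mathrm ds$. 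Substituting this into the first variation and collapsing the barred terms by the tower property (using that $\bar{\sQ}\dbE^0_s[\dbX]$ and the barred coefficients are $\cF^0$-adapted, so conditional means may be inserted and $X^0$ replaced by $\dbE^0_s[X^0]$ freely), the state-coupling part is converted into the control pairings, and the whole first variation reduces to $\dbE\int_t^{\infty}e^{\kappa s}\big\langle B^{\top}\dbY_s+\bar B^{\top}\dbE^0_s[\dbY_s]+D^{\top}\dbZ_s+\bar D^{\top}\dbE^0_s[\dbZ_s]+N^{\top}\dbZ^0_s+\bar N^{\top}\dbE^0_s[\dbZ^0_s]+R\mathbbm u^{\ast}_s+\bar R\dbE^0_s[\mathbbm u^{\ast}_s]+r+\bar r,\ \mathbbm v_s\big\rangle\,\mathrm ds$. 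Since $\mathbbm v$ ranges over all of $\cU_{ad}^{\kappa}[t,\infty)$, this functional vanishes identically if and only if \eqref{equ-MFLQ-9} holds $\mathrm ds\otimes\dbP$-a.e., which is the asserted equivalence.

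I expect the main obstacle to be rigorously discarding the boundary term $\dbE\big[e^{\kappa T}\langle\dbY_T,X^0_T\rangle\big]$ as $T\to\infty$, i.e.\ verifying the transversality condition on the infinite horizon. I would control it by Cauchy--Schwarz, $\big|\dbE[e^{\kappa T}\langle\dbY_T,X^0_T\rangle]\big|\les(e^{\kappa T}\dbE|\dbY_T|^2)^{1/2}(e^{\kappa T}\dbE|X^0_T|^2)^{1/2}$, where the first factor tends to $0$ by Lemma \ref{BSDE-Le} and the second stays bounded (indeed decays) by the estimate \eqref{L-p-SDE-1}-(i) of Corollary \ref{Cor-SDE-1} for $\kappa<\overline{\kappa}$. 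A secondary point is confirming that the $\mathrm dW$, $\mathrm dW^0$ and $\comp\,\mathrm dM$ integrals arising in the It\^o expansion are true martingales with zero expectation, which follows from the membership of $(\dbX,\dbY,\dbZ,\dbZ^0,\dbK)$ and $X^0$ in the weighted $\cL^{2,\kappa}_\dbF$ spaces.
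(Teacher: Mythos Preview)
Your proposal is correct and follows essentially the same variational approach that the paper invokes (the paper omits the proof, citing \cite{WY-2021, Wei-Xu-Yu-2023}, which proceed precisely via convexity of the transformed cost, the linearized state $X^0$, and the duality identity obtained by applying It\^o's formula to $e^{\kappa s}\langle\dbY_s,X^0_s\rangle$). One minor point: when you invoke \eqref{L-p-SDE-1}-(i) for the bound on $e^{\kappa T}\dbE|X^0_T|^2$, note that the forcing terms in the $X^0$-equation lie only in $L^{2,\kappa}_\dbF$ (not $L^{2,\kappa^\ast}_\dbF$), so the relevant $\kappa^{\ast}$ for that application is $\kappa$ itself; the boundedness still follows since $\kappa<-\k_x-\k_{x\mu}\mathbf{1}_{\{\k_{x\mu}>0\}}$ makes the exponential factor in the Gr\"onwall argument nonpositive, but you should phrase the estimate directly from the It\^o computation on $e^{\kappa s}|X^0_s|^2$ rather than quoting \eqref{L-p-SDE-1}-(i) verbatim.
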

The proof follows a similar line of argument as in our previous works \cite{WY-2021, Wei-Xu-Yu-2023} and is therefore omitted here.
 \ms
Observe that condition $(\mathbf{PD})$  is equivalent to
$$%
R (\cdot, \cd) \gg 0, \ (R +\bar R)(\cdot, \cd) \gg 0    \quad
\sQ (\cdot, \cd) \ges 0,\ (\sQ+\bar \sQ)  (\cdot, \cd) \ges 0 .
$$
In this case,  the optimal control   $ \mathbbm{u}^\ast_\cd $
characterized by \rf{equ-MFLQ-9}  has  the following explicit representation:
\begin{equation}\label{equ-MFLQ-19}
 \begin{aligned}
&\mathbbm{u}^{\ast}_{\cdot} = -R^{-1}\Big(  B^{\top}(\dbY - \mathbb{E}^0_s[\dbY])  +  D^{\top}(\dbZ-\mathbb{E}^0_s[\dbZ])
 +  N^{\top}( \dbZ^0  - \mathbb{E}^0_s[ \dbZ^0])  + r + \bar{r} - \mathbb{E}^0_s[r + \bar{r}])\Big)\\
&\qq -(R + \bar{R})^{-1}\Big( (B + \bar{B})^{\top} \mathbb{E}^0_s[\dbY] + (D + \bar{D})^{\top} \mathbb{E}^0_s[\dbZ]
 + (N + \bar{N})^{\top} \mathbb{E}^0_s[ \dbZ^0]  + \mathbb{E}^0_s[r + \bar{r}]\Big) .
 %
 \end{aligned}
  \end{equation}
%
%
%
%
 Substituting $\mathbbm{u}^\ast_{\cdot}$ into  the state equation  \eqref{equ-MFLQ-2} and the adjoint equation \eqref{equ-MFLQ-8},  we obtain  the following infinite horizon
fully coupled McKean-Vlasov FBSDEs,
 \begin{equation}\label{equ-MFLQ-Hamiltonian}
\left\{\3n
\ba{ll}
 \ds \mathrm d\dbX _s=b(s, \dbX_s,\dbY_s,\dbZ_s,\dbZ^0_s,\cL^1_{( \dbX_s,\dbY_s,\dbZ_s,\dbZ^0_s)},\a_s)\, \mathrm ds+ \si(s, \dbX_s,\dbY_s,\dbZ_s,\dbZ^0_s,\cL^1_{( \dbX_s,\dbY_s,\dbZ_s,\dbZ^0_s)},\a_s)\, \mathrm dW_s \!\!\!\!\! \\
\ns\ds \hskip1cm
+ \wt \si(s, \dbX_s,\dbY_s,\dbZ_s,\dbZ^0_s,\cL^1_{( \dbX_s,\dbY_s,\dbZ_s,\dbZ^0_s)},\a_s)\,  \mathrm dW^0_{s}, \q s\ges t,\\
\ns\ds  \mathrm d\dbY _s =
-f(s, \dbX_s,\dbY_s,\dbZ_s,\dbZ^0_s,\cL^1_{( \dbX_s,\dbY_s,\dbZ_s,\dbZ^0_s)},\a_s)\, \mathrm ds
+ \dbZ_s \, \mathrm dW_{s}
+ \dbZ^0 _{s} \, \mathrm d W^0 _{s}
+ \dbK _s \circ\mathrm d M_s, \  s\ges t, \\
\ns\ds  \dbX _{t}=x_{t}, \quad \alpha_{t}=\iota,
\ea\right.
\end{equation}
where, for   $(s,x,\th, \nu,\imath)\in [0,\i)\times\dbR^n\times \sR\times \sP_2(\dbR^n\times\sR)\times {\textbf M} $,
 \bel{b-si-f}\left\{\3n
\ba{ll}
 \ds b(s,x,\th, \nu,\imath)= b_s +\sA  x   + \bar\sA   \int_{\dbR^n\times\sR} \tilde x \, \mathrm d\nu(\tilde x,\tilde \th)  +\Pi_  B (s, \th, \nu,\imath)    ,\\
 \ns \ds \si(s,x,\th, \nu,\imath)= \si_s +\sC  x  + \bar\sC   \int_{\dbR^n\times\sR} \tilde x \, \mathrm d\nu(\tilde x,\tilde \th)  +\Pi_  D(s,  \th,\nu,\imath)    ,\\
 \ns  \ds \wt\si(s,x,\th, \nu,\imath)= \wt\si_s +\sM  x + \bar\sM  \int_{\dbR^n\times\sR} \tilde x \, \mathrm d\nu(\tilde x,\tilde \th)  +\Pi_  N(s,  \th,\nu,\imath)    ,\\

 \ns \ds f(s,x,\th, \nu,\imath)=\sQ  x
+  \bar{\sQ} \int_{\dbR^n\times\sR } \tilde x \, \mathrm d\nu(\tilde x,\tilde \th)  +\big(\kappa \mathrm I_n + \sA \big)^{\top} y+\sC ^{\top} z + \sM^{\top} z^0
\\
\ns\ds \qq\qq \qq\q+ \int_{\dbR^n\times\sR }\big(\bar{\sA}\, ^{\top}\tilde y+\bar{\sC}\,^\top \tilde z+\bar{\sM}\,^\top \tilde {z}^0\big ) \mathrm d\nu(\tilde x,\tilde \th)
+ \mathbbm{q} + \bar{\mathbbm{q} },
\ea
\right.\ee
 with
 $$ \ba{ll}
 \ns \ds \ \Pi_  \Phi(s,  \th,\nu,\imath) \\
  \ns\ds :=-\Phi R^{-1}\[B^{\top}y+D^{\top}z+N^{\top} z^0   -\int_{\dbR^n\times\sR }\big(B^\top\tilde y+D^\top \tilde z+N^\top \tilde {z}^0\big ) \mathrm d\nu(\tilde x,\tilde \th)+r+\bar{r}-\dbE^0_s[r+\bar r]\] \\
\ns \ds\q      -(\Phi+\bar{\Phi})(R+\bar{R})^{-1}\[\int_{\dbR^n\times\sR }\big((B+\bar{B})^{\top}\tilde y+(D+\bar{D})^{\top} \tilde z + (N+\bar{N})^{\top}  \tilde{z}^0 \big)\mathrm d\nu(\tilde x,\tilde \th) +\dbE^0_s[r+\bar r]  \]   ,
\ea
$$
for $\Phi=B,D, N$, resp.
 System  \rf{equ-MFLQ-Hamiltonian} is in fact   the Hamiltonian system of   Problem (C)$^\k$ at $(t, x_t, \iota)$.
The above analysis shows that the existence and uniqueness of an optimal control of  Problem (C)$^\k$ is closely related to the unique solvability of the Hamiltonian system \rf{equ-MFLQ-Hamiltonian}.
 Therefore, we   establish the  wellposedness result  of \rf{equ-MFLQ-Hamiltonian} in the following.

\begin{theorem}\label{Pro-MFLQ}\sl
Assume $\mathbf{(H1)}$-$\mathbf{(H3)}$, $(\mathbf{PD})$ and  $\ds \k= -\frac{{\k}_x}{2}+{\k}_{y}\in (\underline{{\k}},\overline {\k})$. Then
 for every $(t, x_t, \iota) \in \sD^2$, system \eqref{equ-MFLQ-Hamiltonian} admits a unique solution $(\dbX ,\dbY,\dbZ,\dbZ^0 , \dbK ) \in L_\dbF^{2,\k}(t,\i;\dbR^n) \times \cL_{\mathbb{F}}^{2, { \k}}(t, \infty)$.
In this case, the following
 \begin{equation}\label{equ-MFLQ-u-opt}
 \begin{aligned}
&u^\ast \!=\! -R^{-1}\Big(  S  ( \dbX \!-\!\dbE_s^0[\dbX ])+ B^{\top}(\dbY \!-\! \mathbb{E}^0_s[\dbY])
 + D^{\top}(\dbZ\!-\!\mathbb{E}^0_s[\dbZ])
 + N^{\top}( \dbZ^0 \! -\!\mathbb{E}^0_s[ \dbZ^0])  + r + \bar{r} - \mathbb{E}^0_s[r + \bar{r}])\Big)\\
&\qq -(R\! + \!\bar{R})^{-1}\Big(  (S \!+\!\bar S) \dbE_s^0[\dbX ] +(B \!+ \!\bar{B})^{\top} \mathbb{E}^0_s[\dbY] + (D + \bar{D})^{\top} \mathbb{E}^0_s[\dbZ]
 + (N + \bar{N})^{\top} \mathbb{E}^0_s[ \dbZ^0]  + \mathbb{E}^0_s[r + \bar{r}]\Big) ,
 \end{aligned}
  \end{equation}
lies in $\mathcal{U}_{ad}^{{\k}}[t, \infty)$ and is the unique open-loop optimal control of Problem (C)$^\k_S$ at $(t, x_t, \iota)$.

\end{theorem}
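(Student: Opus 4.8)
The plan is to recognize \eqref{equ-MFLQ-Hamiltonian} as a special case of the general fully coupled system \eqref{equ-FBSDE-1} and to apply Theorem \ref{Th-FBSDE}. Under this identification the forward initial coupling is the constant map $\Phi(y,\upsilon,\imath)\equiv x_t$ (so that $\dbX_t=x_t$), while $(b,\si,\wt\si,f)$ are those in \eqref{b-si-f}; by $\mathbf{(PD)}$ both $R^{-1}$ and $(R+\bar R)^{-1}$ are bounded, so every coefficient is affine in $(x,\th)$ and in the measure variable with bounded $\mathbb F^0$-measurable matrix coefficients. I would first check $\mathbf{(C1)}$: the Lipschitz estimates follow from $\mathbf{(H1)}$, $\mathbf{(H2)}$ and the boundedness of $R^{-1},(R+\bar R)^{-1}$, the integrability of the free terms $b_s,\si_s,\g_s,\mathbbm q+\bar{\mathbbm q},r+\bar r$ comes from $\mathbf{(H1)}$-(ii) and $\mathbf{(H2)}$-(ii), and $\Phi(0,\delta_0,\imath)=x_t\in L^2_{\cF_t}$ with $l_{\Phi y}=l_{\Phi\upsilon}=0$ since $\Phi$ is constant. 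Condition $\mathbf{(C2)}$ then reduces to $\mathbf{(H3)}$: part (i) is the spectral bound $\lambda_{\max}(\sA+\sA^\top+\sC^\top\sC+\sM^\top\sM)\le\k_x$ and its mean-field analogue giving $\k_{x\mu}$ (only $\sA,\bar\sA,\sC,\bar\sC,\sM,\bar\sM$ enter, as the $\Pi$-terms depend on $\th$ not on $x$), while part (ii) holds because the direct $y$-dependence of $f$ is carried by $(\k\mathrm I_n+\sA)^\top$, so $\mathbf{(H3)}$-(ii) provides $\k_y,\k_{y\nu}$.

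The core of the proof is $\mathbf{(C3.1)}$, which I would verify in Case 1 ($\b_2=0$). Because $\Phi$ is constant I take $\phi\equiv0$, trivializing both the $\Phi$-domination and the $\Phi$-monotonicity, and $\psi\equiv0$ since $\b_2=0$; it then remains to exhibit $\b_1>0$ and a nonnegative $\varphi$ controlling the diffusion coefficients. The decisive step is to expand the monotonicity quantity $\dbE[\langle -f(X,\Theta)+f(\bar X,\bar\Theta),X-\bar X\rangle+\langle\G(X,\Theta)-\G(\bar X,\bar\Theta),\Theta-\bar\Theta\rangle]+(-\tfrac{\k_x}{2}+\k_y)\dbE[\langle X-\bar X,Y-\bar Y\rangle]$. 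Writing the increments as $\delta x,\delta y,\delta z,\delta z^0$, the linear blocks generated by $\sA,\sC,\sM$ (and, after taking $\dbE$ and using the $\mathbb F^0$-measurability of the matrices together with the tower property, their bars $\bar\sA,\bar\sC,\bar\sM$) pair off against the matching blocks in $\G$ and cancel; crucially the cross term $(-\tfrac{\k_x}{2}+\k_y)\langle\delta x,\delta y\rangle$ cancels the $\k\langle\delta y,\delta x\rangle$ coming from the $\k\mathrm I_n$ in $(\k\mathrm I_n+\sA)^\top y$, which is precisely why the hypothesis fixes $\k=-\tfrac{\k_x}{2}+\k_y$.

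After these cancellations the quantity collapses to $\dbE$ of $-\langle\sQ\,\delta x,\delta x\rangle-\langle\bar\sQ\,\dbE^1[\delta x],\dbE^1[\delta x]\rangle$ minus a positive quadratic form in the control increment $\delta\mathbbm u$ (the difference of the feedback \eqref{equ-MFLQ-19}), built from $R$ acting on $\delta\mathbbm u-\dbE^0[\delta\mathbbm u]$ and from $R+\bar R$ acting on $\dbE^0[\delta\mathbbm u]$. Invoking the equivalent form of $\mathbf{(PD)}$ recorded just before the statement, $R\gg0$, $R+\bar R\gg0$, $\sQ\ge0$, $\sQ+\bar\sQ\ge0$, together with the decomposition $\dbE[\langle\sQ\,\delta x,\delta x\rangle]=\dbE[\langle\sQ(\delta x-\dbE^1\delta x),\delta x-\dbE^1\delta x\rangle]+\dbE[\langle\sQ\,\dbE^1\delta x,\dbE^1\delta x\rangle]$, all the state terms are nonnegative and the control form dominates $\delta_R\dbE[|\delta\mathbbm u|^2]$ for some $\delta_R>0$. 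Choosing $\varphi$ to be a suitable positive multiple of $|\delta\mathbbm u|^2$ (which is quadratic in $\delta\th$ and its conditional means, hence of the required Lipschitz form) and $\b_1>0$ small enough delivers simultaneously the diffusion domination $\dbE[|\mathbf h(X,\Theta)-\mathbf h(X,\bar\Theta)|^2]\le\tfrac1{\b_1}\dbE[\varphi]$, since each $\mathbf h(X,\Theta)-\mathbf h(X,\bar\Theta)$ is a bounded linear function of $\delta\mathbbm u$ and $\dbE^0[\delta\mathbbm u]$, and the monotonicity bound $\le-\b_1\dbE[\varphi]$. The main obstacle is exactly this calculation: tracking the conditional-expectation terms so that every linear block cancels, and then splitting $\delta\mathbbm u$ into its $\dbE^0$-fluctuation and $\dbE^0$-mean parts so that $R\gg0$ and $R+\bar R\gg0$ act on the correct components.

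Finally I would check that $\k=-\tfrac{\k_x}{2}+\k_y$ satisfies \eqref{equ-FBSDE-3}: the hypothesis gives $\k\in(\underline\k,\overline\k)$, and $-\tfrac{\k_x}{2}+\k_y\le\k<-\tfrac{\k_x}{2}+\k_y+\varrho$ holds since the left inequality is an equality and $\varrho>0$ is supplied by Theorem \ref{Th-FBSDE}. That theorem then yields the unique solution $(\dbX,\dbY,\dbZ,\dbZ^0,\dbK)\in L^{2,\k}_\dbF(t,\i;\dbR^n)\times\cL^{2,\k}_\dbF(t,\infty)$ together with the estimate \eqref{equ-FBSDE-4}. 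To identify the optimal control, I would read off $u^\ast$ from \eqref{equ-MFLQ-u-opt}, which is the transformed feedback $\mathbbm u^\ast$ of \eqref{equ-MFLQ-19} composed with the inverse of the control transformation \eqref{u-trans}; the $L^{2,\k}$ bounds on $(\dbX,\dbY,\dbZ,\dbZ^0)$ give $u^\ast\in\mathcal U_{ad}^{\k}[t,\infty)$, and Lemma \ref{Le-MFLQ-1} together with the convexity ensured by $\mathbf{(PD)}$ identifies $u^\ast$ as the unique open-loop optimal control of Problem (C)$^\k_S$ at $(t,x_t,\iota)$, the uniqueness also being inherited from that of the Hamiltonian system.
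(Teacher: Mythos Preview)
Your proposal is correct and follows essentially the same approach as the paper: identify \eqref{equ-MFLQ-Hamiltonian} as a linear instance of \eqref{equ-FBSDE-1}, verify $\mathbf{(C1)}$, $\mathbf{(C2)}$, $\mathbf{(C3.1)}$ with $\b_2=0$, $\phi\equiv0$, $\psi\equiv0$, apply Theorem~\ref{Th-FBSDE}, and then undo the control transformation \eqref{u-trans} to recover $u^\ast$. The paper is actually terser than you---it declares the verification ``straightforward'' and only records the explicit choices $\b_1=\min\{1/L_1,L_2\}$ and $\varphi(s,\th,\bar\th,\nu,\bar\nu,\imath)=|\rho_1(\th-\bar\th)-\int\rho_1\,d(\nu-\bar\nu)|^2+|\int\rho_2\,d(\nu-\bar\nu)|^2$ with $\rho_1=B^\top y+D^\top z+N^\top z^0$, $\rho_2=(B+\bar B)^\top y+(D+\bar D)^\top z+(N+\bar N)^\top z^0$, which is equivalent to (not literally a multiple of) your $|\delta\mathbbm u|^2$; your more detailed account of the cancellations in the monotonicity computation is a welcome expansion of what the paper leaves implicit.
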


\begin{proof}
Observe that \rf{equ-MFLQ-Hamiltonian} is   a linear form of the general  system \rf{equ-FBSDE-1}. Therefore, it suffices to verify that the assumptions of Theorem \ref{Th-FBSDE}  are satisfied. To this end, for $(s,\th,\bar\th,\nu,\bar \nu )\in [0,\infty)\times\mathscr{R}\times\mathscr{R}\times\mathscr{P}_2(\mathscr{R} )\times \mathscr{P}_2(\mathscr{R})  $,
 $(  \upsilon, \bar{\upsilon})\in  \mathscr{P}_2(\mathbb{R}^n ) \times\mathscr{P}_2(\mathbb{R}^n)$  and $ \imath\in \textbf{M}$,  we set
\begin{equation*}\label{equ-MFLQ-abbreviation}
\begin{aligned}
&\b_1= \min\Big\{ \frac1{L_1}, L_2\Big\},\q \b_2=0,\q \phi(y, \bar{y}, \upsilon, \bar{\upsilon}, \imath)\equiv0, \quad \psi(s,\th,\bar\th,\nu,\bar \nu, \imath ) \equiv0,\\
& \varphi(s,\th,\bar\th,\nu,\bar \nu, \imath ) :=\Big|\rho_1(s,\th-\bar \th,\imath)-\( \int_{\dbR^n\times\sR }\rho_1(s,\tilde\th,\imath)\mathrm d \nu (\tilde x,\tilde \th)-\int_{\dbR^n\times\sR}\rho_1(s,\tilde\th,\imath)\mathrm d \bar \nu (\tilde x,\tilde \th) \)  \Big|^2\!\\
& \hskip 3.2cm
+\Big|\int_{\dbR^n\times\sR }\rho_2(s, \tilde \th,\imath)\mathrm d \nu (\tilde x,\tilde \th)-\int_{\dbR^n\times\sR}\rho_2(s, \tilde \th,\imath)\mathrm d \bar\nu (\tilde x,\tilde \th) \Big|^2,
\end{aligned}
\end{equation*}
  where
$$\left\{
\begin{aligned}
&\rho_1(s,\th,\imath):=B^\top y+D^\top z+N ^\top z^0, \\
&\rho_2(s,\th,\imath):=(B+\bar B)^\top y+(D+\bar D)^\top z+ (N+\bar N)^\top z^0, \\
&L_{1}:=2\max\Big\{ \max \limits_{ \imath\in\textbf M }\|B R^{-1}(\cdot, \imath) \|^2,\ \max \limits_{ \imath\in\textbf M }\|D R^{-1}(\cdot, \imath) \|^2, \max \limits_{ \imath\in\textbf M }\|N R^{-1}(\cdot, \imath) \|^2, \\
&\hskip 2.4cm   \max \limits_{ \imath\in\textbf M } \|(B+\bar B) (R+\bar R)^{-1} (\cdot, \imath)\|^2, \ \max \limits_{ \imath\in\textbf M }\|(D+\bar D) (R+\bar R)^{-1}(\cdot, \imath) \|^2,\\
 &\hskip 2.4cm  \max \limits_{ \imath\in\textbf M }\|(N+\bar N) (R+\bar R)^{-1}(\cdot, \imath) \|^2\Big\},\\
&L_{2}:= \min\Big\{\mathop{\essinf}\limits_{(s, \omega^{0}, \imath)}\lambda_{\min } R\,^{-1} (s, \imath ) ,
\mathop{\essinf}\limits_{(s, \omega^{0}, \imath)}\lambda_{\min } (R+\bar R)^{-1}(s,\imath)\Big\}.
\end{aligned}\right.
$$
With   $\ds \k= -\frac{{\k}_x}{2}+{\k}_{y}\in (\underline{{\k}},\overline {\k})$ and  $(\mathbf{PD})$, it is straightforward to verify that the coefficients $b$, $\si$, $\wt\si$ and $f$  in \eqref{b-si-f}     satisfy   $\mathbf{(C1)}$, $\mathbf{(C2)}$ and $\mathbf{(C3.1)}$. The unique solvability of   \rf{equ-MFLQ-Hamiltonian} then follows directly from Theorem  \ref{Th-FBSDE}.
Therefore, we conclude that  $\mathbbm u^{\ast}_\cd$ in   \eqref{equ-MFLQ-19} lies in $ \mathcal{U}_{ad}^{{\k}}[t, \infty)$ and is the unique open-loop optimal control   of Problem (C)$^\k$   at $(t, x_t, \iota)$ .
Furthermore, by applying the transformation   \rf{u-trans}, $u^\ast_\cd$ given in \rf{equ-MFLQ-u-opt} is consequently the unique open-loop optimal control of Problem (C)$_S^\k$   at $(t, x_t, \iota)$.
\end{proof}

Based on the preceding results, we observe that the presence of  $S(\cd,\cd)$  and $\bar S(\cd,\cd) $ indeed influences condition  {\bf(H3)}.
 When $S(\cd,\cd)=\bar S(\cd,\cd)\equiv0$, {\bf(H3)} simplifies to
\ss

\no {\bf(H3)$^0$}
(i) $\left\{\2n
\ba{ll}
\ns\ds  \k^0_x \ges  \mathop{\esssup}\limits_{(s, \omega^0, \imath) } \lambda_{\max }\big(A+A^{\top} + C^{\top}C+M^{\top}M\big)(s, \omega^0, \imath),\\
\ns\ds \k^0_{x\nu}  \ges  \mathop{\esssup}\limits_{(s, \omega^0, \imath) }
 \lambda_{\max }\big(\bar A  +  \bar A    ^\top +  ( C+ \bar C)  ^\top( C+ \bar C)    - C^\top C\\
\ns\ds  \hskip 3.5cm +  (M+ \bar M)  ^\top( M+ \bar M)   - M^\top M \big)(s,\omega^0,\imath);
\ea\right.
$

  \ss

\q\  (ii)  $
\left\{\2n
\ba{ll}
\ns\ds \kappa_{y} ^0\ges \kappa+\frac{1}{2}\mathop{\esssup}\limits_{(s, \omega^0, \imath)  }
 \lambda_{\max }\big(A+A^{\top}\big)(s, \omega^0, \imath),\\
\ns\ds  \kappa_{y\nu}^0 \ges \frac{1}{2} \mathop{\esssup}\limits_{(s, \omega^0, \imath)  }
 \lambda_{\max }\big(\bar{A} +\bar A^{\top}\big)(s, \omega^0, \imath),
\ea\right.
$ \\
and  \rf{k} becomes
$$\left\{\2n\ba{ll}
 \ns \ds\overline{\kappa}^0:= \min\{-{\k}_x^0-{\k}_{x\nu}^0 \mathbf{1}_{\{{\k}_{x\nu}^0 >0\}},\k^{\ast}\},\\
\ns\ds
\underline{\kappa}^0:=2 (\k_{y}^0+ \k_{y \nu}^0\mathbf{1}_{\{\k_{y \nu}^0>0\}}+K^0 ),\\
\ns\ds K^0:= \max\limits_{  \imath\in\textbf{M}}\Big\{\Vert {Q} (\cdot, \imath)\Vert^{2}
+\Vert {C}(\cdot, \imath)\Vert^{2}+\Vert{M} (\cdot, \imath)\Vert^{2}
+\max\{\Vert   \bar{{Q}} (\cdot, \imath)\Vert^{2}, \Vert \bar{{C}} (\cdot, \imath)\Vert^{2},
\Vert\bar{{M}} (\cdot, \imath)\Vert^{2}\}\Big\}.\!\!\!\!
\ea\right.
$$
Therefore, under  $(\mathbf{PD})$, when  $S(\cd,\cd)=\bar S(\cd,\cd)\equiv0$ and  $\ds \k= -\frac{{\k}_x^0}{2}+{\k}_{y}^0\in (\underline{{\k}}^0,\overline {\k}^0)$,   Problem (C)$^\k_0$ admits a unique open-loop optimal control $ u^{\ast}_\cd\in \mathcal{U}_{ad}^{{\k}}[t, \infty)$ at $(t, x_t, \iota)$, give by 
  \begin{equation*}\label{equ-MFLQ-u-opt-0}
 \begin{aligned}
&u^\ast  = -R^{-1}\Big(    B^{\top}(\dbY - \mathbb{E}^0_s[\dbY])  +  D^{\top}(\dbZ-\mathbb{E}^0_s[\dbZ])
 +  N^{\top}( \dbZ^0  - \mathbb{E}^0_s[ \dbZ^0])  + r + \bar{r} - \mathbb{E}^0_s[r + \bar{r}])\Big)\\
&\qq -(R + \bar{R})^{-1}\Big(  (B + \bar{B})^{\top} \mathbb{E}^0_s[\dbY] + (D + \bar{D})^{\top} \mathbb{E}^0_s[\dbZ]
 + (N + \bar{N})^{\top} \mathbb{E}^0_s[ \dbZ^0]  + \mathbb{E}^0_s[r + \bar{r}]\Big)  ,
 \end{aligned}
  \end{equation*}
 where $(\dbX ,\dbY,\dbZ,\dbZ^0 , \dbK ) $ is the solution of the  renew version of system \rf{equ-MFLQ-Hamiltonian} with  $S(\cd,\cd)=\bar S(\cd,\cd)\equiv0$.

\br{}\sl
 The above analysis reveals that under   $(\mathbf{PD})$,   the condition $S(\cd,\cd)=\bar S(\cd,\cd)\equiv0$ directly affects  the space in which the optimal control exists--a phenomenon first noted in \cite{WY-2021}.
 This represents a major difference from the finite-horizon problem. We further  observe that if the positive semidefiniteness condition  {\bf(PD)} is strengthened to strict positive definiteness, this distinction may disappear, as illustrated  in \cite{Hua-Luo}.

\er

%
%
%
%
%
%
\subsection{Infinite horizon  linear quadratic  mean field  game problems } 
\label{sub:mf_lqg}

In this part, we study an infinite horizon linear quadratic mean field game problem.
Let a pair of stochastic processes  $(\BX , \Bu  )  \in L_{\mathbb{F}^0}^{2, \kappa}(t, \infty; \mathbb{R}^n)\times L_{\mathbb{F}^0}^{2, \kappa}(t, \infty; \mathbb{R}^m)$ be given.
For any initial triple $(t, x_{t}, \iota)\in \sD^2$,   the state equation is given by
\begin{equation}\label{equ-MFLQG-2}
\left\{\2n
\begin{aligned}
& \mathrm dX_s=\(A (s, \alpha_{s}) X_s + \bar{A} (s, \alpha_{s})\BX   _s
+ B (s, \alpha_{s}) u_{s}+\bar{B}(s, \alpha_{s})\Bu_{s}+b_s\) \, \mathrm ds\\
& + \(C(s, \alpha_{s}) X_s+\bar{C}(s, \alpha_{s})\BX _s
+D(s, \alpha_{s}) u_{s}
+\bar{D}(s, \alpha_{s})\Bu_{s}+\sigma_s\) \, \mathrm dW_{s} \\
&+ \(M(s, \alpha_{s}) X_s+\bar{M}(s, \alpha_{s})\BX _s+N(s, \alpha_{s}) u_{s}
+\bar{N}(s, \alpha_{s})\Bu_{s}+\gamma_s\) \, \mathrm d W^0 _{s}, \ s \ges t,\\
& X_t=x_{t}, \quad
\alpha_t=\iota.
\end{aligned}\right.
\end{equation}
and the cost functional is introduced as
\begin{equation}\label{equ-MFLQG-1}
\begin{array}{lll}
\ns\ds \BJ ^{\kappa}(t, x_{t},\iota, \BX, \Bu ; u_\cd)=
\dfrac{1}{2} \mathbb{E} \ds\int_{t}^{\infty} \mathrm e^{\kappa s}\[\langle Q(s, \alpha_{s}) X_s, X_s\rangle
+2\langle S(s, \alpha_{s}) X_s, u_{s}\rangle
+\langle R(s, \alpha_{s}) u_{s}, u_{s}\rangle\\
\ns\ds \hskip1.3cm
+2\langle \bar{Q}(s, \alpha_{s}) \BX_s, X_s\rangle
+2\langle \bar{S}_{1}(s, \alpha_{s})\BX_s, u_{s}\rangle
+2\langle \bar{S}_{2}(s, \alpha_{s}) X_s, \Bu_{s}\rangle
+2 \langle \bar{R}(s, \alpha_{s}) \Bu_{s}, u_{s}\rangle \\
\ns\ds \hskip1.3cm
+2\langle q(s, \alpha_{s}), X_s\rangle+2\langle \bar{q}(s, \alpha_{s}), \BX_s \rangle
+2\langle r(s, \alpha_{s}), u_s\rangle+2\langle \bar{r}(s, \alpha_{s}), \Bu_{s}\rangle\] \mathrm ds,
\end{array}
\end{equation}
subject to $u_\cd\in \cU_{ad}^\k[t,\i)$ for  some suitable $\kappa\in \mathbb{R}$.
The game problem is then formulated as follows.

\ms

$\textbf{Problem (G)$_S^\k$}$
For any $(t, x_{t}, \iota)\in \sD^2$ and  $(\BX , \Bu  )  \in L_{\mathbb{F}^0}^{2, \kappa}(t, \infty; \mathbb{R}^n)\times L_{\mathbb{F}^0}^{2, \kappa}(t, \infty; \mathbb{R}^m)$,  find an admissible control $\bar{ {u}} \in \mathcal{U}_{a d}^{\kappa}[t, \infty)$ such that
\begin{equation}\label{equ-MFLQG-u1}\ba{ll}
\ns\ds
\BJ ^{\kappa}(t, x_{t}, \iota, \BX, \Bu; \bar{u}_{\cdot})
=\inf_{u \in \mathcal{U}_{a d}^{\kappa}[t, \infty)} \BJ ^{\kappa}(t, x_{t} , \iota, \BX, \Bu ; u_{\cdot}),
\ea\end{equation}
and
\begin{equation}\label{equ-MFLQG-u2}\ba{ll}
\ns\ds   \mathbb{E}^0_{s}[\bar{ X }_s]= \BX_s, \
\mathbb{E}^0_{s}[\bar{ {u}}_s]=\Bu_s,
\ea\end{equation}
where   $\bar{ X } \equiv X(\cd;t, x_{t}, \iota,\bar{ {u}} )$  denotes the solution of \eqref{equ-MFLQG-2} corresponding to $\bar{ {u}} $.
Such a control  $\bar{{u}}_{\cd}$ is called a   {\it mean field Nash equilibrium} for Problem (G)$^\k_S$ at $(t, x_{t}, \iota)$.
In the special case where  $S(\cd,\cd)=\bar {S}_1(\cd,\cd)=\bar {S}_2(\cd,\cd)\equiv0$,  the problem is referred to as Problem (G)$^{\k}_0$.

Indeed,  \rf{equ-MFLQG-u1} defines a  mapping  $ \cT_1 :  L_{\mathbb{F}^0}^{2, \kappa}(t, \infty; \mathbb{R}^n \times \mathbb{R}^m) \longmapsto L_{\mathbb{F} }^{2, \kappa}(t, \infty; \mathbb{R}^n \times \mathbb{R}^m) $ such that
 $$ \cT_1(\BX,\Bu )=(\bar X ,\bar{{u}} ). $$
Now,   introduce another mapping
$$\cT_2   : L_{\mathbb{F} }^{2, \kappa}(t, \infty; \mathbb{R}^n \times \mathbb{R}^m) \longmapsto L_{\mathbb{F} ^0}^{2, \kappa}(t, \infty; \mathbb{R}^n \times \mathbb{R}^m), $$
  defined by  $ \cT_2 (X ,u )(s)=(\dbE_s^0[X_s],\dbE_s^0[u_s])$, $s\in [t,\i)$.  
    Let  $\cT:=\cT_1\circ\cT_2$.
 From this  viewpoint, the objective  of Problem (G)$_S^\k$ is to find a pair  $(\bar X,\bar u)$ satisfying
  $$\cT(\bar X,\bar u)=(\bar X,\bar u),$$
  i.e.,  a fixed point  of $\cT$.

We now decompose Problem   (G)$_S^\k$ into two subproblems: the first is to   characterize the pair $(\bar X,\bar  u)$ satisfying \rf{equ-MFLQG-u1}, and the second is to    establish the existence and uniqueness of a fixed point of $\cT$.
Fix $(\BX , \Bu  )  \in L_{\mathbb{F}^0}^{2, \kappa}(t, \infty; \mathbb{R}^n)\times L_{\mathbb{F}^0}^{2, \kappa}(t, \infty; \mathbb{R}^m)$ be fixed.
We begin by addressing the first subproblem.

\ms

$\textbf{Problem (G1)$_S^\k$}$
For any $(t, x_{t}, \iota)\in \sD^2$, find an admissible control $u^\ast \in \mathcal{U}_{a d}^{\kappa}[t, \infty)$ with corresponding state process $\cX  $ such that
\begin{equation}\label{equ-MFLQG-3}
\BJ ^{\kappa}(t, x_{t}, \iota, \BX, \Bu; \bar{u} )
=\inf_{u \in \mathcal{U}_{a d}^{\kappa}[t, \infty)} \BJ ^{\kappa}(t, x_{t} , \iota, \BX, \Bu; u ) .
\end{equation}
We note that Problem  (G1)$_S^\k$ is in fact a special case of  Problem (C)$^\k_S$.  Specifically, for a given pair $(\BX, \Bu)$, define
\bel{Bb}\ba{ll}
\ns\ds \Bb_s:=\bar{A} (s, \alpha_{s})\BX   _s+\bar{B}(s, \alpha_{s})\Bu_{s}+b_s,\\
\ns\ds    \bm{\sigma}_s :=\bar{C}(s, \alpha_{s})\BX _s
+\bar{D}(s, \alpha_{s})\Bu_{s}+\sigma_s,\\
\ns\ds  \bm{\g}_s :=\bar{M}(s, \alpha_{s})\BX _s
+\bar{N}(s, \alpha_{s})\Bu_{s}+\gamma_s,\\
\ns\ds \Bq(s, \alpha_{s}):=\bar{Q}(s, \alpha_{s}) \BX_s + \bar{S}_{2}(s, \alpha_{s})^\top \Bu_{s} + q(s, \alpha_{s}),\\
\ns\ds \Br(s, \alpha_{s}):= \bar{S}_{1}(s, \alpha_{s})\BX_s+\bar{R}(s, \alpha_{s}) \Bu_{s}+r(s, \alpha_{s}),

\ea\ee
Then the state equation \rf{equ-MFLQG-2} and the cost  functional  \rf{equ-MFLQG-1} can be rewritten as
\begin{equation}\label{equ-MFLQG-state}
\left\{\2n
\begin{aligned}
& \mathrm dX_s=\(A (s, \alpha_{s}) X_s
+ B (s, \alpha_{s}) u_{s} +\Bb_s\) \, \mathrm ds  + \(C(s, \alpha_{s}) X_s
+D(s, \alpha_{s}) u_{s}+ \bm{\sigma}_s\) \, \mathrm dW_{s} \\
&\qq\q+ \(M(s, \alpha_{s}) X_s+N(s, \alpha_{s}) u_{s}+ \bm{\g}_s \) \, \mathrm d W^0 _{s}, \ s \ges t,\\
& X_t=x_{t}, \quad
\alpha_t=\iota,
\end{aligned}\right.
\end{equation}
and
\begin{equation}\label{equ-MFLQG-cost}
\begin{array}{lll}
\ns\ds \BJ ^{\kappa}(t, x_{t},\iota, \BX, \Bu ; u_\cd)=
\dfrac{1}{2} \mathbb{E} \ds\int_{t}^{\infty} \mathrm e^{\kappa s}\[\langle Q(s, \alpha_{s}) X_s, X_s\rangle
+2\langle S(s, \alpha_{s}) X_s, u_{s}\rangle
+\langle R(s, \alpha_{s}) u_{s}, u_{s}\rangle\\
\ns\ds  \qq\qq\qq\qq
+2\langle\Bq(s, \alpha_{s}), X_s\rangle
+2\langle  \Br(s, \alpha_{s}), u_s\rangle
 +2\langle \bar{q}(s, \alpha_{s}), \BX_s \rangle+2\langle \bar{r}(s, \alpha_{s}), \Bu_{s}\rangle\] \mathrm ds.
\end{array}
\end{equation}
%
%
%
%
%
For convenience, we define
\bel{kC}\left\{\2n\ba{ll}
 \ns \ds\overline{\Bk}:= \min\{-{\k}_x,\k^{\ast}\},\\
\ns\ds
\underline{\Bk}:=2 \(\k_{y}+\max\limits_{  \imath\in\textbf{M}}\{\Vert \mathscr{Q} (\cdot, \imath)\Vert^{2}
+\Vert \mathscr{C}(\cdot, \imath)\Vert^{2}+\Vert \mathscr{M} (\cdot, \imath)\Vert^{2}\}\).

\ea\right.\ee
Based on the above observation, we apply Theorem \ref{Pro-MFLQ} to  derive the following result.

\begin{lemma}\label{Le-MFLQG-1}\sl
Suppose assumptions $\mathbf{(H1)}$-$\mathbf{(H3)}$ and $\mathbf{(PD)}$-(i) hold.  If   $\ds \k=-\frac{{\k}_x}{2}+{\k}_{y} \in ( \underline{\Bk},\overline{\Bk})$,  and   for each  $\imath\in \mathbf{M}$,  $\bar{S}_1(\cd,\imath), \bar{S}_2(\cd,\imath)\in L^{\infty}_{\mathbb{F}^0}(t, \infty ;\mathbb{R}^{m \times n})$, then
for any given pair $(\BX, \mathbf{u})\in L_{\mathbb{F}^0}^{2, \kappa}(t, \infty; \mathbb{R}^n \times \mathbb{R}^m)$
and any initial triple $(t, x_{t}, \iota) \in \sD^2$,
the  following Hamiltonian  system   associated with  Problem (G1)$_S^\k$, given by
\begin{equation}\label{equ-MFLQG-Ham-1}
\left\{\2n
\begin{aligned}
& \mathrm d\cX_s=\(\sA\cX _s
   -B  R^{-1} \big(B^{\top} \cY _s +D^{\top} \cZ _s
+N^{\top}  \cZ ^0_{s}+ \Br_s\big)+\Bb_s   \) \, \mathrm ds  \\
&\qq\q + \(\sC \cX _s
-D R^{-1} \big(B^{\top} \cY _s +D^{\top} \cZ _s
+N^{\top}  \cZ ^0_{s} + \Br_s\big)+ \bm{\sigma} _s\) \, \mathrm dW_{s} \\
&\qq\q+ \(\sM \cX _s-NR^{-1} \big(B^{\top} \cY _s +D^{\top} \cZ _s
+N^{\top}  \cZ ^0_{s}+ \Br_s\big)+ \bm{\gamma} _s \) \, \mathrm d W^0 _{s}, \ s \ges t,\\
&\mathrm d\cY _{s}
= -\(\big(\kappa \mathrm I_n + \sA\big)^{\top} \cY  _{s}
+ \sC^{\top} \cZ _{s} + \sM^{\top} \cZ  ^0_{s}
+\sQ \bar{X } _{s}+\Bq_s-S^\top R^{-1}\Br_s  \) \, \mathrm ds\\
&\hskip1cm
+\cZ _{s}\, \mathrm dW_{s}
+ \cZ ^0_{s} \, \mathrm d W^0 _{s}
+ \cK_{s} \circ\mathrm d M_s, \  s\ges t,\\
& \cX _t=x_{t},\
\alpha_t=\iota,
\end{aligned}\right.
\end{equation}
 admits a unique solution
$(\cX  , \cY ,\cZ ,\cZ ^0, \cK) \in  L_\dbF^2(t,\i;\dbR^n)\times\cL_{\mathbb{F}}^{2, {\k}}(t, \infty)$.
 In this case, the optimal control of Problem  (G1)$_S^\k$ is given by 
\bel{G-opt-u}  u^\ast  
 =-R^{-1} \(S \cX  +B^{\top} \cY  +D^{\top} \cZ
+N^{\top}  \cZ ^0+  \bar{S}_{1}\BX
+ \bar{R}\Bu+r\) . \ee

\end{lemma}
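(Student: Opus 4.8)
The plan is to recognize Problem (G1)$_S^\k$ as a special instance of Problem (C)$_S^\k$ and then to invoke Theorem \ref{Pro-MFLQ}. With the fixed pair $(\BX,\Bu)$ absorbed into the inhomogeneous data $\Bb,\bm{\sigma},\bm{\g},\Bq,\Br$ according to \rf{Bb}, the state equation \rf{equ-MFLQG-2} and the cost \rf{equ-MFLQG-1} become the homogeneous-coefficient system \rf{equ-MFLQG-state}--\rf{equ-MFLQG-cost}, in which every conditional-mean (\emph{bar}) coefficient $\bar A,\dots,\bar N$ and $\bar Q,\bar S,\bar R$ is absent. Thus Problem (G1)$_S^\k$ is exactly Problem (C)$_S^\k$ with all mean-field matrices set to zero and $(b,\sigma,\g,q,r)$ replaced by $(\Bb,\bm{\sigma},\bm{\g},\Bq,\Br)$, and the associated Hamiltonian system is precisely \rf{equ-MFLQG-Ham-1}. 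It therefore suffices to check the hypotheses of Theorem \ref{Pro-MFLQ} for this reduced problem.

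First I would verify the data membership. Since $\bar A(\cd,\imath),\dots,\bar N(\cd,\imath)$ and, by the extra assumption, $\bar S_1(\cd,\imath),\bar S_2(\cd,\imath)$ lie in $L^{\infty}_{\mathbb{F}^0}$, while $(\BX,\Bu)\in L_{\mathbb{F}^0}^{2, \k}$ and $b,\sigma,\g,q,r\in L_{\mathbb{F}}^{2,\k^\ast}\subset L_{\mathbb{F}}^{2,\k}$ (because $\k<\overline{\Bk}\le\k^\ast$), each term of \rf{Bb} lies in $L_{\mathbb{F}}^{2,\k}$. Next I would compute the structural constants: as the bar-matrices vanish, the transformed coefficients satisfy $\bar\sA=\bar\sC=\bar\sM=\bar\sQ\equiv0$ (for instance $\bar\sA=BR^{-1}S-BR^{-1}S=0$), so in $\mathbf{(H3)}$ one may take $\k_{x\mu}=\k_{y\nu}=0$, and the thresholds $\overline{\kappa},\underline{\kappa}$ of Theorem \ref{Pro-MFLQ} collapse exactly to $\overline{\Bk},\underline{\Bk}$ of \rf{kC}; the standing hypothesis $\k=-\frac{\k_x}{2}+\k_y\in(\underline{\Bk},\overline{\Bk})$ is then precisely the admissible range. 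Finally, the Schur complement of $\mathbf{(PD)}$-(i) gives $R\gg0$ and $\sQ=Q-S^\top R^{-1}S\ges0$; since $\bar R=0$ and $\bar\sQ=0$, the full positivity required by Theorem \ref{Pro-MFLQ} reduces to $\mathbf{(PD)}$-(i). Hence $\mathbf{(C1)}$, $\mathbf{(C2)}$ and $\mathbf{(C3.1)}$ all hold.

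The step I expect to be the main obstacle is a borderline-integrability subtlety: the inhomogeneous terms $\Bb,\bm{\sigma},\bm{\g},\Bq,\Br$ inherit from $(\BX,\Bu)$ only $L_{\mathbb{F}}^{2,\k}$-integrability, rather than the strictly smaller $L_{\mathbb{F}}^{2,\k^\ast}$-integrability ($\k^\ast>\k$) enjoyed by the original data. Consequently one cannot quote Theorem \ref{Pro-MFLQ} verbatim, as its a priori estimate carries the weight $\mathrm e^{\k^\ast s}$ on the forcing and would be vacuous here. The remedy is to revisit the estimates on which that theorem rests---Corollary \ref{Cor-SDE-1}, Proposition \ref{Pr-BSDE-1} and Proposition \ref{Pr-FBSDE-1}---and to note that they remain valid at the borderline exponent $\k^\ast=\k$: in each, the forcing enters through an integral that may equally be weighted by $\mathrm e^{\k r}$, and these integrals stay finite because the strict dissipativity $\k<\overline{\Bk}\le-\k_x$ controls the forward part (via a Young/convolution bound keeping $\mathrm e^{(\k-\overline{\Bk})(s-t)}$ integrable) while $\k>\underline{\Bk}$ controls the backward part. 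With these borderline estimates established, the continuation scheme of Lemma \ref{Le-FBSDE-2} and Theorem \ref{Th-FBSDE} delivers a unique solution $(\cX,\cY,\cZ,\cZ^0,\cK)\in L_{\mathbb{F}}^{2,\k}(t,\infty;\dbR^n)\times\cL_{\mathbb{F}}^{2,\k}(t,\infty)$ of \rf{equ-MFLQG-Ham-1}.

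It then remains to identify the optimal control. Specializing the representation \rf{equ-MFLQ-u-opt} to the reduced problem---all bar-matrices zero and $r+\bar r$ replaced by $\Br=\bar S_1\BX+\bar R\Bu+r$---the conditional-expectation contributions in its two lines cancel, since $(R+\bar R)^{-1}=R^{-1}$ and $(S+\bar S)=S$, and the expression collapses to $u^\ast=-R^{-1}\big(S\cX+B^\top\cY+D^\top\cZ+N^\top\cZ^0+\Br\big)$, which is exactly \rf{G-opt-u}. Equivalently, this follows by applying the optimality condition of Lemma \ref{Le-MFLQ-1} directly to the reduced problem, which completes the argument.
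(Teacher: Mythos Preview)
Your proposal is correct and follows essentially the same route as the paper: identify Problem (G1)$_S^\k$ as the special case of Problem (C)$_S^\k$ in which all mean-field (bar) coefficients vanish, check that the thresholds $\overline{\kappa},\underline{\kappa}$ collapse to $\overline{\Bk},\underline{\Bk}$ and that $\mathbf{(PD)}$ reduces to $\mathbf{(PD)}$-(i), and then invoke Theorem \ref{Pro-MFLQ}. You are in fact more careful than the paper in flagging the borderline-integrability issue for the forcing terms $\Bb,\bm{\sigma},\bm{\g},\Bq,\Br$ (which inherit only $L_{\mathbb{F}}^{2,\k}$-integrability from $(\BX,\Bu)$), a point the paper passes over silently.
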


Based on the notation introduced in  \rf{Bb}, the system \rf{equ-MFLQG-Ham-1} can be expressed as follows
\begin{equation}\label{equ-MFLQG-Ham}
\left\{\2n
\begin{aligned}
& \mathrm d\cX_s=\(\sA\cX _s
   -B  R^{-1} \big(B^{\top} \cY _s +D^{\top} \cZ _s
+N^{\top}  \cZ ^0_{s} \big) \\
&\qq\qq\q +   (\bar{A}-B  R^{-1}\bar{S}_{1})\BX_s+(\bar{B}
-B  R^{-1} \bar{R})\Bu_s-B  R^{-1}r   +b_s\) \, \mathrm ds  \\
&\qq\q + \(\sC \cX _s
-D R^{-1} \big(B^{\top} \cY _s +D^{\top} \cZ _s
+N^{\top}  \cZ ^0_{s} \big)\\
&\qq\qq\q +( \bar{C} -D R^{-1}   \bar{S}_{1})\BX_s
+(\bar{D} -D R^{-1}\bar{R})\Bu_s-D R^{-1}r
\ +\sigma_s\) \, \mathrm dW_{s} \\
&\qq\q+ \(\sM \cX _s-NR^{-1} \big(B^{\top} \cY _s +D^{\top} \cZ _s
+N^{\top}  \cZ ^0_{s}\big)\\
&\qq\qq\q + (\bar{M} -NR^{-1}  \bar{S}_{1})\BX_s
+(\bar{N} -NR^{-1} \bar{R})\Bu_s -NR^{-1} r    +\gamma_s \) \, \mathrm d W^0 _{s}, \ s \ges t,\\
&\mathrm d\cY _{s}
= -\(\big(\kappa \mathrm I_n + \sA\big)^{\top} \cY  _{s}
+ \sC^{\top} \cZ _{s} + \sM^{\top} \cZ  ^0_{s}
+\sQ \bar{X } _{s}\\
&\qq\qq\q
+ (\bar{Q} -S^\top R^{-1}  \bar{S}_{1} )\BX_s
+( \bar{S}_{2}^{\top} -S^\top R^{-1} \bar{R} ) \Bu_{s}+q-S^\top R^{-1} r \) \, \mathrm ds\\
&\hskip1cm
+\cZ _{s}\, \mathrm dW_{s}
+ \cZ ^0_{s} \, \mathrm d W^0 _{s}
+ \cK_{s} \circ\mathrm d M_s, \  s\ges t,\\
& \cX _t=x_{t},\
\alpha_t=\iota,
\end{aligned}\right.
\end{equation}
From the  above, when $\ds \k=-\frac{{\k}_x}{2}+{\k}_{y} \in ( \underline{\Bk},\overline{\Bk})     $,
 the mapping $\cT_1$ is explicitly given by
 $$ \cT_1(\BX,\Bu )=(\cX ,u^\ast ) $$
where $ (\cX ,u^\ast)$ satisfies \rf{equ-MFLQG-Ham} and  \rf{G-opt-u}, resp.

 We now turn to the fixed point of $\cT$.
Obviously, a pair  $(\bar X,\bar u)$ is a fixed point of $\cT$  only and only if
  $$\cT_2(\bar X,\bar u)(s)=(\dbE^0_s[\bar X_s],\dbE^0_s[\bar u_s]),\mbox{ and } \cT_1(\dbE^0_s [\bar X_s],\dbE^0_s[\bar u_s])=(\bar X_s,\bar u_s),\q s\in [t,\i).$$
From this observation, the existence and uniqueness of a fixed point of $\cT$ is equivalent to that of the following system:
 \begin{equation}\label{equ-cT-1}
\left\{\2n
\begin{aligned}
& \mathrm d\bar X_s=\(\sA\bar X_s
   -B  R^{-1} \big(B^{\top} \bar{Y}_s +D^{\top} \bar{Z}_s
+N^{\top}  \bar{Z}^0_{s} \big) \\
&\qq\qq +   (\bar{A}-B  R^{-1}\bar{S}_{1})\dbE_s^0[\bar X_s]+(\bar{B}
-B  R^{-1} \bar{R})\dbE_s^0[\bar u_s]-B  R^{-1}r   +b_s\) \, \mathrm ds  \\
&\qq + \(\sC \bar X_s
-D R^{-1} \big(B^{\top} \bar{Y}_s +D^{\top} \bar{Z}_s
+N^{\top}  \bar{Z}^0_{s} \big)\\
&\qq\qq+( \bar{C} -D R^{-1}   \bar{S}_{1})\dbE_s^0[\bar X_s]
+(\bar{D} -D R^{-1}\bar{R})\dbE_s^0[\bar u_s]-D R^{-1}r
\ +\sigma_s\) \, \mathrm dW_{s} \\
&\qq+ \(\sM \bar X_s-NR^{-1} \big(B^{\top} \bar{Y}_s +D^{\top} \bar{Z}_s
+N^{\top}  \bar{Z}^0_{s}\big)\\
&\qq\qq+ (\bar{M} -NR^{-1}  \bar{S}_{1})\dbE_s^0[\bar X_s]
+(\bar{N} -NR^{-1} \bar{R})\dbE_s^0[\bar u_s] -NR^{-1} r    +\gamma_s \) \, \mathrm d W^0 _{s}, \ s \ges t,\\
&\mathrm d\bar{Y}_{s}
= -\(\big(\kappa \mathrm I_n + \sA\big)^{\top} \bar{Y} _{s}
+ \sC^{\top} \bar{Z}_{s} + \sM^{\top} \bar{Z} ^0_{s}
+\sQ \bar{X } _{s}\\
&\qq\qq
+ (\bar{Q} -S^\top R^{-1}  \bar{S}_{1} )\dbE_s^0[\bar X_s]
+( \bar{S}_{2}^{\top} -S^\top R^{-1} \bar{R} )\dbE_s^0[\bar u_s]-S^\top R^{-1} r+q_s \) \, \mathrm ds\\
&\hskip0,7cm
+\bar{Z}_{s}\, \mathrm dW_{s}
+ \bar{Z}^0_{s} \, \mathrm d W^0 _{s}
+ \bar{K}_{s} \circ\mathrm d M_s, \  s\ges t,\\
& \bar X_t=x_{t},\
\alpha_t=\iota,\\
\end{aligned}\right.
\end{equation}
where
 \bel{G-opt-u-3}\bar{ {u}} =-R^{-1} \(S\bar X +B^{\top} \bar{Y} +D^{\top} \bar{Z}
+N^{\top}  \bar{Z}^0 +\bar{S}_{1}\dbE_s^0[\bar X]
+ \bar{R}\dbE_s^0[\bar u]+r\).\ee

Indeed, under condition  $\mathbf{(PD)}$, the process $\bar u $ can be explicitly    expressed   as 
\bel{equ-MFLG-u-opt}\ba{ll}
\ns\ds  \bar  u   = -R^{-1}\Big(S^{\top}(\bar X - \mathbb{E}^0_s[\bar X])    +B^{\top}(\bar Y - \mathbb{E}^0_s[\bar Y])  +  D^{\top}(\bar Z-\mathbb{E}^0_s[\bar Z])
 +  N^{\top}( \bar Z^0  - \mathbb{E}^0_s[ \bar Z^0])  + r - \mathbb{E}^0_s[r])\!\Big)\\
 \ns\ds \qq -(R + \bar{R})^{-1}\Big( (S+\bar{S}_1)^{\top}(\bar X - \mathbb{E}^0_s[\bar X])+B^{\top} \mathbb{E}^0_s[\bar Y] + D^{\top} \mathbb{E}^0_s[\bar Z]
 + N^{\top} \mathbb{E}^0_s[ \bar Z^0]  + \mathbb{E}^0_s[r]\Big).
 \ea\ee
 %
 %
 This implies that  \rf {equ-cT-1} and \rf{G-opt-u-3}   together give rise to the following coupled system:
 \begin{equation}\label{equ-MFLG-Hamiltonian}
\left\{\3n
\ba{ll}
 \ds \mathrm d\bar X _s=b(s, \bar X_s,\bar Y_s,\bar Z_s,\bar Z^0_s,\cL^1_{( \bar X_s,\bar Y_s,\bar Z_s,\bar Z^0_s)},\a_s) \mathrm ds+ \si(s, \bar X_s,\bar Y_s,\bar Z_s,\bar Z^0_s,\cL^1_{( \bar X_s,\bar Y_s,\bar Z_s,\bar Z^0_s)},\a_s)  \mathrm dW_s \!\!\!\!\!\!\!\!\! \\
\ns\ds \hskip1cm
+ \wt \si(s, \bar X_s,\bar Y_s,\bar Z_s,\bar Z^0_s,\cL^1_{( \bar X_s,\bar Y_s,\bar Z_s,\bar Z^0_s)},\a_s)\,  \mathrm dW^0_{s}, \ s\ges t,\\
\ns\ds  \mathrm d\bar Y _s =
-f(s, \bar X_s,\bar Y_s,\bar Z_s,\bar Z^0_s,\cL^1_{( \bar X_s,\bar Y_s,\bar Z_s,\bar Z^0_s)},\a_s)\, \mathrm ds
+ \bar Z_s \, \mathrm dW_{s}
+ \bar Z^0 _{s} \, \mathrm d W^0 _{s}
+ \bar K _s \circ\mathrm d M_s, \  s\ges t, \\
\ns\ds  \bar X _{t}=x_{t}, \quad \alpha_{t}=\iota,
\ea\right.
\end{equation}
where, for   $(s,x,\th, \nu,\imath)\in [0,\i)\times {\dbR^n\times\sR }\times \sP_2(\dbR^n \times\sR )\times {\textbf M} $,
 \bel{b-si-f-G}\left\{\3n
\ba{ll}
 \ds b(s,x,\th, \nu,\imath):= b_s +\sA  x   + \tilde\sA   \int_{\dbR^n\times\sR } \tilde x \, \mathrm d\nu(\tilde x,\tilde \th)  +\L_  B (s, \th, \nu,\imath)    ,\\
 \ns \ds \si(s,x,\th, \nu,\imath):= \si_s +\sC  x  +\tilde\sC   \int_{\dbR^n\times\sR } \tilde x \, \mathrm d\nu(\tilde x,\tilde \th)  +\L_  D(s,  \th,\nu,\imath)    ,\\
 \ns  \ds \wt\si(s,x,\th, \nu,\imath):= \wt\si_s +\sM  x +\tilde\sM  \int_{\dbR^n\times\sR } \tilde x \, \mathrm d\nu(\tilde x,\tilde \th)  +\L_  N(s,  \th,\nu,\imath)    ,\\
 %


   \ns \ds   f(s,x,\th, \nu,\imath):= \sQ  x
+  \tilde{\sQ} \int_{\dbR^n\times\sR } \tilde x \, \mathrm d\nu(\tilde x,\tilde \th) +\big(\kappa \mathrm I_n + \sA \big)^{\top} y+\sC ^{\top} z + \sM^{\top} z^0
\\
\ns\ds \qq\qq\qq\q  + \int_{\dbR^n\times\sR }\big( \dbA^{\top}\tilde y+\dbC^{\top} \tilde z+\dbM ^{\top} \tilde {z}^0\big ) \mathrm d\nu(\tilde x,\tilde \th)+\dbE_s^0[r] +   q-S^\top R^{-1} r,
  \\
\ea
\right.\ee
 with%
  $$\ba{ll}
 \ns\ds \tilde\sA:=\bar{A}+B  R^{-1} {S}-(B+\bar  B)  (R+\bar R)^{-1}(S+\bar{S}_{1}) , \q \
 \dbA:=B R^{-1} {S}-B(R+\bar R)^{-1}(S+\bar{S}_{2}) ,\\
 \ns\ds \tilde\sC:=\bar{C}+D  R^{-1} {S}-(D+\bar  D)  (R+\bar R)^{-1}(S+\bar{S}_{1}) ,\q \
  \dbC:=D R^{-1} {S}-D(R+\bar R)^{-1}(S+\bar{S}_{2}) ,\\
 \ns\ds\tilde\sM:=\bar{M}+M  R^{-1} {S}-(M+\bar  M)  (R+\bar R)^{-1}(S+\bar{S}_{1}),  \
  \dbM :=N R^{-1} {S}-N(R+\bar R)^{-1}(S+\bar{S}_{2}) ,\\
 \ns\ds \tilde\sQ:= \bar{Q}+S^\top   R^{-1} {S}-(S+\bar{S}_2)^{\top}(R+\bar{R})^{-1}(S+\bar{S}_1),\\
 \ea$$
 and
 $$ \ba{ll}
 \ns \ds \ \L_  \Phi(s,  \th,\nu,\imath) \\
  \ns\ds :=-\Phi R^{-1}\[B^{\top}y+D^{\top}z+N^{\top} z^0   -\int_{\dbR^n\times\sR}\big(B^\top\tilde y+D^\top \tilde z+N^\top \tilde {z}^0\big ) \mathrm d\nu(\tilde x,\tilde \th)+r -\dbE_s^0[r]\] \\
\ns \ds\q      -(\Phi+\bar{\Phi})(R+\bar{R})^{-1}\[\int_{\dbR^n\times\sR}\big(B^{\top}\tilde y+D^{\top} \tilde z + N^{\top}  \tilde{z}^0 \big)\mathrm d\nu(\tilde x,\tilde \th) +\dbE_s^0[r]  \]   ,\q \Phi=B,D, N, \mbox{ resp.}
\ea
$$

We note that  \rf{equ-MFLG-Hamiltonian} remains  the  fully coupled FBSDEs. This naturally leads to the question of whether Theorem   \ref{Th-FBSDE} can be applied to  establish the wellposedness of \rf{equ-MFLG-Hamiltonian}. The careful analysis
shows that Theorem \ref{Th-FBSDE} is  indeed applicable to certain special cases of \rf{equ-MFLG-Hamiltonian}. We now examine the following case.
 \begin{description}
   \item[(S1)]
 (i) For every $\imath\in \mathbf{M}$, $\bar{A}(\cd,\imath), \bar{C}(\cd,\imath), \bar{M}(\cd,\imath)\equiv0$.

(ii) There exists a  constant  $k \ges-1 $ such that for all $\imath\in \mathbf{M}$,  $$\bar{B}(\cd,\imath)=kB(\cd,\imath),\q \bar{D}(\cd,\imath)=kD(\cd,\imath), \q   \bar{N}(\cd,\imath)=kN(\cd,\imath).$$

   \item[(S2)]
 (i)  For each  $\imath\in \mathbf{M}$,  the processes  $\bar{S}_1(\cd,\imath)$ and $ \bar{S}_2(\cd,\imath)$ belongs to $L^{\infty}_{\mathbb{F}^0}(t, \infty ;\mathbb{R}^{m \times n})$.

 (ii)  For each  $\imath\in \mathbf{M}$, $kS(\cd,\imath)+(k+1)\bar{S}_1(\cd,\imath)-\bar{S}_2(\cd,\imath)=0$, where   $k$ is the same constant  as  $k$ in $\mathbf{(S1)}$-(ii).

   \item[(S3)]
There exist  constants  $  \Bbbk _x, \Bbbk _{x\nu}  ,\Bbbk _y, \Bbbk _{y\nu}  \in \mathbb{R}$ such that, for all $(s, \omega^0, \imath)\in [0, \infty)\times \Omega^0\times\mathbf{M}$,
$$
\left\{\2n
\ba{ll}
\ns\ds  \Bbbk  _x \ges  \mathop{\esssup}\limits_{(s, \omega^0, \imath) } \lambda_{\max }\big(\sA+\sA^{\top} + \sC^{\top}\sC+\sM^{\top}\sM\big)(s, \omega^0, \imath),\\
\ns\ds \Bbbk _{x\nu}  \ges  \mathop{\esssup}\limits_{(s, \omega^0, \imath) }
 \lambda_{\max }\big(\tilde\sA  +  \tilde\sA   \, ^\top +  (\sC+ \tilde\sC)  ^\top(\sC+ \tilde\sC)    -\sC^\top \sC\\
\ns\ds  \hskip 3.5cm +  (\sM+ \tilde\sM)  ^\top(\sM+ \tilde\sM)   -\sM^\top \sM \big)(s,\omega^0,\imath),\\
\ns\ds \Bbbk_{y} \ges \kappa+\frac{1}{2}\mathop{\esssup}\limits_{(s, \omega^0, \imath)  }
 \lambda_{\max }\big(\sA+\sA^{\top}\big)(s, \omega^0, \imath),\\
\ns\ds  \Bbbk_{y\nu} \ges \frac{1}{2} \mathop{\esssup}\limits_{(s, \omega^0, \imath)  }
 \lambda_{\max }\big(\dbA +\dbA^{\top}\big)(s, \omega^0, \imath).
\ea\right.
$$

   \item[(PD)$_G$]   For all $\imath \in\textbf{M}$,
$$
\ba{ll}
\ns\ds {\rm(i)}\q R (\cdot, \imath) \gg 0,\q
\left(\begin{array}{cc}
\3n Q (\cdot, \imath) & \3n S(\cdot, \imath)^{\top} \3n \\
\3n S(\cdot, \imath) & \3n R(\cdot, \imath)\3n
\end{array}\right) \ges 0, \\
\ns\ds   {\rm(ii)}\q   (R+\bar R) (\cdot, \imath) \gg 0,\q  \left(\begin{array}{cc}
\3n (Q +\bar Q)(\cdot, \imath) & \3n (S+\bar{S}_2) (\cdot, \imath)^{\top}\2n \\
\3n (S+\bar{S}_1)(\cdot, \imath) & \3n (R+\bar R)(\cdot, \imath)\3n
\end{array}\right) \ges 0.
 \ea $$
 \end{description}

Moreover, we put
\bel{kG}\left\{\3n\ba{ll}
 \ns \ds\overline{\Bbbk}:= \min\{-\Bbbk _x-\Bbbk _{x\nu} \mathbf{1}_{\{\Bbbk _{x\nu} >0\}},\k^{\ast}\},\\
\ns\ds
\underline{\Bbbk}:=2 (\Bbbk _{y}+ \Bbbk _{y \nu}\mathbf{1}_{\{\Bbbk _{y \nu}>0\}}+\dbK),\\
\ns\ds \dbK:= \max\limits_{  \imath\in\textbf{M}}\Big\{\Vert \mathscr{Q} (\cdot, \imath)\Vert^{2}
+\Vert \mathscr{C}(\cdot, \imath)\Vert^{2}+\Vert \mathscr{M} (\cdot, \imath)\Vert^{2}
+\max\{\Vert   \tilde{\mathscr{Q}} (\cdot, \imath)\Vert^{2}, \Vert \dbC (\cdot, \imath)\Vert^{2},
\Vert \dbM  (\cdot, \imath)\Vert^{2}\}\Big\}.\!\!\!\!\!\!\!\!

\ea\right.\ee

\begin{theorem}\label{Pro-MFLG}\sl
Assume that $\mathbf{(H1)}$-$\mathbf{(H2)}$, $\mathbf{(S1)}$-$\mathbf{(S3)}$, and $(\mathbf{PD})_G$ hold.
If $\ds \Bbbk= -\frac{\Bbbk _x}{2}+\Bbbk _{y}\in (\underline{\Bbbk },\overline{\Bbbk })$, then  for any $(t, x_t, \iota) \in \sD^2$, system \eqref{equ-MFLG-Hamiltonian} admits   a unique solution $(\bar{X}, \bar{Y}, \bar{Z}, \bar{Z}^0, \bar{K} ) \in L_\dbF^2(t,\i;\dbR^n)\times\cL_{\mathbb{F}}^{2, {\Bbbk}}(t, \infty)$.
Moreover,
%
 the process $\bar u(\cd)$ in \rf{equ-MFLG-u-opt} belongs to $\mathcal{U}_{ad}^{{\Bbbk}}[t, \infty)$ and is the unique mean field Nash equilibrium of Problem (G)$^\Bbbk_S$ at $(t, x_t, \iota)$.

\end{theorem}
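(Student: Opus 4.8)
The plan is to observe that the Hamiltonian system \eqref{equ-MFLG-Hamiltonian} with coefficients \eqref{b-si-f-G} is a linear instance of the general fully coupled FBSDE \eqref{equ-FBSDE-1}, in which the initial coupling $\Phi(y,\upsilon,\imath)\equiv x_t$ is independent of $y$. Thus the well-posedness assertion reduces to verifying Assumptions $\mathbf{(C1)}$, $\mathbf{(C2)}$ and $\mathbf{(C3.1)}$ for these coefficients and then applying Theorem \ref{Th-FBSDE} with $\k=\Bbbk=-\frac{\Bbbk_x}{2}+\Bbbk_y$; since $\Bbbk\in(\underline{\Bbbk},\overline{\Bbbk})$ by hypothesis, the range condition \eqref{equ-FBSDE-3} is met with this choice. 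The decisive preliminary step is to record that $\mathbf{(S1)}$ and $\mathbf{(S2)}$ force the forward mean-field coefficients to coincide with their backward counterparts, namely $\tilde\sA=\dbA$, $\tilde\sC=\dbC$ and $\tilde\sM=\dbM$. Indeed, under $\mathbf{(S1)}$ one has $\bar A=\bar C=\bar M\equiv0$ together with $B+\bar B=(1+k)B$, $D+\bar D=(1+k)D$ and $N+\bar N=(1+k)N$, so each of the three identities collapses to the single relation $(1+k)(S+\bar S_1)=S+\bar S_2$, which is exactly $\mathbf{(S2)}$-(ii) upon rearrangement. This matching restores for \eqref{equ-MFLG-Hamiltonian} the same self-adjoint coupling structure already exploited in the proof of Theorem \ref{Pro-MFLQ}.

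With this in hand, I would check $\mathbf{(C1)}$ directly: the maps $b,\sigma,\widetilde\sigma,f$ in \eqref{b-si-f-G} are affine in $(x,\theta)$ and in the measure argument, with coefficient matrices bounded through $\mathbf{(H1)}$-(i), $\mathbf{(H2)}$-(i), $\mathbf{(S2)}$-(i) and with $R^{-1},(R+\bar R)^{-1}$ bounded by $(\mathbf{PD})_G$, yielding global Lipschitz continuity; the inhomogeneous parts belong to $L^{2,\k^\ast}_\dbF$ by $\mathbf{(H1)}$-(ii) and $\mathbf{(H2)}$-(ii). Conditions $\mathbf{(C2)}$-(i) and $\mathbf{(C2)}$-(ii) then follow verbatim from the definitions of $\Bbbk_x,\Bbbk_{x\nu},\Bbbk_y,\Bbbk_{y\nu}$ in $\mathbf{(S3)}$, the largest-eigenvalue bounds there being exactly what is needed.

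The substantive step is $\mathbf{(C3.1)}$. I would set $\b_2=0$, $\phi\equiv0$, $\psi\equiv0$, take $\b_1=\min\{1/L_1,L_2\}$ with $L_1,L_2$ the game analogues of the constants in the proof of Theorem \ref{Pro-MFLQ}, and choose $\varphi$ of the same quadratic form in $\rho_1=B^\top y+D^\top z+N^\top z^0$ and its $(R+\bar R)$-companion. The domination inequalities $\mathbf{(C3)}$-(ii) are then immediate from the Lipschitz bounds on the $\L_\Phi$ terms, and the $\Phi$-part of $\mathbf{(C3)}$-(iii)-Case~1 holds trivially because $\Phi$ is constant. For the main monotonicity inequality of $\mathbf{(C3)}$-(iii)-Case~1, I would expand the pairing between the forward–backward increment of $(-f,\Gamma)$ and that of $(X,\Theta)$; here the identities $\tilde\sA=\dbA$, $\tilde\sC=\dbC$, $\tilde\sM=\dbM$ cause the cross terms linking the forward mean-field coefficients to the backward ones to cancel, after which the remaining diagonal contributions are absorbed by $\bigl(-\frac{\Bbbk_x}{2}+\Bbbk_y\bigr)\,\dbE[\langle X-\bar X,Y-\bar Y\rangle]$ and by $-\b_1\,\dbE[\varphi]$ on the right-hand side, exactly as in the control case. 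This is the main obstacle: it is only this algebraic collapse, forced by $\mathbf{(S1)}$–$\mathbf{(S2)}$, that recovers the monotonicity required for $\mathbf{(C3.1)}$, everything else being routine linear bookkeeping. Theorem \ref{Th-FBSDE} then delivers the unique solution $(\bar X,\bar Y,\bar Z,\bar Z^0,\bar K)$ in the stated spaces.

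Finally, for the equilibrium assertion I would appeal to the fixed-point reformulation set up before the theorem: solving \eqref{equ-MFLG-Hamiltonian} together with \eqref{equ-MFLG-u-opt} is equivalent to producing a fixed point of $\cT=\cT_1\circ\cT_2$. Lemma \ref{Le-MFLQG-1} shows that, for the frozen pair $(\BX,\Bu)=(\dbE^0_s[\bar X_s],\dbE^0_s[\bar u_s])$, the control $\bar u$ of \eqref{G-opt-u} (equivalently \eqref{equ-MFLG-u-opt} under $(\mathbf{PD})_G$) solves Problem (G1)$_S^{\Bbbk}$, i.e. the optimality \eqref{equ-MFLQG-u1} holds, while the consistency constraint \eqref{equ-MFLQG-u2} is precisely the identity $\cT_2(\bar X,\bar u)=(\BX,\Bu)$. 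Hence $\bar u\in\mathcal{U}_{ad}^{\Bbbk}[t,\infty)$ is a mean field Nash equilibrium of Problem (G)$_S^{\Bbbk}$, and its uniqueness is inherited from the uniqueness of the solution of \eqref{equ-MFLG-Hamiltonian}.
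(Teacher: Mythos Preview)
Your proposal is correct and follows essentially the same route as the paper: recognize \eqref{equ-MFLG-Hamiltonian} as a linear instance of \eqref{equ-FBSDE-1}, verify $\mathbf{(C1)}$--$\mathbf{(C3.1)}$ with $\beta_2=0$, $\phi\equiv\psi\equiv0$ and a quadratic $\varphi$ built from $\rho_1=B^\top y+D^\top z+N^\top z^0$, then invoke Theorem~\ref{Th-FBSDE} and the fixed-point reduction via Lemma~\ref{Le-MFLQG-1}. Your key algebraic observation that $\mathbf{(S1)}$--$\mathbf{(S2)}$ force $\tilde\sA=\dbA$, $\tilde\sC=\dbC$, $\tilde\sM=\dbM$ is precisely the mechanism behind the cross-term cancellation in the monotonicity check; the paper records this step slightly differently (writing $\tilde\sA=\tilde\sC=\tilde\sM=0$) and uses the simpler choice $\varphi(s,\theta,\bar\theta,\nu,\bar\nu,\imath)=|\rho_1(s,\theta-\bar\theta,\imath)|^2$, but the structure of the argument is identical.
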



\begin{proof}
Observe that \rf{equ-MFLG-Hamiltonian} is   a linear case of the general  system \rf{equ-FBSDE-1}. Hence, it suffices to verify that the assumptions of Theorem \ref{Th-FBSDE}  are satisfied. To this end, for $(s,\th,\bar\th,\nu,\bar \nu, \imath )\in  [0,\infty)\times\mathscr{R}\times\mathscr{R}\times\mathscr{P}_2(\mathscr{R}\times\mathscr{R})\times \textbf{M}$
and $  \upsilon,\bar \upsilon \in  \mathscr{P}_2(\mathbb{R}^n) $, define 
\begin{equation}\label{equ-MFLQ-b1b2}
\begin{aligned}
&\b_1= \min\Big\{ \frac1{L_1}, L_2\Big\},\q \b_2=0,\q
\phi(y, \bar{y}, \upsilon, \bar{\upsilon}, \imath)\equiv0, \q
\psi(s,\th,\bar\th,\nu,\bar \nu, \imath ) \equiv0,\\
& \varphi(s,\th,\bar\th,\nu,\bar \nu, \imath ) :=\big|\rho_1(s,\th-\bar \th,\imath)   \big|^2 .
\end{aligned}
\end{equation}
  where
$$
\begin{aligned}
&\rho_1(s,\th,\imath):=B^\top y+D^\top z+N ^\top z^0, \\
 %
&L_{1}:=2\max\Big\{ \max \limits_{ \imath\in\textbf M }\|B R^{-1}(\cdot, \imath) \|^2,\ \max \limits_{ \imath\in\textbf M } \|(B+\bar B) (R+\bar R)^{-1} (\cdot, \imath)\|^2,\ \max \limits_{ \imath\in\textbf M }\|D R^{-1}(\cdot, \imath) \|^2, \\
&\hskip 1.9cm \max \limits_{ \imath\in\textbf M }\|(D+\bar D) (R+\bar R)^{-1}(\cdot, \imath) \|^2,\ \max \limits_{ \imath\in\textbf M }\|N R^{-1}(\cdot, \imath) \|^2, \ \max \limits_{ \imath\in\textbf M }\|(N+\bar N) (R+\bar R)^{-1}(\cdot, \imath) \|^2\Big\},\\
&L_{2}:= \min\Big\{\mathop{\essinf}\limits_{(s, \omega^{0}, \imath)}\lambda_{\min } R\,^{-1} (s, \imath ) ,(k+1)
\mathop{\essinf}\limits_{(s, \omega^{0}, \imath)}\lambda_{\min } (R+\bar R)^{-1}(s,\imath)\Big\}{\bf I}_{\{k>-1\}}\\
 &\qq\q  +4\mathop{\essinf}\limits_{(s, \omega^{0}, \imath)}\lambda_{\min } R\,^{-1} (s, \imath ) {\bf I}_{\{k=-1\}}.
\end{aligned}
$$

We now verify that the coefficients  $b$, $\si$, $\wt\si$ and $f$   given in \eqref{b-si-f-G}   satisfy  $\mathbf{(C3.1)}$.
First, consider {\textbf{\rm(C3)-(ii)}} for $b$:
%
%
$$
\ba{ll}
\mathbb{E}\left[\big|b(s, X,\Theta, \mathcal{L}^1_{(X, \Th )},\alpha_s)
 -b(s, X, \bar{\Th},   \mathcal{L}^1_{(X, \bar{\Th} )},\alpha_s)\big|^{2}\right]\\
 %
\ns\ds \les L_1 \mathbb{E}\[\big|B^{\top}(\widehat{Y}-\mathbb{E}^0_s[\widehat{Y}])
+D^{\top}(\widehat{Z}-\mathbb{E}^0_s[\widehat{Z}])+N^{\top}(\widehat{Z}^0-\mathbb{E}^0_s[ \widehat{Z}^0])\big|^{2}
+\big|B^{\top}\mathbb{E}^0_s[\widehat{Y}]+D^{\top}\mathbb{E}^0_s[\widehat{Z}]
+N^{\top}\mathbb{E}^0_s[\widehat{Z}^0]\big|^{2}\] \\
 %
\ns\ds = L_1
\mathbb{E}^0\[\mathbb{E}^0_s\big[\big|B^{\top}\widehat{Y}+D^{\top}\widehat{Z}+N^{\top}\widehat{Z}^0\big|^{2}\big]\]  = L_1
\mathbb{E}\[\varphi(s, \Theta, \bar{\Theta}, \mathcal{L}^1_{\Theta}, \mathcal{L}^1_{\bar{\Theta}},\alpha_s)\].
\ea$$
A similar argument applies to   $\si$, $\tilde\si$ and $f$.
 
 \ms
Next,  we verify {\textbf{\rm(C3)-(iii)-Case 1}}. 
Under   (S1), (S2) and (PD)$_G$, we have    $\tilde\sA=\tilde\sC=\tilde\sM=0$, and 
$$Q-S^{\top}R^{-1}S\ges0, \q Q+\bar{Q}-(S+\bar{S}_2)^{\top}(R+\bar{R})^{-1}(S+\bar{S}_1)\ges0.$$ 
A direct computation then shows that 
$$
\ba{ll}
\!\mathbb{E}\left[ \left\langle\!\!  \Bigg(\begin{array}{ccc}
  \!\! -f(s, X,\Theta, \mathcal{L}^1_{(X, \Th )},\alpha_s)+f(s, \widehat{X}, \bar{\Th},   \mathcal{L}^1_{(\widehat{X}, \bar{\Th} )},\alpha_s)\!\! \\
  \!\! \Gamma(s, X,\Theta, \mathcal{L}^1_{(X, \Th )},\alpha_s) -\G(s, \widehat{X}, \bar{\Th},   \mathcal{L}^1_{(\widehat{X}, \bar{\Th} )},\alpha_s)\!\!\!
  \end{array} \Bigg) 
 , \Bigg( \begin{array}{ccc}
 \!\!\!X-\bar{X} \!\! \!\\
 \!\!\!\Th-\bar{\Th}  \!\!\!   \\
   \end{array} \Bigg)\!\!
  \right\rangle\right]\\
\ns\ds+\(-\frac{\k_x}{2}+\k_{y}\) \mathbb{E}\left[ \langle X-\bar{X}, Y-\bar{Y} \rangle \right]\\
\ns\ds= \mathbb{E}^0\bigg[\mathbb{E}^0_s\[  -\big\langle (Q-S^{\top}R^{-1}S)(\widehat{X}-\mathbb{E}^0_s[\widehat{X}])
+[Q+\bar{Q}-(S+\bar{S}_2)^{\top}(R+\bar{R})^{-1}(S+\bar{S}_1)]\mathbb{E}^0_s[\widehat{X}],\widehat{X} \big\rangle\\
\ns\ds\qq +\big\langle \tilde\sA\mathbb{E}^0_s[\widehat{X}], \widehat{Y}\big\rangle  +\big\langle\tilde\sC\mathbb{E}^0_s[\widehat{X}], \widehat{Z}\big\rangle +\big\langle\tilde\sM\mathbb{E}^0_s[\widehat{X}], \widehat{Z}^0\big\rangle\\
\ns\ds\qq -\big\langle R^{-1}\big(B^{\top}\widehat{Y}+D^{\top}\widehat{Z}+N^{\top}\widehat{Z}^0\big),
B^{\top}\widehat{Y}+D^{\top}\widehat{Z}+N^{\top}\widehat{Z}^0\big\rangle\\
\ns\ds\qq +\big\langle R^{-1}\big(B^{\top}\mathbb{E}^0_s[\widehat{Y}]+D^{\top}\mathbb{E}^0_s[\widehat{Z}]
+N^{\top}\mathbb{E}^0_s[ \widehat{Z}^0]\big), B^{\top}\mathbb{E}^0_s[\widehat{Y}]+D^{\top}\mathbb{E}^0_s[\widehat{Z}]
+N^{\top}\mathbb{E}^0_s[ \widehat{Z}^0]\big\rangle\\
\ns\ds\qq -\big\langle (R+\bar{R})^{-1}\big(B^{\top}\mathbb{E}^0_s[\widehat{Y}]
+D^{\top}\mathbb{E}^0_s[\widehat{Z}]+N^{\top}\mathbb{E}^0_s[ \widehat{Z}^0]\big),\\
\ns\ds\qq\qq\qq\qq \qq\qq 
(B+ \bar{B} )^{\top}\mathbb{E}^0_s[\widehat{Y}]
+(D+ \bar{D} )^{\top}\mathbb{E}^0_s[\widehat{Z}]+(N+ \bar{N})^{\top}\mathbb{E}^0_s[ \widehat{Z}^0]\big\rangle \] \bigg]\\
 \ns\ds\les  \mathbb{E}^0\bigg[\mathbb{E}^0_s\[  -\big\langle R^{-1}\big(B^{\top}\widehat{Y}+D^{\top}\widehat{Z}+N^{\top}\widehat{Z}^0\big),
B^{\top}\widehat{Y}+D^{\top}\widehat{Z}+N^{\top}\widehat{Z}^0\big\rangle\\
\ns\ds\qq +\big\langle R^{-1}\big(B^{\top}\mathbb{E}^0_s[\widehat{Y}]+D^{\top}\mathbb{E}^0_s[\widehat{Z}]
+N^{\top}\mathbb{E}^0_s[ \widehat{Z}^0]\big), B^{\top}\mathbb{E}^0_s[\widehat{Y}]+D^{\top}\mathbb{E}^0_s[\widehat{Z}]
+N^{\top}\mathbb{E}^0_s[ \widehat{Z}^0]\big\rangle\\
\ns\ds\qq -(k+1)\big\langle (R+\bar{R})^{-1}\big(B^{\top}\mathbb{E}^0_s[\widehat{Y}]
+D^{\top}\mathbb{E}^0_s[\widehat{Z}]+N^{\top}\mathbb{E}^0_s[ \widehat{Z}^0]\big), 
 B ^{\top}\mathbb{E}^0_s[\widehat{Y}]
+D^{\top}\mathbb{E}^0_s[\widehat{Z}]+N^{\top}\mathbb{E}^0_s[ \widehat{Z}^0]\big\rangle \] \bigg]\\

 \ns\ds= \mathbb{E}^0\bigg[\mathbb{E}^0_s\[ - \big\langle R^{-1}\big(B^{\top}(\widehat{Y}-\mathbb{E}^0_s[\widehat{Y}])+D^{\top}(\widehat{Z}-\mathbb{E}^0_s[\widehat{Z}])
+N^{\top}(\widehat{Z}^0-\mathbb{E}^0_s[ \widehat{Z}^0])\big),\\
\ns\ds\qq\qq\qq\qq\qq\qq
B^{\top}(\widehat{Y}-\mathbb{E}^0_s[\widehat{Y}])+D^{\top}(\widehat{Z}-\mathbb{E}^0_s[\widehat{Z}]) +N^{\top}(\widehat{Z}^0-\mathbb{E}^0_s[ \widehat{Z}^0]) \big\rangle\\
\ns\ds\qq -(k+1)\big\langle (R+\bar{R})^{-1}\big(B^{\top}\mathbb{E}^0_s[\widehat{Y}]
+D^{\top}\mathbb{E}^0_s[\widehat{Z}]+N^{\top}\mathbb{E}^0_s[ \widehat{Z}^0]\big), 
 B ^{\top}\mathbb{E}^0_s[\widehat{Y}]
+D^{\top}\mathbb{E}^0_s[\widehat{Z}]+N^{\top}\mathbb{E}^0_s[ \widehat{Z}^0]\big\rangle \] \bigg]\\
\ns\ds\les -L_2\mathbb{E}^0\bigg[\mathbb{E}^0_s\[\big| B^{\top}(\widehat{Y}-\mathbb{E}^0_s[\widehat{Y}])+D^{\top}(\widehat{Z}-\mathbb{E}^0_s[\widehat{Z}])
+N^{\top}(\widehat{Z}^0-\mathbb{E}^0_s[ \widehat{Z}^0])\big|^2\\
\ns\ds\qq\qq\qq\qq  +\big|
B^{\top}\mathbb{E}^0_s[\widehat{Y}]+D^{\top}\mathbb{E}^0_s[\widehat{Z}]+N^{\top}\mathbb{E}^0_s[ \widehat{Z}^0]\big|^2 \] \bigg]\\
\ns\ds= -L_2\mathbb{E}\big[\big|B^{\top}\widehat{Y}+D^{\top}\widehat{Z}+N^{\top}\widehat{Z}^0 \big|^2\big]   = -L_2\mathbb{E}\big[\varphi(s, \Theta, \bar{\Theta}, \mathcal{L}^1_{\Theta}, \mathcal{L}^1_{\bar{\Theta}},\alpha_s)\big].
\ea$$
Then, when  $\ds \Bbbk= -\frac{\Bbbk _x}{2}+\Bbbk _{y}\in (\underline{\Bbbk },\overline{\Bbbk })$,  the unique solvability of the McKean-Vlasov FBSDEs \rf{equ-MFLG-Hamiltonian}  follows directly from Theorem  \ref{Th-FBSDE}.
As a consequence, we obtain the unique mean field Nash equilibrium  $\bar{u}_\cd\in \mathcal{U}_{ad}^{{\k}}[t, \infty)$ of Problem (G)$^\k_S$   at $(t, x_t, \iota)$.
%
\end{proof}

\br{}\sl   It is worth noting that our conditions (S1) and (S2) encompass the case where
$$\bar{B}(\cdot, \imath), \bar{D}(\cdot, \imath), \bar{N}(\cdot, \imath)=0,\q  \bar{S}_1(\cdot, \imath)=\bar{S}_2(\cdot, \imath)$$  
for each $\imath \in \mathbf{M},$
a setting previously studied in  \cite{Ahuja-Ren-Yang-2019, Graber-2016}, among others. 
\er


\begin{thebibliography}{99}

\bibitem{Ahuja-Ren-Yang-2019} Ahuja, S., Ren, W., Yang, T., \it Forward-backward stochastic differential equations with monotone functionals and mean field games with common noise. \sl Stoch. Proc. Appl., \rm 129, 3859-3892 (2019).
%
%
%

\bibitem{Bayraktar-Zhang-2023} Bayraktar, E., Zhang X., \it Solvability of infinite horizon Mckean-Vlasov FBSDEs in mean-field control problems and games. \sl Appl. Math. Optim., \rm 87: 13, (2023).  


%
%
%

\bibitem{Bensoussan-Frehse-Yam-2013} Bensoussan, A., Frehse, J., Yam, P., \it Mean field games and mean field type control theory. \sl Springer, New York \rm (2013).




\bibitem{Bensoussan-Yam-Zhang-2015} Bensoussan, A., Yam, S., Zhang, Z., \it Well-posedness of mean field type forward-backward stochastic differential equations. \sl Stoch. Proc. Appl., \rm 125, 3327-3354 (2015).


\bibitem{BLPR-2017} Buckdahn, R., Li, J., Peng, S., Rainer, C., \it Mean-feld stochastic differential equations and associated PDEs. \sl Ann. Probab., \rm 45(2), 824-878 (2017).


\bibitem{Cardaliaguet-Graber-2015} Cardaliaguet, P., Graber, P., \it Mean field games systems of first order. \sl ESAIM: COCV \rm  21(3), 690–722 (2015).
    
\bibitem{CD-2013} Carmona, R., Delarue, F., \it Mean field forward-backward stochastic differential equations. \sl  Electron. Commun. Probab., \rm 18(68), 1 (2013).

\bibitem{Carmona-Delarue-Lachapelle-2013} Carmona, R., Delarue, F., Lachapelle, A., \it Control of McKean–Vlasov dynamics versus mean field games. Math. Financ. Econ., \rm 7(2), 131–166 (2013).

\bibitem{CD-2015} Carmona, R., Delarue. F., \it Forward-backward stochastic differential equations and controlled McKean-Vlasov dynamics. \rm  Ann. Probab., \rm 43(5), 2647-2700 (2015).

\bibitem{CD-2018-1} Carmona, R., Delarue, F., \it Probabilistic theory of mean field games with applications I:  Mean Field FBSDEs, Control, and Games. \sl Probab. Theory Stoch. Model. 83, Springer, Cham, Switzerland, \rm (2018).

\bibitem{CD-2018-2} Carmona, R., Delarue, F., \it Probabilistic theory of mean field games with applications II: Mean Field Games with Common Noise and Master Equations. \sl Probab. Theory Stoch. Model. 84, Springer, Cham, Switzerland, \rm (2018).
%


\bibitem{2012-DH} Donnelly, C., Heunis, A.,  \it Quadratic risk minimization in a regime-switching model with portfolio constraints. \sl SIAM J. Control Optim., \rm   50(4), 2431-2461 (2012).


\bibitem{Gomes-Saude-2013}   D. Gomes, J. Saúde, Mean field games models—a brief survey, Dynam. Games Appl. (2013) 1–45 


\bibitem{Graber-2016} Graber, P., \it Linear quadratic mean field type control and mean field games with common noise, with application to production of an exhaustible resource. \sl Appl. Math. Optim., \rm 74(3), 459-486 (2016).

\bibitem{Gueant-2011}Gu\'{e}ant, O., Lasry, J., Lions, P., \it Mean field games and applications.  \sl In Paris-Princeton Lectures on Mathematical Finance 2010, Lecture Notes in Math. 2003, Springer, Berlin, \rm 205-266 (2011).



\bibitem{Hu-Peng-1995} Hu, Y., Peng, S., \it Solution of forward-backward stochastic differential equations. \sl Probab. Theory Relat. Fields., \rm 103(2), 273-283 (1995).

\bibitem{Hua-Luo} Hua, T., Luo, P., \it Infinite horizon McKean-Vlasov FBSDEs and applications to mean field control problems. \sl arXiv: \rm 2403.14396.


\bibitem{HLY-2012} Huang, J., Li, X., Yong, J., \it A Linear-quadratic optimal control problem for
mean-field stochastic differential equations in infinite horizon. \sl Math. Control Relat. Fields., \rm 5(1), 97-139 (2015).



\bibitem{HMC-2006} Huang, M., Malhamé, R.P., Caines, P.E., \it Large population stochastic dynamic games: closed-loop
McKean–Vlasov systems and the nash certainty equivalence principle.\sl Commun. Inf. Syst. \rm 6(3), 221–
252 (2006).



\bibitem{HT-2022} Huang, Z.,Tang S., \it A Mean Field Games with Common Noises and Conditional
Distribution Dependent FBSDEs. \sl Chin. Ann. Math. Ser. B, \rm 43(4),  523–548 (2022).




\bibitem{Lasry-Lions-2007} Lasry, J., Lions, P., \it Mean field games. \sl Jpn. J. Math.
\rm  2(1), 229–260 (2007).


\bibitem{Mao-Yuan-2006} Mao, X., Yuan, C., \it Stochastic differential equations with Markovian switching. \sl Imperial College Press, London, \rm (2006).

\bibitem{MWY-2021} Mei, H., Wei, Q., Yong, J., \it Optimal ergodic control of linear stochastic differential equations with quadratic cost functional having indefinite weights. \sl SIAM J. Control Optim., \rm 59(1), 584-613.


%
%

\bibitem{Peng-Shi-2000} Peng, S., Shi, Y., \it Infinite horizon forward-backward stochastic differential equations. \sl Stoch. Process. Appl., \rm 85 75-92 (2000).

\bibitem{Peng-Wu-1999} Peng, S., Wu, Z., \it Fully coupled forward-backward stochastic differential equations and applications to optimal control. \sl SIAM J. Control Optim., \rm 37(3), 825-843 (1999).


\bibitem{2006-RW} Rogers, L.,  Williams, D., \it  Diffusions, markov processes and martingales Volume 2, 2nd ed., \sl Cambridge University Press, Cambridge, UK,  \rm (2006).

\bibitem{2024-GNY}  Rol\'{o}n Guti\'{e}rrez, E., Nguyen, S.,  Yin, G., \it  Markovian-switching systems: backward and forward-backward stochastic differential equations, mean-field interactions, and nonzero-sum differential games. \sl Appl. Math. Optim., \rm  89(2), 33  (2024).

\bibitem{Shi-Zhao-2020} Shi, Y., Zhao, H., \it Forward-backward stochastic differential equations on infinite horizon and quasilinear elliptic PDEs. \sl J. Math. Anal. Appl., \rm 485, 123791 (2020).

\bibitem{Tian-Yu} Tian R., Yu, Z., \it  Mean-field type FBSDEs under domination-monotonicity conditions and application to LQ problems. \sl  SIAM J. Control Optim., \rm  1, 22-46 (2023).


\bibitem{Wei-Xu-Yu-2023} Wei, Q., Xu, Y., Yu, Z., \it Infinite horizon mean-field linear quadratic optimal control problems with jumps and the related Hamiltonian systems. \sl Appl. Math. Optim., \rm  90(4) (2024).

\bibitem{WY-2021} Wei, Q., Yu, Z., \it Infinite horizon forward-backward SDEs and open-loop optimal controls for stochastic linear-Quadratic problems with random coefficients. \sl  SIAM J. Control Optim., \rm  59(4), 2594-2623 (2021).


\bibitem{Yu-2022} Yu, Z., \it On forward-backward stochastic differential equations in a domination-monotonicity framework. \sl Appl. Math. Optim., \rm 85(5), 1-46 (2022).

\bibitem{Yong-2013} Yong, J., \it A linear-quadratic optimal control problem for mean-field stochastic differential equations. \sl SIAM J. Control. Optim., \rm 51, 2809-2838 (2013).

%

\end{thebibliography}
\end{document}